\documentclass[11pt]{article}

\usepackage{star}

\title{Decentralized Online Riemannian Optimization with Dynamic Environments}
\author{Hengchao Chen\thanks{University of Toronto 
  ({hengchao.chen@mail.utoronto.ca}).} \and Qiang Sun\thanks{Corresponding author. University of Toronto and  MBZUAI ({qsunstats@gmail.com}).}}
\date{}

\begin{document}

\maketitle

\begin{abstract}
This paper develops the first decentralized online Riemannian optimization algorithm on Hadamard manifolds. Our algorithm, the decentralized projected Riemannian gradient descent, iteratively performs local updates using  projected Riemannian gradient descent  and a consensus step via weighted \Frechet mean.  Theoretically, we establish linear variance reduction for the consensus step. Building on this, we prove a dynamic regret bound of order $\cO(\sqrt{T(1+P_T)}/\sqrt{(1-\sigma_2(W))})$, where $T$ is the time horizon, $P_T$ represents  the path variation measuring nonstationarity, and $\sigma_2(W)$ measures the network connectivity. The weighted \Frechet mean in our algorithm incurs a minimization problem, which can be computationally expensive. To further alleviate this  cost, we propose a simplified consensus step with a closed-form, replacing the weighted \Frechet mean. We then establish linear variance reduction for this alternative and prove  that the decentralized algorithm, even with this  simple consensus step,  achieves the same dynamic regret bound. 
Finally, we validate our approach with experiments on nonstationary decentralized \Frechet mean computation over hyperbolic spaces and the space of symmetric positive definite matrices, demonstrating the effectiveness of our methods.

{\bf Keywords:}
Distributed optimization, dynamic regret, Hadamard manifold, multi-agent systems, online learning, Riemannian optimization, time-varying optimization. 


\end{abstract}





\section{Introduction}\label{sec:introduction}

Distributed optimization has gained widespread attention from researchers across various  fields in science and engineering \cite{yang2019survey}. Its applications span numerous domains, including  power systems \cite{wang2017distributed,molzahn2017survey}, sensor networks \cite{rabbat2004distributed}, and smart manufacturing \cite{kusiak2018smart}, among others. The objective  of distributed optimization is to minimize a global  function $f$, which is the sum of local functions, each accessible to individual agents within a network. Each  agent can either update its decision variable using local information or exchange information with other agents through the network. This decentralized approach benefits from  reduced  computational burden, lower  memory requirements, improved privacy, and greater resilience to node failures compared to centralized systems.

The core components of these algorithms are local updates and consensus steps. Local updates allow agents to make decisions based on their own information, while consensus steps ensure that agents reach agreement on their local estimates, facilitating collective optimization of a global function. 
For Euclidean cases, this can be achieved  by the weighted average:
\#\label{equ:1}
x_{i,t+1}=\sum_{j=1}^nw_{ij}x_{j,t},\quad i=1,\ldots,n,
\#
where $x_{i,t}$ denotes the state of agent $i$ at time $t\geq 1$
, $w_{ij}$ represents  the weight, and $w_{ij}>0$ indicates a connection between agents $i$ and $j$. The matrix $W=(w_{ij})\in\RR^{n\times n}$ is assumed to be symmetric and doubly stochastic, satisfying  $\sum_iw_{ij}=\sum_{j}w_{ij}=1$ and $w_{ij}\geq 0$. Under these conditions, the agents' states converge linearly to a common state {as $t$ approaches infinity} if $\sigma_2(W)<1$,
where $\sigma_2(W)$ denotes the second largest singular value of $W$. Building on this consensus mechanism, various decentralized optimization algorithms have been developed to minimize global functions, including decentralized subgradient methods \cite{chen2021distributed,tsitsiklis1986distributed,yuan2016convergence} and decentralized gradient tracking methods \cite{qu2017harnessing,nedic2017achieving,shi2015extra}.

The decentralized algorithms mentioned above rely heavily on Euclidean assumptions. However, many real-world problems involve optimizing functions on Riemannian manifolds, such as symmetric positive definite matrices \cite{sra2015conic} and hyperbolic spaces \cite{nickel2017poincare,krioukov2010hyperbolic}. The inherent non-Euclidean structures of these manifolds pose significant challenges for both Riemannian optimization and  decentralized optimization.
Notably, the consensus step \eqref{equ:1} is not directly applicable due to the lack of vector space structures on manifolds.
Additionally, when local functions are defined on manifolds,  optimization algorithms must incorporate geometric structures, adding further complexity to the theoretical analysis.
This paper aims to address these challenges by developing the first decentralized Riemannian optimization algorithm over Hadamard manifolds, backed by strong theoretical guarantees.


In this paper, we extend geodesically convex optimization over Hadamard manifolds \cite{zhang2016first} to a decentralized setting. While several works have explored decentralized Riemannian optimization \cite{chen2023local,chen2024decentralized,hu2023achieving,deng2023decentralized,wang2022decentralized,sun2024global,chen2021decentralized,deng2023decentralizeddou,hu2023decentralized,wu2023decentralized}, they primarily focus on non-geodesically convex optimization on compact manifolds such as Stiefel manifolds. Therefore, these works can only establish convergence to stationary points. In sharp contrast, our work addresses Hadamard manifolds, including hyperbolic spaces and the space of symmetric positive definite matrices as examples \cite{chen2024riemannian}. Assuming the objective functions are geodesically convex, we provide a dynamic regret analysis in analogy to online convex optimization in  Euclidean spaces \cite{cesa2006prediction,besbes2015non}.  In particular, we can select the comparator as the global minimizer, rather than a stationary point, of the objective function.

More precisely, we consider a setting with $n$ agents, where each agent receives a sequence of geodesically convex functions $f_{i,t}$ defined on a Hadamard manifold $\cM$ in an online manner. The goal is to minimize the global function 
\#\label{equ:global-function}
f_t(\cdot)=\frac{1}{n}\sum_{i=1}^nf_{i,t}(\cdot).
\#
Similar to decentralized online convex optimization \cite{shahrampour2017distributed}, we  maintain the decision variables $\{x_{i,t}\}$ for all agents to minimize regret. In stationary cases, the objective is to minimize the static regret with respect to the best decision in hindsight:
\$
\textnormal{Regret}(\{x_{i,t}\})=\frac{1}{n}\sum_{t=1}^T\sum_{i=1}^nf_t(x_{i,t})-\min_x\sum_{t=1}^Tf_t(x).
\$
In dynamic environments, where the optimal decision varies over time \cite{lu2018learning,besbes2015non,zhang2018adaptive,jadbabaie2015online,yang2016tracking,zhao2020dynamic}, a more appropriate criterion is dynamic regret, defined as
\#\label{equ:1.2}
\textnormal{D-Regret}(\{x_{i,t}\})=\frac{1}{n}\sum_{t=1}^T\sum_{i=1}^nf_t(x_{i,t})-\sum_{t=1}^Tf_t(u_t),
\#
where $\{u_t\}_{t=1}^T$ are the time-varying comparators. To minimize dynamic regret, we propose a  decentralized projected Riemannian gradient descent algorithm.
This method alternatively performs local updates using projected Riemannian gradient descent and a  Riemannian consensus step. In the consensus step, we use weighted \Frechet mean to replace the Euclidean consensus step \eqref{equ:1}.
Theoretically, we show that this  consensus step achieves linear variance contraction. Building on this,  we derive a dynamic regret bound of order  $\cO(\sqrt{T(1+P_T)}/\sqrt{1-\sigma_2(W)})$, where $T$ is the time horizon, $\sigma_2(W)$ is the second largest singular value of $W$, measuring network connectivity, $P_T=\sum_{t}d(u_{t+1},u_t)$ 
measures the path variation, and $d(\cdot,\cdot)$ denotes the geodesic distance.  This  bound exactly matches  the Euclidean decentralized case \cite{shahrampour2017distributed}. In the special cases where $P_T=0$ or $\sigma_2(W)$, our results correspond to stationary or centralized scenarios, highlighting the generality of our {results}.






One limitation of the above base algorithm  is that the consensus step requires solving a minimization  problem to compute the weighted \Frechet mean. To improve computational efficiency, we propose to use a single-step Riemannian gradient descent step  as the consensus step. Perhaps surprisingly, we establish  linear variance contraction for this simple consensus step and establish the same dynamic regret bound for the decentralized algorithm.

Finally, we conduct numerical experiments on nonstationary  decentralized \Frechet mean computation over two types of Hadamard manifolds: hyperbolic spaces and the space of symmetric positive definite matrices. The results strongly support the effectiveness of proposed method, and the code is available at \href{https://github.com/statsle/DPRGD}{https://github.com/statsle/DPRGD}.

\subsection{Related Literature}

This work lies at the intersection of three key research areas: Riemannian optimization, decentralized optimization, and online optimization in dynamic environments. In this section, we provide a brief review of the literature in these fields. 

\paragraph{Riemannian Optimization}

Riemannian optimization provides a flexible framework for optimizing a function defined on non-Euclidean spaces, such as symmetric positive definite matrices, hyperbolic spaces,  homogeneous spaces, and stratified spaces \cite{chen2024riemannian,absil2008optimization,hu2020brief,chen2024quotient}. Leveraging various geometric tools, this framework can efficiently address complex problems in various domain such as computer vision and robotics. Similar to Euclidean optimization, Riemannian optimization can be divided into two main categories: geodesically convex optimization and non-geodesically convex optimization.

In the geodesically convex regime, \cite{zhang2016first} established the first global convergence rate of several first-order methods,  including Riemannian gradient descent. Their result applies to Hadamard manifolds, where many problems,  such as  \Frechet mean computation,  are geodesically convex \cite{karcher1977riemannian,afsari2011riemannian,bacak2014convex}. Building on this, \cite{liu2017accelerated} extended Nesterov’s accelerated methods to Riemannian manifolds, developing accelerated first-order methods for geodesically convex optimization. Subsequent works \cite{zhang2018estimate,ahn2020nesterov,alimisis2021momentum,jin2022understanding,kim2022accelerated,martinez2023accelerated,han2023riemannian} have expanded on these methods, improving computational efficiency and relaxing required assumptions.

In contrast, the non-geodesically convex regime deals with general Riemannian manifolds and objective functions. Many first- and second-order optimization methods have been developed for these settings \cite{boumal2023introduction,absil2008optimization,boumal2019global,chen2020proximal,li2021weakly,bonnabel2013stochastic}, finding practical applications in tasks such as robust subspace recovery and orthogonal dictionary learning. However, due to non-convexity, solution iterates are only guaranteed to converge to  stationary points rather than an optimal solution. This limitation makes regret analysis, particularly in nonstationary environments, more challenging. This paper focuses exclusively on geodesically convex optimization.


\paragraph{Decentralized Optimization}

Research on decentralized optimization in Euclidean spaces is well-developed \cite{nedic2018network}, with notable methods including decentralized subgradient methods \cite{chen2021distributed,tsitsiklis1986distributed,yuan2016convergence}, primal-dual approaches \cite{duchi2011dual,alghunaim2020decentralized,xiao2010dual}, gradient tracking algorithms \cite{qu2017harnessing,nedic2017achieving,shi2015extra}, and augmented distributed methods \cite{aybat2017distributed,chang2014multi,xu2015augmented}. The core components of these algorithms are local updates and consensus steps. 
Extending these concepts to nonlinear spaces is nontrivial  due to the absence of vector space structures and the need to incorporate geometric structures specific to the underlying manifold. 

Early works on consensus for Riemannian manifolds \cite{tron2012riemannian,sarlette2009consensus,markdahl2017almost,markdahl2020synchronization,markdahl2020high} primarily focus on establishing asymptotic convergence of agents' states to consensus.  Building on these works, \cite{mishra2019riemannian} develops a decentralized optimization algorithm on the Grassmann manifold, which guarantees asymptotic convergence but does not provide explicit convergence rate guarantees. To address this gap, recent studies \cite{chen2023local,chen2024decentralized,hu2023achieving,deng2023decentralized} establish local linear convergence rates for various consensus algorithms on compact submanifolds, such as the Stiefel manifold. These advancements have led to the development of new decentralized optimization methods, including decentralized Riemannian gradient descent and gradient tracking \cite{chen2021decentralized}, augmented Lagrangian methods \cite{wang2022decentralized}, decentralized projected gradient methods \cite{deng2023decentralized}, decentralized Douglas-Rachford splitting methods \cite{deng2023decentralizeddou}, decentralized Riemannian natural gradient descent \cite{hu2023decentralized}, decentralized Riemannian conjugate gradient methods \cite{chen2024decentralized}, and retraction-free methods \cite{sun2024global}. However, these works focus on nonconvex optimization, resulting in convergence to stationary points rather than global optima.

To our knowledge, decentralized geodesically convex optimization over Hadamard manifolds remains an unexplored area. Recent studies on federated learning over general Riemannian manifolds \cite{li2023federated,huang2024federated} rely on a central server, which deviates from a fully decentralized framework.

\paragraph{Online Optimization in Dynamic Environments}

Online learning considers a sequence of functions $\{f_t\}$  and updates the decision  $x_t$  to minimize regret \cite{cesa2006prediction}:
\$
\textnormal{Regret}(\{x_t\})=\sum_{t=1}^Tf_t(x_t)-\min_x\sum_{t=1}^Tf_t(x).
\$
The above regret assumes that the comparator remains static over time, which is often inadequate in dynamic environments. To handle nonstationarity, many works have examined the following dynamic regret \cite{besbes2015non,zhang2018adaptive,jadbabaie2015online,yang2016tracking,zhao2020dynamic}:
\$
\textnormal{D-Regret}(\{x_t\})=\sum_{t=1}^Tf_t(x_t)-\sum_{t=1}^Tf_t(u_t),
\$
where $\{u_t\}$ are time-varying comparators. Using the path variation, $P_T=\sum_t\norm{u_{t+1}-u_t}$,  to quantify nonstationarity, the minimax optimal dynamic regret bound is known to be 
$\Theta(\sqrt{T(1+P_T)})$ \cite{zhang2018adaptive}.

Extensions of online learning include online Riemannian optimization \cite{hu2024riemannian,wang2023online,hu2023minimizing,wang2023riemannian} and nonstationary distributed online learning \cite{shahrampour2017distributed,yi2020distributed,li2020distributed}. To the best of our knowledge, no prior work has addressed decentralized online Riemannian optimization in dynamic environments, making this the first study in that direction. Furthermore, our work is also the first to investigate decentralized geodesically convex optimization, even in the static case.

\subsection{Overview}

The rest of this paper proceeds as follows. In Section \ref{sec:2}, we introduce the preliminaries, problem formulation, algorithm, and discuss the basic assumptions and properties. Section \ref{sec:3} presents a theoretical analysis of the proposed decentralized method. In Section \ref{sec:opt-free}, we introduce  an improved decentralized algorithm using  a simple  consensus mechanism and derive its dynamic regret bound. Numerical experiments are conducted in Section \ref{sec:numerical}. Section \ref{sec:conclusion} concludes the paper  and the appendix collects proofs.  


\section{Problem Formulation and Algorithm}\label{sec:2}

In this section, we begin by reviewing key geometric concepts and then formulate the decentralized optimization problem in Section \ref{sec:2.2}. Subsequently, we propose our decentralized algorithm in Section \ref{sec:2.3} and discuss its assumptions and relevant properties in Section \ref{sec:2.4}.

\subsection{Preliminary}

We first  recap key concepts from Riemannian geometry and function classes defined on a Hadamard manifold. For more details  on Riemannian geometry, we refer readers to \cite{petersen2006riemannian,chen2024riemannian,absil2008optimization}. A Riemannian manifold $(\cM,g)$ is a smooth manifold equipped with a Riemannian metric,  a smoothly varying family of inner products $g_x$ defined on the tangent space at each point  $x \in \cM$. A curve in $\cM$ is called a geodesic if it locally minimizes\footnote{We say that a curve $\gamma: I \mapsto M$ is locally minimizing if any $t_0 \in I$ has a neighborhood
$U\subset I$ such that $\gamma$
is minimizing the length between each pair of the points within $U$.} 
the length between points. For any $x\in\cM$ and $v\in T_{x}\cM$, there exists a unique geodesic $\gamma_v(t)$ with $x$ as its initial position and $v$ as its initial velocity. The exponential map $\Exp_x:T_{x}\cM\to\cM$ is defined by $\Exp_x(v)=\gamma_v(1)$, provided $\gamma_v(1)$ exists. The distance between two points is defined as the minimum length over all piecewise smooth curves connecting them.

A Hadamard manifold is a simply connected, complete Riemannian manifold with nonpositive curvature. For any two points on a Hadamard manifold, there is a unique geodesic connecting them. Additionally, for any $x\in\cM$, the exponential map $\Exp_x$ is a diffeomorphism between $T_x\cM$ and $\cM$, and its inverse is called the logarithm map, denoted by $\Log_x$.  

Let $\cM$ be a Hadamard manifold. A set $\cX\subseteq\cM$ is  geodesically convex if, for any $x,y\in\cX$, the geodesic connecting $x$ and $y$ lies entirely within $\cX$. A function  $f:\cM\to\RR$ is said to be geodesically convex if,  for any $x,y\in\cM$, any geodesic $\gamma$ with $\gamma(0)=x$ and $\gamma(1)=y$, and for any $t\in[0,1]$, it holds that 
\$
f(\gamma(t))\leq (1-t)f(x)+tf(y).
\$
Let $f$ be geodesically convex and differentiable. Then the gradient $\nabla f(x)\in T_x\cM$ of $f$ at $x$ is a tangent vector such that 
\$
f(y)\geq f(x)+\inner{\nabla f(x)}{\Log_x(y)},
\$
where $\inner{\cdot}{\cdot}$ denotes the inner product in $T_x\cM$. Moreover, $f$ is  $L$-Lipschitz if for any $x,y\in\cM$, 
\$
|f(x)-f(y)|\leq Ld(x,y),
\$
where $d(\cdot,\cdot)$ denotes the geodesic distance. Throughout the rest of this paper, we assume $\cM$ is a Hadamard manifold and $f$ is a geodesically convex, differentiable, and $L$-Lipschitz function  defined on $\cM$.

\subsection{Problem Formulation}\label{sec:2.2}

Consider a network with $n$ agents. At each time $t$, agent $i$ receives a geodesically convex, differentiable, and $L$-Lipschitz\footnote{Our methods and theory also apply to  cases where $f_{i,t}$ are $L$-Lipschitz over a bounded region $\cX^{D}=\{x\mid d(x,\cX)\leq D\}$ without modifications.} function $f_{i,t}$. The global function is given as  $f_t=\sum_if_{i,t}/n$. Assume that the global function $f_t$ has a unique minimizer $x_t^*$ in $\cM$, which lies  in a fixed, bounded,  closed,  geodesically convex set $\cX\subseteq\cM$\footnote{This is not a essential assumption for our algorithm and dynamic regret analysis, and we make this assumption to allow the choice of unique minimizers as comparators.
}. The diameter of $\cX$ is denoted by $D=\sup_{x,y\in\cX}d(x,y)$.

At time $t$, each agent updates its state $x_{i,t}$, and exchanges information with other agents through the network. Inspired  by the consensus step \eqref{equ:1} in Euclidean spaces, we consider  the following consensus step:\footnote{In Section \ref{sec:opt-free}, we provide a simple consensus step \eqref{equ:2.1} which further improves computational efficiency. Comparing with \eqref{equ:2.1}, this simple consensus step avoids additional tuning  and offers a mathematically cleaner formulation.} 
\#\label{equ:2.1}
y_{i,t+1}=\argmin_{y}\sum_{j=1}^nw_{ij}d^2(y,y_{j,t}),\quad i=1,\ldots,n,
\#
where the weight matrix $W=(w_{ij})$ is symmetric and doubly stochastic, and $\{y_{i,t}\}$ and $\{y_{i,t+1}\}$ denote the input and output variables. This consensus step outputs a weighted \Frechet mean, which exists and is unique in a Hadamard manifold. Moreover, if $\{y_{i,t}\}\subseteq\cX$ for a geodesically convex set $\cX$, then $\{y_{i,t+1}\}\subseteq\cX$ as well  \cite{sturm2003probability}. In Euclidean spaces, this reduces to \eqref{equ:1}. Notably, it is a decentralized method, as at each time $t$, agent $i$ only receives information $\{y_{j,t}\}$ from connected agents ($j$ such that $w_{ij}>0$).

However, a limitation of the update \eqref{equ:2.1} is that it does not admit a closed-form solution and requires an additional optimization algorithm. Fortunately, when $\cM$ is a Hadamard manifold, the optimization problem in \eqref{equ:2.1} is well studied \cite{bacak2014computing}. {For now, we assume that the minimizer in \eqref{equ:2.1} is attainable.}
Our objective is to develop an algorithm that minimizes the dynamic regret \eqref{equ:1.2}, where the comparators $\{u_t\}$ are given by the global minimizers $\{x_t^*\}$ or another sequence of points in $\cX$.


\subsection{A Decentralized Optimization Algorithm}\label{sec:2.3}

Based on the consensus step \eqref{equ:2.1}, we  develop a decentralized optimization algorithm for minimizing the global function. Our strategy alternates between projected Riemannian gradient descent and the consensus step \eqref{equ:2.1}. Specifically, we initialize $x_{i,1}=x_1\in\cX$ for all agents $i$. At time $t$, after receiving the function $f_{i,t}$, each agent performs the following updates:
\#
y_{i,t+1}&=\cP_{\cX}(\Exp_{x_{i,t}}(-\eta \nabla_{i,t})),\quad \nabla_{i,t}=\nabla f_{i,t}(x_{i,t}),\tag{I}\label{alg:I}\\
x_{i,t+1}&=\argmin_{x}\sum_{j=1}^nw_{ij}d^2(x,y_{j,t+1}), \tag{II}\label{alg:II}
\#
where $\eta>0$ is the stepsize, and $\cP_{\cX}(z)=\argmin_{y\in\cX}d(y,z)$ is the projection operator onto the geodesically convex set $\cX$.  The resulting algorithm is  called {\bf decentralized projected Riemannian gradient descent} ({\bf DPRGD}), where step \eqref{alg:I}  corresponds to  projected Riemannian gradient descent, and step \eqref{alg:II} corresponds to  the consensus step. 
While we use  the same name, our algorithm is distinct from the one proposed in \cite{deng2023decentralized}; we focus on geodesically convex optimization on Hadamard manifolds, whereas their work examines non-geodesically convex optimization on compact manifolds.  
Our goal is to derive upper bounds for the dynamic regret as defined in \eqref{equ:1.2}.

Before proceeding to theoretical analysis, we highlight that our decentralized algorithm resembles the decentralized online mirror descent proposed in Euclidean spaces \cite{shahrampour2017distributed}. Specifically, our projected Riemannian gradient descent corresponds to their mirror descent step when the Bregman divergence is chosen as the quadratic function. Additionally, our consensus step reduces to the weighted average \eqref{equ:1}, which is the Euclidean consensus step used in their algorithm. Thus our algorithm is a generalized version of their algorithm in the Riemannian manifold setting.

This generalization presents two key challenges. First, the consensus step \eqref{equ:2.1} is not a simple matrix multiplication, introducing new difficulties in the consensus analysis. Second, the regret analysis for Riemannian optimization requires advanced geometric techniques, such as comparison theorems from Riemannian geometry \cite{zhang2016first,petersen2006riemannian}.


\subsection{Assumptions and Properties}\label{sec:2.4}

This section outlines several assumptions and properties relevant the decentralized problem.
The first assumption concerns the connectivity of the network. We assume that  $W$  is  symmetric and  doubly stochastic which can be easily constructed from an undirected connected network using the Metropolis rule, the Laplacian-based constant edge weight matrix  \cite{shi2015extra}, or the maximum-degree rule \cite{boyd2004fastest}.  The second-largest singular value  $\sigma_2(W)$  of $W$  characterizes the network’s connectivity, with a smaller  $\sigma_2(W)$  indicating better connectivity. To ensure the feasibility of the decentralized algorithm, we assume  $\sigma_2(W) < 1$. In many decentralized problems, the convergence rate or regret analysis depends positively on  $1/(1 - \sigma_2(W))$  \cite{shahrampour2017distributed,nedic2018network},  which is assumed to be finite.



Second, we assume that the comparators  $\{u_t\}_{t=1}^n$  lie within  $\cX$, a closed, bounded, geodesically convex subset of $\cM$. Bounded decision variables are a standard requirement in online convex optimization \cite{cesa2006prediction,shahrampour2017distributed,besbes2015non}, ensuring that  $d^2(x, y)$ is Lipschitz over  $\cX$ . This corresponds to the Lipschitz property of the Bregman divergence, as noted in \cite[Assumption 4]{shahrampour2017distributed}, and is also assumed in the original paper \cite{zhang2016first}  discussing geodesically convex optimization. One useful implication of this assumption is the following lemma concerning the projection  $\cP_{\cX}$ onto the set $\cX$.  

\begin{lemma}[Lemma 7, \cite{zhang2016first}, \cite{bacak2014computing}]\label{lma:1}
    Let $\cM$ be a Hadamard manifold and $\cX\subseteq\cM$ be a closed geodesically convex set. Then the projection $\cP_{\cX}(z)=\argmin_{y}d(y,z)$ is single-valued and nonexpansive.  That is, for any $x,y\in\cM$, 
    \$
    d(\cP_{\cX}(x),\cP_{\cX}(y))\leq d(x,y).
    \$
    In particular, $d(x,\cP_{\cX}(y))\leq d(x,y)$ for all $x\in\cX$ and $y\in\cM$.
\end{lemma}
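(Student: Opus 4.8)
The plan is to establish the three claims in sequence, using only standard facts about Hadamard (equivalently, $\mathrm{CAT}(0)$) geometry: completeness via Hopf--Rinow, geodesic strong convexity of the squared distance, and the comparison (law-of-cosines) inequality $d^2(a,b)\ge d^2(c,a)+d^2(c,b)-2\inner{\Log_c a}{\Log_c b}$, which holds on manifolds of nonpositive curvature.

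First, single-valuedness. For existence I would take a minimizing sequence $\{y_k\}\subseteq\cX$ for $\inf_{y\in\cX}d(y,z)$; it is bounded, so by Hopf--Rinow it has a subsequence converging to some $y_*$, which lies in $\cX$ because $\cX$ is closed and attains the infimum by continuity of $d(\cdot,z)$. For uniqueness I would use that $y\mapsto d^2(y,z)$ is strongly geodesically convex: if $y_1\neq y_2$ both minimize, then along the geodesic joining them (which remains in $\cX$ by geodesic convexity) the semiparallelogram inequality gives $d^2(m,z)\le \tfrac12 d^2(y_1,z)+\tfrac12 d^2(y_2,z)-\tfrac14 d^2(y_1,y_2)$ at the midpoint $m\in\cX$, strictly below the minimum value, a contradiction.

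The engine for the two inequalities is the first-order optimality condition. Since $p:=\cP_\cX(x)$ minimizes $d^2(\cdot,x)$ over $\cX$, testing against the geodesic from $p$ toward any $q\in\cX$ and applying the first variation of arc length yields $\inner{\Log_p x}{\Log_p q}\le 0$. The ``in particular'' claim then follows immediately: for $x\in\cX$ and $p:=\cP_\cX(y)$, the condition gives $\inner{\Log_p y}{\Log_p x}\le 0$, and the comparison inequality with apex $p$ gives $d^2(x,y)\ge d^2(p,y)+d^2(p,x)\ge d^2(x,\cP_\cX(y))$.

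For the full nonexpansiveness, set $p=\cP_\cX(x)$, $p'=\cP_\cX(y)$, with optimality conditions $\inner{\Log_p x}{\Log_p p'}\le 0$ and $\inner{\Log_{p'} y}{\Log_{p'} p}\le 0$. The main obstacle is that, unlike the Euclidean argument, one cannot simply add these two inequalities, since the tangent vectors live in different tangent spaces. To bypass this I would invoke the convexity of $t\mapsto d(\gamma_1(t),\gamma_2(t))$ for the geodesics $\gamma_1$ from $x$ to $p$ and $\gamma_2$ from $y$ to $p'$, a hallmark of nonpositive curvature. Writing $g(t)=d(\gamma_1(t),\gamma_2(t))$ (smooth near $t=1$ whenever $p\neq p'$, since Hadamard manifolds have no cut locus), we have $g(0)=d(x,y)$ and $g(1)=d(p,p')$, and the first variation formula gives $g'(1)=\frac{1}{d(p,p')}\big(\inner{\Log_p x}{\Log_p p'}+\inner{\Log_{p'} y}{\Log_{p'} p}\big)\le 0$. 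Since $g$ is convex with $g'(1^-)\le 0$, it is nonincreasing on $[0,1]$, so $d(p,p')=g(1)\le g(0)=d(x,y)$ (the case $p=p'$ being trivial). Packaging the optimality data through this one convex scalar function is exactly what replaces the Euclidean ``add the two variational inequalities'' step.
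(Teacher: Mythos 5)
The paper does not prove this lemma itself; it is imported verbatim from \cite{zhang2016first} (Lemma 7) and \cite{bacak2014computing}, so there is no in-paper argument to compare against. Judged on its own, your proof is correct and self-contained. Existence and uniqueness via Hopf--Rinow plus the semiparallelogram (strong convexity) inequality is standard and sound. The variational inequality $\inner{\Log_p x}{\Log_p q}\le 0$ for all $q\in\cX$ is the right engine, and your derivation of the ``in particular'' claim from it together with the nonpositive-curvature law of cosines (the paper's Lemma \ref{lma:a1}) is exactly right. For full nonexpansiveness, your key move --- packaging the two optimality conditions, which live in different tangent spaces, into the single scalar function $g(t)=d(\gamma_1(t),\gamma_2(t))$, using convexity of the distance between affinely parametrized geodesics in a $\mathrm{CAT}(0)$ space and the first variation formula to get $g'(1^-)\le 0$ --- is valid; the sign bookkeeping $g'(1)=\frac{1}{d(p,p')}\bigl(\inner{\Log_p x}{\Log_p p'}+\inner{\Log_{p'}y}{\Log_{p'}p}\bigr)$ checks out, and you correctly flag the degenerate case $p=p'$ and the smoothness of $g$ near $t=1$. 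This is essentially the Bridson--Haefliger-style $\mathrm{CAT}(0)$ argument; the cited references reach the same conclusion by a slightly different packaging (two applications of the comparison law of cosines at $p$ and $p'$, using that both angles of the quadrilateral $x,p,p',y$ at $p$ and $p'$ are at least $\pi/2$), which avoids differentiating $g$ but is otherwise equivalent. Either route is fine; yours trades the quadrilateral comparison for one convexity fact plus first variation, which is arguably cleaner on a smooth Hadamard manifold where $\Log$ and first variation are available.
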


Using Lemma \ref{lma:1} and the definition of the exponential map,  we can show that the projected Riemannian gradient descent step \eqref{alg:I} produces $y_{i,t+1}$ such that $d(y_{i,t+1},x_{i,t})\leq \eta\norm{\nabla_{i,t}}$. Here we use the fact that $x_{i,t}\in\cX$ as a consequence of the properties of the weighted \Frechet mean on Hadamard manifolds \cite[Proposition 6.1]{sturm2003probability}. 



Furthermore, we assume that  $\cM$  is a Hadamard manifold with sectional curvatures bounded below by some  $\kappa < 0$. This assumption aids in the non-asymptotic convergence analysis of first-order methods for geodesically convex optimization \cite{zhang2016first}. A useful lemma under this assumption is provided below, which allows for the comparison of squared distances.

\begin{lemma}[Lemma 5, \cite{zhang2016first}]\label{lma:2}
    Suppose $\cM$ is a Hadamard manifold with sectional curvatures lower bounded by $\kappa<0$. For any geodesic triangle in $\cM$ with edge lengths $a,b,c$, it holds that
    \$
    a^2\leq  \sqrt{|\kappa|}c\coth(\sqrt{|\kappa|}c)b^2 + c^2 - 2bc\cos A,
    \$
    where $A$ is the opposite angle associated with $a$. 
\end{lemma}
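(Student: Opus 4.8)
The plan is to reduce the inequality to a two-dimensional statement in the model space of constant curvature $\kappa$, and then to establish that statement by a second-order expansion of the squared-distance function. Label the triangle so that the angle $A$ sits at a vertex $x$, with adjacent sides $c=d(x,y)$, $b=d(x,z)$ and opposite side $a=d(y,z)$, and set $\lambda=\sqrt{|\kappa|}$. Because the sectional curvature of $\cM$ is bounded below by $\kappa$, Toponogov's hinge comparison theorem applies: taking the comparison hinge in the space form $M_\kappa$ with the same side lengths $b,c$ and the same included angle $A$, its opposite side $\bar a$ satisfies $a\le\bar a$. It therefore suffices to prove $\bar a^2\le\lambda c\coth(\lambda c)\,b^2+c^2-2bc\cos A$ inside $M_\kappa$.

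For the model-space inequality I would expand the squared distance along the $b$-side rather than manipulate the hyperbolic law of cosines. Let $\gamma$ be the unit-speed geodesic from $x$ toward $z$ and set $h(t)=d^2(\gamma(t),y)$, so $h(0)=c^2$ and $h(b)=\bar a^2$. Since $\nabla_x d^2(x,y)=-2\Log_x y$ and the angle between $\Log_x y$ and $\gamma'(0)$ is $A$, one gets $h'(0)=-2c\cos A$, and Taylor's formula gives $h(b)=c^2-2bc\cos A+\int_0^b(b-t)\,h''(t)\,dt$. In constant curvature the Hessian of $\tfrac12 d^2(\cdot,y)$ has radial eigenvalue $1$ and tangential eigenvalue $\lambda\rho\coth(\lambda\rho)$ at distance $\rho$, so $h''(t)=2+2\big(\lambda\rho_t\coth(\lambda\rho_t)-1\big)\sin^2\theta_t$, where $\rho_t=d(\gamma(t),y)$ and $\theta_t$ is the angle of $\gamma'(t)$ with the radial direction. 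As $\int_0^b(b-t)\,dt=b^2/2$, the whole claim collapses to the pointwise bound $g(t):=\big(\lambda\rho_t\coth(\lambda\rho_t)-1\big)\sin^2\theta_t\le\lambda c\coth(\lambda c)-1$.

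This pointwise bound is the main obstacle, since $\rho_t$ does not stay on one side of $c$: for acute $A$ the geodesic first moves toward $y$ so $\rho_t<c$, while for obtuse $A$ it recedes and $\rho_t>c$, so monotonicity of $\lambda\rho\coth(\lambda\rho)$ alone is not enough. The missing ingredient is Clairaut's relation $\sinh(\lambda\rho_t)\sin\theta_t=\sinh(\lambda c)\sin A$ for geodesics in the rotationally symmetric model metric. Writing $g(t)=\sinh^2(\lambda c)\sin^2 A\;\Psi(\rho_t)$ with $\Psi(\rho)=\big(\lambda\rho\coth(\lambda\rho)-1\big)/\sinh^2(\lambda\rho)$, I would split into cases: when $\rho_t\ge c$ use that $\Psi$ is decreasing together with $\sin^2 A\le1$; when $\rho_t<c$ use $\sin A\le\sinh(\lambda\rho_t)/\sinh(\lambda c)$ (which follows from Clairaut and $\sin\theta_t\le1$) to cancel $\sinh^2(\lambda\rho_t)$ and then apply monotonicity of $\lambda\rho\coth(\lambda\rho)$. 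Either case yields $g(t)\le\lambda c\coth(\lambda c)-1$, and integrating recovers $\bar a^2\le\lambda c\coth(\lambda c)\,b^2+c^2-2bc\cos A$.

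The remaining pieces are routine: the elementary calculus facts that $u\mapsto u\coth u$ is increasing and $\Psi$ is decreasing on $(0,\infty)$, and the precise invocation of Toponogov's comparison and Clairaut's relation in $M_\kappa$. I expect no difficulty there; the only genuinely delicate point is obtaining the sharp coefficient $\lambda c\coth(\lambda c)$, depending on $c$ alone, rather than a cruder bound involving $\max_t\rho_t$, which is exactly what the Clairaut-based case analysis secures.
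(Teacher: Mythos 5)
The paper does not prove this lemma at all --- it is imported verbatim as Lemma 5 of \cite{zhang2016first} --- so the relevant comparison is with the proof in that reference. Your argument is correct and takes a genuinely different route. Both proofs begin the same way, invoking Toponogov's hinge comparison to reduce to the model plane of constant curvature $\kappa$; but where Zhang--Sra then manipulate the hyperbolic law of cosines $\cosh(\lambda a)=\cosh(\lambda b)\cosh(\lambda c)-\sinh(\lambda b)\sinh(\lambda c)\cos A$ through a chain of analytic estimates, you instead Taylor-expand $h(t)=d^2(\gamma(t),y)$ along the $b$-side, use the exact Hessian eigenvalues $1$ and $\lambda\rho\coth(\lambda\rho)$ of $\tfrac12 d^2(\cdot,y)$, and control the tangential contribution via Clairaut's relation $\sinh(\lambda\rho_t)\sin\theta_t=\sinh(\lambda c)\sin A$. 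The two-case analysis ($\rho_t\ge c$ via monotone decrease of $\Psi(\rho)=(\lambda\rho\coth(\lambda\rho)-1)/\sinh^2(\lambda\rho)$, and $\rho_t<c$ via monotone increase of $u\coth u$, where in fact $\sin\theta_t\le 1$ already suffices without Clairaut) correctly yields the pointwise bound $g(t)\le\lambda c\coth(\lambda c)-1$, and the two monotonicity facts you defer are indeed elementary. What your approach buys is an explanation of \emph{why} the coefficient is $\zeta(\kappa,c)=\lambda c\coth(\lambda c)$ with $c$ the side adjacent to the squared term: it is the worst-case tangential Hessian eigenvalue encountered along the $b$-side, and Clairaut is exactly what prevents the bound from degrading to one involving $\max_t\rho_t$. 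The cost is that it needs more geometric machinery (Hessian comparison in polar coordinates, Clairaut) than the purely trigonometric original.
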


\section{Theoretical Analysis}\label{sec:3}

In this section, we present a dynamic regret analysis for decentralized projected Riemannian gradient descent. Our theoretical framework resembles that of \cite{shahrampour2017distributed},  where they upper bound the tracking error and network error. To generalize their results to nonlinear settings, we leverage geometric tools to develop novel theories. 
Specifically, in Section \ref{sec:3.1}, we first analyze the consensus step \eqref{equ:2.1} or \eqref{alg:II}. We show that the weighted \Frechet mean-based consensus exhibits a linear contraction rate determined by  $\sigma_2(W)$. Building on this, we derive the tracking and network errors in Section \ref{sec:3.2}. Finally, by combining these errors, we establish an upper bound on the dynamic regret and discuss its implications in both static and centralized scenarios.


\subsection{Consensus}\label{sec:3.1}

Our first result focuses on the consensus step \eqref{alg:II}. Given a  matrix $W=(w_{ij})$, the consensus step  can be viewed as a mapping from the set of points  $\{y_{i,t+1}\}$  to a new set of points  $\{x_{i,t+1}\}$. In the following lemma, we show that this map significantly reduces the \Frechet variance of the data points. As a result, this implies that the agents’ states will converge to consensus at a linear rate if we repeatedly apply the consensus step \eqref{alg:II} (without performing  \eqref{alg:I}). This lemma is crucial for quantifying the network error in Section \ref{sec:3.2}. 



\begin{lemma}[Linear variance reduction for  consensus step \eqref{alg:II}]\label{lma:3}
    Suppose $\cM$ is a Hadamard manifold. Suppose $\{y_{i,t+1}\}_{i=1}^n$ are $n$ points on the manifold and $\{x_{i,t+1}\}_{i=1}^n$ are the transformed $n$ points via \eqref{alg:II}, where $W=(w_{ij})$ is a symmetric doubly stochastic matrix. Then it holds that
    \#\label{equ:contraction}
\VF(\{x_{i,t+1}\})\leq \sigma_2^2(W)\cdot \VF(\{y_{i,t+1}\}),
\#
where $\VF(\{z_i\})$ denotes the \Frechet variance of $\{z_i\}_{i=1}^n$ defined by 
\#\label{equ:VF}
\VF(\{z_i\})=\min_z\frac{1}{n}\sum_id^2(z,z_i).
\#
\end{lemma}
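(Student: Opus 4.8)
The plan is to bound $\VF(\{x_{i,t+1}\})$ by evaluating its defining infimum at a convenient suboptimal center and then to linearize the weighted \Frechet means in a single tangent space, where the Euclidean spectral argument applies. Write $y_i:=y_{i,t+1}$ and $x_i:=x_{i,t+1}$, and let $\bar y$ be the unweighted \Frechet mean of $\{y_i\}$, so that $\VF(\{y_i\})=\frac1n\sum_i d^2(\bar y,y_i)$. Setting $v_j:=\Log_{\bar y}(y_j)\in T_{\bar y}\cM$, the first-order optimality of $\bar y$ gives $\sum_j v_j=0$, while $\norm{v_j}=d(\bar y,y_j)$; hence $n\,\VF(\{y_i\})=\sum_j\norm{v_j}^2$. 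Since $\VF(\{x_i\})=\min_z\frac1n\sum_i d^2(z,x_i)\le\frac1n\sum_i d^2(\bar y,x_i)$, it suffices to control each $d(\bar y,x_i)$. The heart of the argument is the pointwise comparison
\$
d(\bar y,x_i)\le\Big\|\sum_{j=1}^n w_{ij}v_j\Big\|,
\$
which I establish below and which holds with equality in the Euclidean case, where $x_i=\sum_j w_{ij}y_j$.

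To prove this comparison I would invoke the law of cosines in nonpositive curvature (the CAT(0) lower bound, which requires only the Hadamard assumption and not the curvature lower bound $\kappa$ of Lemma \ref{lma:2}): for any $p,q,r\in\cM$,
\$
d^2(q,r)\ge d^2(p,q)+d^2(p,r)-2\inner{\Log_p q}{\Log_p r}.
\$
Applying this at $p=x_i$ with $q=\bar y$, $r=y_j$, multiplying by $w_{ij}$ and summing over $j$, the cross term collapses to $\inner{\Log_{x_i}\bar y}{\sum_j w_{ij}\Log_{x_i} y_j}=0$ by the first-order optimality $\sum_j w_{ij}\Log_{x_i}(y_j)=0$ of the weighted \Frechet mean $x_i$; this yields the variance inequality
\$
\sum_j w_{ij}d^2(\bar y,y_j)\ge d^2(\bar y,x_i)+\sum_j w_{ij}d^2(x_i,y_j).
\$
Applying the same comparison at $p=\bar y$ with $q=x_i$, $r=y_j$, multiplying by $w_{ij}$ and summing (using $\sum_j w_{ij}=1$ and abbreviating $\hat u_i:=\sum_j w_{ij}v_j$, $u_i:=\Log_{\bar y}x_i$) produces a matching lower bound for $\sum_j w_{ij}d^2(x_i,y_j)$. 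Chaining the two inequalities and cancelling the common term $\sum_j w_{ij}d^2(\bar y,y_j)$ leaves $d^2(\bar y,x_i)\le\inner{u_i}{\hat u_i}\le d(\bar y,x_i)\,\norm{\hat u_i}$, i.e. the claimed comparison $d(\bar y,x_i)\le\norm{\hat u_i}$.

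It then remains to run the Euclidean spectral argument inside $T_{\bar y}\cM$. Stacking $v_1,\dots,v_n$ as the rows of a matrix $V$ (identifying $T_{\bar y}\cM\cong\RR^m$), the vectors $\hat u_i=\sum_j w_{ij}v_j$ are the rows of $WV$, and $\sum_j v_j=0$ means every column of $V$ is orthogonal to $\mathbf 1$. Because $W$ is symmetric and doubly stochastic, $\mathbf 1$ is its top eigenvector and $W$ acts on $\mathbf 1^\perp$ with operator norm $\sigma_2(W)$; applying this column by column gives $\sum_i\norm{\hat u_i}^2=\norm{WV}_F^2\le\sigma_2^2(W)\norm{V}_F^2=\sigma_2^2(W)\sum_j\norm{v_j}^2$. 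Combining with the two reductions above,
\$
\VF(\{x_i\})\le\frac1n\sum_i d^2(\bar y,x_i)\le\frac1n\sum_i\norm{\hat u_i}^2\le\sigma_2^2(W)\,\VF(\{y_i\}),
\$
which is \eqref{equ:contraction}.

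I expect the main obstacle to be the comparison $d(\bar y,x_i)\le\norm{\hat u_i}$. A naive estimate from the $2$-strong geodesic convexity of $x\mapsto\sum_j w_{ij}d^2(x,y_j)$ only yields $d(\bar y,x_i)\le 2\norm{\hat u_i}$, and a Jensen bound $d^2(\bar y,x_i)\le\sum_j w_{ij}\norm{v_j}^2$ degrades the contraction factor all the way to $1$. Extracting the sharp factor $\sigma_2^2(W)$ requires pairing the variance inequality with the nonpositive-curvature law of cosines exactly as above, which is precisely where the Hadamard structure enters; the remaining reductions are elementary and mirror the Euclidean case of \cite{shahrampour2017distributed}.
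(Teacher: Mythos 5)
Your proof is correct and takes essentially the same route as the paper's: both arguments rest on the variance inequality for the weighted \Frechet mean (which you rederive from the CAT(0) law of cosines plus first-order optimality, where the paper cites Theorem 2.4 of \cite{bacak2014computing}), the law of cosines applied at $\bar y$, and a spectral bound by $\sigma_2(W)$ on the logarithms in $T_{\bar y}\cM$ using $\sum_j \Log_{\bar y} y_j = 0$. The only organizational difference is that you extract the pointwise comparison $d(\bar y,x_i)\leq\norm{\sum_j w_{ij}\Log_{\bar y}y_{j}}$ before aggregating, whereas the paper sums over $i,j$ first and closes with a trace/Cauchy--Schwarz estimate on $W-\tfrac{1}{n}\mathbf{1}\mathbf{1}^\top$; both yield the same contraction factor $\sigma_2^2(W)$.
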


\subsection{Network and Tracking Errors}\label{sec:3.2}


We now proceed to investigate the decentralized algorithm. The first lemma provides an upper bound on the deviation of the local state from the consensus state at each iteration, where the bound is proportional to the stepsize  $\eta$  in \eqref{alg:I}. The proof relies on Lemma \ref{lma:3} and is deferred to the appendix.

\begin{lemma}[Network error]\label{lma:4}
    Consider the problem setting in Section \ref{sec:2.2} and suppose conditions in Section \ref{sec:2.3} hold. Let $\{x_{i,t}\}$ be the iterates  from the decentralized algorithm in Section \ref{sec:2.3}. Then we have
    \$
    d(x_{i,t},\bar x_t)\leq \frac{\eta\sigma_2(W)\sqrt{n}L}{1-\sigma_2(W)},\quad\forall i,t,
    \$
    where $\bar x_t$ is the \Frechet mean of $\{x_{i,t}\}_{i=1}^n$ and $\sigma_2(W)<1$ is the second largest singular value of $W$. 
\end{lemma}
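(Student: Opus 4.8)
The plan is to control the \Frechet variance $\VF(\{x_{i,t}\})$ of the local states at each step and then convert it into a pointwise deviation bound. Write $v_t=\sqrt{\VF(\{x_{i,t}\})}$. The first observation is that, since $\VF(\{x_{i,t}\})=\frac1n\sum_j d^2(x_{j,t},\bar x_t)$ with $\bar x_t$ the \Frechet mean, each individual term satisfies $d^2(x_{i,t},\bar x_t)\le \sum_j d^2(x_{j,t},\bar x_t)=n\,v_t^2$, so $d(x_{i,t},\bar x_t)\le \sqrt n\,v_t$. Hence it suffices to prove the uniform bound $v_t\le \eta\sigma_2(W)L/(1-\sigma_2(W))$ for all $t$, and the whole argument reduces to a scalar recursion on $v_t$.

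Next I would establish a one-step recursion. The local update \eqref{alg:I} moves each state by at most $d(y_{i,t+1},x_{i,t})\le \eta\norm{\nabla_{i,t}}\le \eta L$, the displacement bound already recorded in Section \ref{sec:2.4} (using $x_{i,t}\in\cX$ together with the $L$-Lipschitz property, which forces $\norm{\nabla_{i,t}}\le L$). To push this displacement through the \Frechet variance, the key device is that $\bar x_t$ is an admissible center in the minimization defining $\VF(\{y_{i,t+1}\})$, so $\VF(\{y_{i,t+1}\})\le \frac1n\sum_i d^2(\bar x_t,y_{i,t+1})$. Applying the scalar triangle inequality $d(\bar x_t,y_{i,t+1})\le d(\bar x_t,x_{i,t})+\eta L$, expanding the square, and bounding the cross term by Jensen's inequality $\frac1n\sum_i d(\bar x_t,x_{i,t})\le \bigl(\frac1n\sum_i d^2(\bar x_t,x_{i,t})\bigr)^{1/2}=v_t$, I obtain $\VF(\{y_{i,t+1}\})\le (v_t+\eta L)^2$, i.e.\ $\sqrt{\VF(\{y_{i,t+1}\})}\le v_t+\eta L$. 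This mirrors the Euclidean standard-deviation calculation without ever subtracting a common mean.

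I would then invoke the consensus contraction of Lemma \ref{lma:3}, namely $\VF(\{x_{i,t+1}\})\le \sigma_2^2(W)\,\VF(\{y_{i,t+1}\})$, which gives $v_{t+1}\le \sigma_2(W)\sqrt{\VF(\{y_{i,t+1}\})}\le \sigma_2(W)(v_t+\eta L)$. Since all agents are initialized at the common point $x_1$, we have $v_1=0$. Unrolling the recursion yields $v_t\le \sigma_2(W)\eta L\sum_{k=0}^{t-2}\sigma_2(W)^k\le \eta\sigma_2(W)L/(1-\sigma_2(W))$, and combining with $d(x_{i,t},\bar x_t)\le \sqrt n\,v_t$ delivers the claimed bound uniformly in $i$ and $t$.

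I expect the only delicate point to be the second step, the propagation of the displacement bound through the \Frechet variance, since on a manifold there is no linear structure and one cannot simply center the points. The resolution is to avoid any triangle inequality applied directly to $\sqrt{\VF}$ and instead exploit that $\VF$ is itself a minimum: plugging in the fixed center $\bar x_t$ produces a clean upper bound that reduces everything to scalar triangle and Jensen inequalities. Notably, curvature plays no role in this lemma, so Lemma \ref{lma:2} is not needed here; the remainder is the standard geometric-series argument for decentralized methods.
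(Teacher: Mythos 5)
Your proposal is correct and follows essentially the same route as the paper's proof: bound the displacement of step \eqref{alg:I} by $\eta L$, propagate it through the \Frechet variance by testing the minimum at the fixed center $\bar x_t$, contract via Lemma \ref{lma:3}, and unroll the resulting geometric recursion from $v_1=0$. The only cosmetic difference is that the paper packages the triangle-inequality step as a Minkowski inequality on the vectors of distances, whereas you expand the square and control the cross term with Jensen's inequality—these are the same computation.
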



The upper bound in Lemma \ref{lma:4} is similar to that established in Lemma 1 of \cite{shahrampour2017distributed}. This bound is fundamentally linked to the network connectivity, represented by  $\sigma_2(W)$. When  $\sigma_2(W) = 0$, the network achieves perfect consensus among its agents. Conversely, as  $\sigma_2(W)$  approaches 1, the disparity between node states can grow without limit in the worst-case scenario. This relationship highlights how network topology influences decentralized optimization \cite{nedic2018network}. Additionally, the upper bound shows a linear dependence on the stepsize  $\eta$. A diminishing stepsize leads to asymptotic consensus among the agents’ states. However, as we will demonstrate, choosing an excessively small stepsize may result in increased optimization error. Therefore, selecting an appropriate stepsize is crucial for achieving optimal performance. To further explore this relationship, we next analyze the tracking error in the following lemma, with the proof provided in the appendix.

\begin{lemma}[Tracking error]\label{lma:5}
    Consider the problem setting in Section \ref{sec:2.2} and assume the conditions in Section \ref{sec:2.4} hold. Let $\{x_{i,t},y_{i,t+1}\}$ be the iterates generated by the decentralized algorithm in Section \ref{sec:2.3}, and $\{u_t\}$ be a sequence of points in the convex set $\cX$. Then it holds that
    \$
    \frac{1}{n}\sum_{i=1}^n\sum_{t=1}^T\left(d^2(u_t,x_{i,t})-d^2(u_t,y_{i,t+1})\right) 
    \leq D^2+2D\sum_{t=1}^Td(u_{t+1},u_t),
    \$
    where $D=\sup_{x,y\in\cX}d(x,y)$ is the diameter of $\cX$. 
\end{lemma}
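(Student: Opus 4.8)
The plan is to run the standard online-convex-optimization telescoping argument, replacing the Euclidean weighted-average/Jensen step by the variance inequality for weighted \Frechet means on Hadamard (NPC) spaces. Write $A_t=\frac1n\sum_{i=1}^n d^2(u_t,x_{i,t})$ and $B_t=\frac1n\sum_{i=1}^n d^2(u_t,y_{i,t+1})$, so that the quantity to be bounded is exactly $\sum_{t=1}^T(A_t-B_t)$. The goal is to produce a per-step inequality of the form $A_t-B_t\le (A_t-A_{t+1})+2D\,d(u_{t+1},u_t)$, after which summing over $t$ telescopes the first part and leaves the path-variation term. All points in sight stay in $\cX$: the initialization gives $x_{i,1}=x_1\in\cX$, step \eqref{alg:I} ends with $\cP_{\cX}$ so $y_{i,t+1}\in\cX$, and the comparators are assumed to lie in $\cX$; hence every pairwise distance is at most the diameter $D$.

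The first, and I expect the only nontrivial, step is a one-step contraction of the consensus map \eqref{alg:II} against an arbitrary reference point. Since $x_{i,t+1}=\argmin_x\sum_j w_{ij}d^2(x,y_{j,t+1})$ is a weighted \Frechet mean, the variance inequality on Hadamard manifolds \cite{sturm2003probability} gives, for every $z\in\cM$, the bound $d^2(z,x_{i,t+1})\le\sum_{j}w_{ij}\,d^2(z,y_{j,t+1})$ (this is the geometric replacement for the convexity of $\|z-\cdot\|^2$ that makes the Euclidean case immediate). Taking $z=u_{t+1}$, averaging over $i$, and using $\sum_i w_{ij}=1$ from the double stochasticity of $W$ yields $A_{t+1}=\frac1n\sum_i d^2(u_{t+1},x_{i,t+1})\le\frac1n\sum_i d^2(u_{t+1},y_{i,t+1})$. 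This is the main obstacle; everything afterward is routine.

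The second estimate controls the comparator drift. For each fixed $y=y_{i,t+1}$ I would factor $d^2(u_{t+1},y)-d^2(u_t,y)=\bigl(d(u_{t+1},y)-d(u_t,y)\bigr)\bigl(d(u_{t+1},y)+d(u_t,y)\bigr)$, bound the first factor by $d(u_{t+1},u_t)$ via the triangle inequality, and the second by $2D$ via the diameter bound, giving $\frac1n\sum_i d^2(u_{t+1},y_{i,t+1})\le B_t+2D\,d(u_{t+1},u_t)$. Chaining this with the consensus contraction produces $A_{t+1}\le B_t+2D\,d(u_{t+1},u_t)$, equivalently $A_t-B_t\le (A_t-A_{t+1})+2D\,d(u_{t+1},u_t)$. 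Summing over $t=1,\dots,T$, the telescoped part collapses to $A_1-A_{T+1}\le A_1\le D^2$ (using $A_{T+1}\ge 0$ and $d^2(u_1,x_{i,1})\le D^2$), which yields precisely $D^2+2D\sum_{t=1}^T d(u_{t+1},u_t)$ and completes the proof.
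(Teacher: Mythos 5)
Your proof is correct and follows essentially the same route as the paper's: the same three-term telescoping decomposition, the same Jensen/variance inequality for weighted \Frechet means on Hadamard spaces to kill the consensus term, and the same triangle-inequality-plus-diameter bound for the comparator drift. The only cosmetic difference is that you apply the \Frechet-mean inequality at $u_{t+1}$ and shift the comparator on $y_{i,t+1}$, whereas the paper shifts the comparator on $x_{i,t+1}$ and applies Jensen at $u_t$.
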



Lemma \ref{lma:5}  analyzes the projected Riemannian gradient descent step \eqref{alg:I}. The term  $P_T = \sum_{i=1}^T d(u_{t+1}, u_t)$  in the upper bound is analogous to the path variation in Euclidean scenarios \cite{zhang2018adaptive}, and it quantifies the nonstationarity of the comparators  $\{u_t\}$.   Achieving sublinear dynamic regret requires  $P_T = o(T)$  \cite{zhang2018adaptive}.

It is important to note that the comparators  $\{u_t\}$ in Lemma \ref{lma:5} can be arbitrary.  While it is common to select the minimizers of the global functions  $\{f_t\}$ as comparators, this can result in a large path variation $P_T$. Instead, it may be more advantageous to choose a sequence of points in  $\mathcal{X}$  that balances the path variation $P_T$ and the comparator performance $\sum_tf_t(u_t)$. In the Euclidean setting, 
optimizing over comparators in this manner  often leads to improved regret bounds \cite{zhang2018adaptive}. 


\subsection{Finite-Time Performance: Dynamic Regret Bounds}\label{sec:3.3}

This section combines Lemmas \ref{lma:4} and \ref{lma:5} to derive an upper bound on the dynamic regret of the decentralized algorithm in Section \ref{sec:2.3}. As in Lemma \ref{lma:5}, our theory accommodates any sequence of comparators $\{u_t\}$ in $\cX$. The main result is presented in the following theorem.

\begin{theorem}[Dynamic regret]\label{thm:1}
Consider the problem setting in Section \ref{sec:2.2} and assume conditions in Section \ref{sec:2.4} hold. Let $\{x_{i,t}\}$ be the iterates generated by the decentralized algorithm in Section \ref{sec:2.3}, and let $\{u_t\}$ be a sequence of points in the convex set $\cX$. Then, the dynamic regret \eqref{equ:1.2} can be bounded as follows:
\$
\textnormal{D-Regret}(\{x_{i,t}\})\leq \eta TC+\frac{1}{\eta}(D^2+2DP_T),
\$
where $C=L^2\zeta(\kappa,D+\eta L)+2\sigma_2(W)\sqrt{n}L^2/(1-\sigma_2(W))$ is a constant, $\zeta(\kappa,c)=\sqrt{|\kappa|c}\coth(\sqrt{|\kappa|}c)$, $\kappa<0$ is the lower bound on the sectional curvature of $\cM$, $D$ is the diameter of $\cX$,  $L$ is the Lipschitz constant of $f_{i,t}$, and $P_T=\sum_{t=1}^Td(u_{t+1},u_t)$ is the path variation of the comparators $\{u_t\}$. 
\end{theorem}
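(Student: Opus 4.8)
The plan is to control the dynamic regret round by round, splitting each gap $\frac{1}{n}\sum_i f_t(x_{i,t})-f_t(u_t)$ into a consensus (network) part and an optimization (tracking) part. Introducing the \Frechet mean $\bar x_t$ of $\{x_{i,t}\}$ and using $f_t=\frac{1}{n}\sum_j f_{j,t}$, I would write
\[
\frac{1}{n}\sum_i f_t(x_{i,t})-f_t(u_t)=\frac{1}{n}\sum_i\bigl(f_t(x_{i,t})-f_t(\bar x_t)\bigr)+\frac{1}{n}\sum_j\bigl(f_{j,t}(\bar x_t)-f_{j,t}(x_{j,t})\bigr)+\frac{1}{n}\sum_j\bigl(f_{j,t}(x_{j,t})-f_{j,t}(u_t)\bigr).
\]
The first two groups are pure network errors: since each $f_{i,t}$, and hence $f_t$, is $L$-Lipschitz, every summand is at most $L\,d(x_{i,t},\bar x_t)$, which Lemma \ref{lma:4} bounds by $\eta\sigma_2(W)\sqrt{n}L^2/(1-\sigma_2(W))$. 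Summing the two groups over $t$ produces exactly the $\eta T\cdot 2\sigma_2(W)\sqrt{n}L^2/(1-\sigma_2(W))$ contribution to $C$.

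The heart of the proof is the third group, which I would treat with a Riemannian projected-gradient inequality. Writing $\tilde y_{i,t+1}=\Exp_{x_{i,t}}(-\eta\nabla_{i,t})$ for the pre-projection iterate so that $y_{i,t+1}=\cP_{\cX}(\tilde y_{i,t+1})$, I apply the comparison Lemma \ref{lma:2} to the geodesic triangle with vertices $x_{i,t},\tilde y_{i,t+1},u_t$ and angle taken at $x_{i,t}$; since $\Log_{x_{i,t}}(\tilde y_{i,t+1})=-\eta\nabla_{i,t}$, the cross term becomes $-\eta\inner{\nabla_{i,t}}{\Log_{x_{i,t}}(u_t)}$, yielding
\[
d^2(\tilde y_{i,t+1},u_t)\leq d^2(x_{i,t},u_t)+\zeta(\kappa,d(x_{i,t},u_t))\,\eta^2\norm{\nabla_{i,t}}^2+2\eta\inner{\nabla_{i,t}}{\Log_{x_{i,t}}(u_t)}.
\]
Because $u_t\in\cX$, nonexpansiveness of the projection (Lemma \ref{lma:1}) lets me replace $\tilde y_{i,t+1}$ by $y_{i,t+1}$ on the left. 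Combining with geodesic convexity, $f_{i,t}(x_{i,t})-f_{i,t}(u_t)\leq-\inner{\nabla_{i,t}}{\Log_{x_{i,t}}(u_t)}$, gives
\[
2\eta\bigl(f_{i,t}(x_{i,t})-f_{i,t}(u_t)\bigr)\leq d^2(x_{i,t},u_t)-d^2(y_{i,t+1},u_t)+\zeta(\kappa,d(x_{i,t},u_t))\,\eta^2\norm{\nabla_{i,t}}^2.
\]
Using $\norm{\nabla_{i,t}}\leq L$ and $d(x_{i,t},u_t)\leq D$ (both lie in $\cX$), then averaging over $i$ and summing over $t$, the accumulated squared-distance differences are bounded via the tracking-error Lemma \ref{lma:5} by $D^2+2DP_T$, while the curvature remainder is at most $\eta^2 T L^2\zeta(\kappa,D)$. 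Dividing by $2\eta$ gives the tracking contribution $\frac{1}{2\eta}(D^2+2DP_T)+\frac{1}{2}\eta T L^2\zeta(\kappa,D)$.

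Adding the network and tracking pieces, and using that $\zeta(\kappa,\cdot)\geq 1$ is increasing (so $\frac{1}{2}\zeta(\kappa,D)\leq\zeta(\kappa,D+\eta L)$, which one may also obtain directly from $d(\tilde y_{i,t+1},u_t)\leq D+\eta L$) together with $\frac{1}{2\eta}\leq\frac{1}{\eta}$, collapses the constants into the stated $C$ and the term $\frac{1}{\eta}(D^2+2DP_T)$. The main obstacle is the gradient step: the geodesic triangle must be arranged so that the curvature factor $\zeta$ attaches to the small term $\eta^2\norm{\nabla_{i,t}}^2$ rather than to $d^2(x_{i,t},u_t)$, and one must pass carefully from the pre-projection point $\tilde y_{i,t+1}$ to the projected $y_{i,t+1}$ via Lemma \ref{lma:1} so that Lemma \ref{lma:5} applies verbatim. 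By comparison, the Lipschitz/network bookkeeping and the final constant collapse are routine.
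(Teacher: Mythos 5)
Your proposal is correct and follows essentially the same route as the paper: the same split into two Lipschitz/network-error terms controlled by Lemma \ref{lma:4} plus a local-regret term handled via geodesic convexity, the comparison Lemma \ref{lma:2} applied at $x_{i,t}$, nonexpansiveness of $\cP_{\cX}$, and the telescoping bound of Lemma \ref{lma:5}. The only differences are cosmetic (you evaluate $\zeta$ at $d(x_{i,t},u_t)\leq D$ rather than $D+\eta L$ and retain the factors of $1/2$ before relaxing to the stated constant), so the argument matches the paper's proof.
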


The upper bounds consists of two components. The term $\eta TC$ reflects the network connectivity, while the term $(D^2+2DP_T)/\eta$ captures the nonstationarity $P_T$ of the comparators. As $\eta$ decreases, the first term reduces, but the second term increases, illustrating  the trade-off between achieving better consensus (smaller $\eta$) and managing nonstationarity (larger $\eta$). To balance this trade-off, we can choose a suitable stepsize, leading to the following corollary.

\begin{corollary}\label{corollary:1}
    Assume the same conditions as in Theorem \ref{thm:1} hold. Let the stepsize $\eta=\sqrt{D^2+2DP_T}/\sqrt{TC'}\leq 1$, where $C'$ is the constant $C$ defined in Theorem \ref{thm:1} with $\eta=1$. Then we  achieve the following dynamic regret bound:
    \$
    \textnormal{D-Regret}(\{x_{i,t}\})= \cO\left(\sqrt{\frac{T(1+P_T)}{1-\sigma_2(W)}}\right),
    \$
    which is sublinear when $P_T=o(T)$.
\end{corollary}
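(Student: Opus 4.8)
The plan is to substitute the prescribed stepsize into the scalar bound of Theorem~\ref{thm:1} and balance the two terms. The one wrinkle is that the constant $C$ itself depends on $\eta$ through $\zeta(\kappa, D+\eta L) = \sqrt{|\kappa|}(D+\eta L)\coth(\sqrt{|\kappa|}(D+\eta L))$, so I cannot treat it as fixed when optimizing over $\eta$. First I would record that $g(x) = x\coth(x)$ is increasing on $(0,\infty)$; since $\zeta(\kappa,c) = g(\sqrt{|\kappa|}\,c)$, the map $\zeta(\kappa,\cdot)$ is increasing in its second argument. The hypothesis $\eta \leq 1$ then gives $D + \eta L \leq D + L$, whence $C \leq C'$, where $C'$ is $C$ evaluated at $\eta = 1$. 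Replacing $C$ by the $\eta$-free upper bound $C'$ decouples the stepsize from the constant; this monotonicity is the only analytic input beyond the bound of Theorem~\ref{thm:1}.

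With $C \leq C'$, Theorem~\ref{thm:1} gives $\textnormal{D-Regret}(\{x_{i,t}\}) \leq \eta TC' + \eta^{-1}(D^2 + 2DP_T)$. Next I would plug in $\eta = \sqrt{D^2+2DP_T}/\sqrt{TC'}$. A direct computation shows that both summands collapse to $\sqrt{TC'(D^2+2DP_T)}$, so the total is $2\sqrt{TC'(D^2+2DP_T)}$. This is exactly the minimum of the trade-off $\eta a + \eta^{-1}b$ attained at $\eta = \sqrt{b/a}$, with $a = TC'$ and $b = D^2+2DP_T$, which is why this particular stepsize is selected.

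Finally I would read off the claimed order. Treating $L$, $\kappa$, $D$, and $n$ as fixed, the definition $C' = L^2\zeta(\kappa, D+L) + 2\sigma_2(W)\sqrt{n}L^2/(1-\sigma_2(W))$ gives $C' = \cO(1/(1-\sigma_2(W)))$, since the second summand carries the entire $\sigma_2(W)$-dependence and $\sigma_2(W) < 1$; similarly $D^2 + 2DP_T = \cO(1 + P_T)$. Substituting yields $\textnormal{D-Regret}(\{x_{i,t}\}) = \cO(\sqrt{T(1+P_T)/(1-\sigma_2(W))})$. For sublinearity, $P_T = o(T)$ forces $1 + P_T = o(T)$, hence $T(1+P_T) = o(T^2)$ and $\sqrt{T(1+P_T)} = o(T)$.

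I do not expect a genuine obstacle: the corollary is an optimization of the one-dimensional bound from Theorem~\ref{thm:1} over the stepsize. The only subtlety requiring care is the $\eta$-dependence of $C$, and the monotonicity of $x\coth(x)$ together with the constraint $\eta \leq 1$ is precisely what licenses the clean substitution of $C'$ for $C$ before balancing the two terms.
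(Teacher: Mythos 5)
Your proposal is correct and follows the same route as the paper, which simply substitutes the prescribed stepsize into the bound of Theorem \ref{thm:1}. You are in fact more careful than the paper's one-line proof: you explicitly justify replacing the $\eta$-dependent constant $C$ by $C'$ via the monotonicity of $x\coth(x)$ and the hypothesis $\eta\leq 1$, which is exactly the detail the corollary's phrasing implicitly relies on.
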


\begin{proof}
    This corollary immediately follows from Theorem \ref{thm:1} by substituting the stepsize into the regret bound. 
\end{proof}

When the manifold $\cM$ is a Euclidean space, our dynamic regret bound recovers the result in \cite{shahrampour2017distributed}. Notably, $1-\sigma_2(W)$ and $P_T$ characterize how network topology and nonstationarity affect the regret bound. By considering specific settings, our results reduce to the following conclusions:
\begin{enumerate}
\item {\bf Stationary Setting:} When $P_T=0$, this corresponds to the stationary setting,  and our result implies a regret bound of $\cO(\sqrt{T/(1-\sigma_2(W))})$. This is the first decentralized online geodesically convex optimization algorithm with a sublinear regret.

\item {\bf Centralized Scenario:} When $W=(1/n)$ such that $\sigma_2(W)=0$, the problem reduces to the centralized scenario. Our results then recover the dynamic regret bound $\cO(\sqrt{T(1+P_T)})$, as established in \cite{hu2023minimizing}. This regret bound  is minimax optimal,  as shown in \cite[Theorem 3]{hu2023minimizing}.  

\end{enumerate}

\subsection{Limitations}\label{sec:3.4}


Our decentralized projected Riemannian gradient descent algorithm has several limitations. First, as mentioned in Sections \ref{sec:2.2} and \ref{sec:2.3}, we assume that the weighted \Frechet mean problem \eqref{alg:II} are  efficiently solvable and the exact minimizers can be obtained. While we can use the optimization algorithm in \cite{bacak2014computing} or Riemannian gradient descent for this computation, it remains computationally expensive per iteration. To address this issue, we develop a computationally efficient consensus mechanism in Section \ref{sec:opt-free}. Second, it is also assumed that the projector $\cP_{\cX}$ in \eqref{alg:I} is efficiently solvable. This may exhibit the similar computational challenge as in the consensus step \eqref{alg:II}. 
Third, in Corollary \ref{corollary:1}, the optimal stepsize is contingent on the nonstationarity $P_T$, which is generally unavailable in practice. Therefore, it is crucial to develop an adaptive algorithm to achieve the optimal rate without prior knowledge of $P_T$ \cite{zhang2018adaptive}. 
However, even in Euclidean spaces, designing such an adaptive algorithm remains an open problem \cite{shahrampour2017distributed}, as existing approaches require knowledge of the global functions \cite{zhang2018adaptive,hu2023minimizing}, which is unavailable in decentralized settings. We leave this challenge for future study.
\section{A Decentralized Algorithm with a Computationally Efficient Consensus Step}\label{sec:opt-free}



In this section, we address the first limitation in Section \ref{sec:3.4} by introducing a computationally efficient consensus step with a closed-form update.  Given a weight matrix $W=(w_{ij})$ and a set of points $\{y_{i,t+1}\}_{i=1}^n$, our consensus step proceeds as follows:
\#\label{equ:4.1}
x_{i,t+1}=\Exp_{y_{i,t+1}}\left(\gamma\sum_{j=1}^nw_{ij}\Log_{y_{i,t+1}}y_{j,t+1}\right),\quad\forall i=1,\ldots,n,
\#
where $\gamma>0$ is the stepsize. The above consensus step first maps $\{y_{j,t+1}\}$ to the tangent space $T_{y_{i,t+1}}\cM$ at $y_{i,t+1}$, then computes the weighted average of these tangent vectors, and finally maps the result back to the manifold. It provides  a first-order approximation to the weighted \Frechet mean in \eqref{alg:II}. Additionally, the decentralized nature is preserved, as each agent $i$ only receives information from connected agents, those $j$ for which $w_{ij}>0$. Furthermore, we show that this one-step consensus step also exhibits linear variance reduction with an appropriate choice of $\gamma$. This is formally presented in the following lemma.

\begin{lemma}[Linear variance contraction in consensus step \eqref{equ:4.1}]\label{lma:4.1} 
Let $\cM$ be a Hadamard manifold with sectional curvatures lower bounded by some $\kappa<0$. Let $\{y_{i,t+1}\}_{i=1}^n$ be $n$ points in a bounded convex region $\cX\subseteq\cM$ and $\{x_{i,t+1}\}_{i=1}^n$ be the transformed $n$ points via \eqref{equ:4.1}, where $W=(w_{ij})$ is a symmetric doubly stochastic matrix. If  $\gamma=1/(2C_3)$ with $C_3=\zeta(\kappa,2D)$ and $D=\sup_{x,y\in\cX}d(x,y)$, then we have
\$
\VF(\{x_{i,t+1}\})\leq \left(1-\frac{1-\sigma_2(W)}{2C_3}\right)\VF(\{y_{i,t+1}\}),
\$
where $\VF(\cdot)$ is the \Frechet variance defined in \eqref{equ:VF}, $\zeta(\kappa,c)=\sqrt{|\kappa|}c\coth(\sqrt{|\kappa|}c)$, and $\sigma_2(W)$ is the second largest singular value of $W$.
\end{lemma}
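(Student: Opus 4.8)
The plan is to control the output variance by comparing each $x_{i,t+1}$ to $\bar y$, the \Frechet mean of the inputs $\{y_{i,t+1}\}$, and then to reduce everything to two Euclidean-flavored estimates: a curvature-corrected ``descent'' inequality and a spectral-gap inequality. Write $y_i=y_{i,t+1}$, $x_i=x_{i,t+1}$, $v_i=\sum_j w_{ij}\Log_{y_i}y_j$, and let $\bar y$ be the \Frechet mean of $\{y_i\}$; note that $x_i=\Exp_{y_i}(\gamma v_i)$ is exactly one Riemannian gradient step (from $y_i$, stepsize $\gamma$) on the weighted variance $z\mapsto\frac12\sum_j w_{ij}d^2(z,y_j)$. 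Since $\VF(\{x_i\})\le\frac1n\sum_i d^2(\bar y,x_i)$ and $\VF(\{y_i\})=\frac1n\sum_i d^2(\bar y,y_i)$, it suffices to bound $\frac1n\sum_i d^2(\bar y,x_i)$. I would apply Lemma \ref{lma:2} to each geodesic triangle $(\bar y,y_i,x_i)$ with apex $y_i$, taking the side to $\bar y$ as $c$ and the step $d(y_i,x_i)=\gamma\norm{v_i}$ as $b$; because $\bar y,y_i\in\cX$ the curvature factor $\zeta(\kappa,d(y_i,\bar y))$ is at most $\zeta(\kappa,D)\le\zeta(\kappa,2D)=C_3$, the larger argument $2D$ safely dominating every triangle appearing in the argument (e.g.\ vertices $x_i$ that may leave $\cX$, for which $d(x_i,\bar y)\le\gamma\norm{v_i}+D\le 2D$). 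Identifying $bc\cos A=\inner{v_i}{\Log_{y_i}\bar y}$ and averaging over $i$ yields
\[
\VF(\{x_i\})\ \le\ \VF(\{y_i\})+\gamma^2 C_3\,S_1-2\gamma\,S_2,
\]
where $S_1=\frac1n\sum_i\norm{v_i}^2$ and $S_2=\frac1n\sum_i\inner{v_i}{\Log_{y_i}\bar y}$.

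The crucial move is to route both $S_1$ and $S_2$ through the single quantity $A=\frac1n\sum_{i,j}w_{ij}d^2(y_i,y_j)$ rather than estimating them separately. Jensen's inequality on $v_i=\sum_j w_{ij}\Log_{y_i}y_j$ gives $\norm{v_i}^2\le\sum_j w_{ij}d^2(y_i,y_j)$, hence $S_1\le A$. For $S_2$ I would invoke the nonpositive-curvature (CAT$(0)$) estimate $d(y_j,\bar y)\ge\norm{\Log_{y_i}y_j-\Log_{y_i}\bar y}$, i.e.\ that the exponential map on a Hadamard manifold is expanding, which rearranges to $2\inner{\Log_{y_i}y_j}{\Log_{y_i}\bar y}\ge d^2(y_i,y_j)+d^2(y_i,\bar y)-d^2(y_j,\bar y)$. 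Weighting by $w_{ij}$, averaging over $i$, and using double stochasticity to cancel the last two groups of terms yields $2S_2\ge A$. Therefore
\[
\gamma^2 C_3\,S_1-2\gamma\,S_2\ \le\ \gamma^2 C_3\,A-\gamma A\ =\ \gamma(\gamma C_3-1)\,A,
\]
which is a \emph{negative} multiple of $A$ once $\gamma=1/(2C_3)$, since then $\gamma C_3-1=-\tfrac12$.

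It remains to lower bound $A$ by the spectral gap. Lifting to the tangent space at $\bar y$ via $u_i=\Log_{\bar y}y_i$, the first-order optimality of the \Frechet mean gives $\sum_i u_i=0$, and the expanding property again gives $d(y_i,y_j)\ge\norm{u_i-u_j}$, so $A\ge\frac1n\sum_{i,j}w_{ij}\norm{u_i-u_j}^2$. This last expression is purely Euclidean in $T_{\bar y}\cM$; expanding it and using that $W$ restricted to $\mathbf 1^\perp$ has operator norm $\sigma_2(W)$, together with $\sum_i u_i=0$ and $\frac1n\sum_i\norm{u_i}^2=\VF(\{y_i\})$, gives $A\ge 2(1-\sigma_2(W))\VF(\{y_i\})$. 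Substituting this and $\gamma=1/(2C_3)$ into the previous display produces $\gamma(\gamma C_3-1)A\le-\frac{1-\sigma_2(W)}{2C_3}\VF(\{y_i\})$, which combined with the descent inequality is exactly the claimed contraction factor $1-(1-\sigma_2(W))/(2C_3)$.

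The main obstacle is using the curvature bound in two opposite directions consistently: the descent inequality needs the curvature \emph{lower} bound through the $\zeta$-correction of Lemma \ref{lma:2}, whereas the inner-product lower bound behind $2S_2\ge A$ and the distance comparison $d(y_i,y_j)\ge\norm{u_i-u_j}$ behind the spectral-gap step both rely on \emph{nonpositive} curvature (the expanding exponential map). Obtaining a genuine contraction hinges precisely on not bounding $S_1$ and $S_2$ independently but funneling both through $A$, so that the $\gamma^2 C_3 S_1$ loss is dominated by the $-2\gamma S_2$ gain; this is what forces the choice $\gamma=1/(2C_3)$ and explains why the rate is $1-(1-\sigma_2(W))/(2C_3)$ rather than its Euclidean square.
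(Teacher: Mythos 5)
Your proposal is correct and follows essentially the same route as the paper's proof: the curvature-corrected law of cosines (Lemma \ref{lma:2}) applied at each $y_{i,t+1}$ to control the one-step update, Cauchy--Schwarz/Jensen to bound $d^2(x_{i,t+1},y_{i,t+1})$ by $\gamma^2\sum_j w_{ij}d^2(y_{i,t+1},y_{j,t+1})$, the reverse (CAT$(0)$) cosine inequality to lower bound the cross term by the weighted pairwise sum $A$, and finally the spectral-gap bound $A\geq 2(1-\sigma_2(W))\VF(\{y_{i,t+1}\})$ obtained by lifting to $T_{\bar y_{t+1}}\cM$ and using $\sum_i\Log_{\bar y_{t+1}}y_{i,t+1}=0$. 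The only cosmetic difference is which side of the triangle feeds the $\zeta$ factor before it is uniformly bounded by $C_3=\zeta(\kappa,2D)$; the constants and the final contraction rate coincide.
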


Lemma \ref{lma:4.1} imposes additional constraints compared to Lemma \ref{lma:3}, specifically lower sectional curvature bounds for $\cM$ and the boundedness of the points $\{y_{i,t+1}\}_{i=1}^n$. These assumptions ensure the proper behavior of the Riemannian gradient descent step in \eqref{equ:4.1} and enable the application of Lemma \ref{lma:2} in the analysis. Moreover, the contraction constant in Lemma \ref{lma:4.1},  $1-(1-\sigma_2(W))/(2C_3)$, is greater than the contraction constant in  Lemma \ref{lma:3}, which is $\sigma_2^2(W)$. This indicates that the above closed-form consensus mechanism requires more iterations to reach a given consensus level compared to the weighted \Frechet mean-based approach.

This trade-off underscores the balance between computational efficiency per round and the total number of rounds required for convergence. In an online learning context, where real-time processing and adaptability are crucial, the closed-form consensus mechanism is preferable as it improves the per-round computational efficiency. Therefore, in the following section, we integrate this closed-form consensus mechanism into the decentralized algorithm from Section \ref{sec:2.3} and demonstrate its effectiveness by establishing the same dynamic regret bounds as in Corollary \ref{corollary:1}.

\subsection{Improved Decentralized Algorithm and Theoretical Analysis}

Consider the same decentralized optimization setting as in Section \ref{sec:2}. Our improved decentralized projected Riemannian gradient descent (iDPRGD) modifies the original algorithm by replacing the consensus step \eqref{alg:II} with the closed-form consensus step \eqref{equ:4.1}. Specifically, we initialize $x_{i,1}=x_1\in\cX$ for all $i$. At time $t$, each agent receives a function $f_{i,t}$, and the following updates are executed:
\#
y_{i,t+1}&=\cP_{\cX}(\Exp_{x_{i,t}}(-\eta \nabla_{i,t})),\quad \nabla_{i,t}=\nabla f_{i,t}(x_{i,t}),\tag{III}\label{alg:III}\\
x_{i,t+1}&=\Exp_{y_{i,t+1}}\left(\gamma\sum_{j=1}^nw_{ij}\Log_{y_{i,t+1}}y_{j,t+1}\right), \tag{IV}\label{alg:IV}
\#
where $\cP_{\cX}$ is the projection operator onto $\cX$, $\eta$, $\gamma>0$ are the stepsizes, and $W=(w_{ij})$ is the weight matrix. By adopting the closed-form consensus step \eqref{alg:IV}, we avoid solving an additional minimization problem within the inner loop of the decentralized algorithm, significantly increasing computational speed, as  demonstrated in the numerical results section. 

In the remainder of this section, we establish the dynamic regret bound for this improved decentralized algorithm. Our analytical framework parallels that of Section \ref{sec:3}, with two key distinctions: first, we employ Lemma \ref{lma:4.1} to control the network error, and second, we use slightly different reasoning to bound the tracking error. The following lemmas summarize  these results, with their proofs deferred to the appendix.

\begin{lemma}[Network error II]\label{lma:4.2}
    Consider the problem setting in Section \ref{sec:2.2} and suppose the conditions in Section \ref{sec:2.3} hold. Let $\{x_{i,t}\}$ be the iterates  from the improved decentralized algorithm \eqref{alg:III} and \eqref{alg:IV} with $\gamma=1/(2C_3)$, where $C_3=\zeta(\kappa,2D)$, $\zeta(\kappa,c)$ is defined in Lemma \ref{lma:4.1}, and $D$ is the diameter of $\cX$. Then we have
    \$
    d(x_{i,t},\bar x_t)\leq \frac{4C_3\eta\sqrt{n}L}{1-\sigma_2(W)},
    \$
    where $\bar x_t$ is the \Frechet mean of $\{x_{i,t}\}_{i=1}^n$ and $\sigma_2(W)<1$ is the second largest singular value of $W$. 
\end{lemma}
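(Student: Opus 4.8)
The plan is to mirror the proof of Lemma~\ref{lma:4} (Network error) but with the closed-form consensus step \eqref{alg:IV} in place of the weighted \Frechet mean, using Lemma~\ref{lma:4.1} rather than Lemma~\ref{lma:3} to drive the contraction. The target quantity $d(x_{i,t},\bar x_t)$ is controlled by the \Frechet variance $\VF(\{x_{i,t}\})$, since every individual deviation from the \Frechet mean is bounded by a multiple of $\sqrt{n\,\VF(\{x_{i,t}\})}$; concretely $d(x_{i,t},\bar x_t)^2 \le \sum_j d(x_{j,t},\bar x_t)^2 = n\,\VF(\{x_{i,t}\})$. So it suffices to establish a uniform bound on $\VF(\{x_{i,t}\})$ along the trajectory, and the statement will follow by taking square roots.

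First I would set up a one-step recursion for the \Frechet variance. Each round consists of the projected Riemannian gradient descent step \eqref{alg:III}, which perturbs each $x_{i,t}$ into $y_{i,t+1}$ by at most $d(y_{i,t+1},x_{i,t}) \le \eta\|\nabla_{i,t}\| \le \eta L$ (using Lemma~\ref{lma:1} and the $L$-Lipschitz assumption exactly as in the discussion following Lemma~\ref{lma:1}), followed by the consensus step \eqref{alg:IV}, whose effect is exactly the linear contraction from Lemma~\ref{lma:4.1}. The gradient step can inflate the standard deviation $\sqrt{\VF}$ by at most the perturbation size: writing $\nu_t = \sqrt{\VF(\{x_{i,t}\})}$, the triangle-inequality behavior of the \Frechet mean gives $\sqrt{\VF(\{y_{i,t+1}\})} \le \nu_t + \eta L$ (each point moves by at most $\eta L$, so the variance about any fixed center moves accordingly). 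Then Lemma~\ref{lma:4.1} contracts: with $\rho := 1-(1-\sigma_2(W))/(2C_3)$,
\#
\nu_{t+1} = \sqrt{\VF(\{x_{i,t+1}\})} \le \sqrt{\rho}\,\sqrt{\VF(\{y_{i,t+1}\})} \le \sqrt{\rho}\,(\nu_t + \eta L).
\#
Since $x_{i,1}=x_1$ for all $i$, we have $\nu_1 = 0$, and iterating this affine recursion yields the geometric-series bound $\nu_t \le \eta L \sqrt{\rho}/(1-\sqrt{\rho})$ for all $t$.

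The remaining step is to convert this into the stated constant. Using $d(x_{i,t},\bar x_t) \le \sqrt{n}\,\nu_t \le \sqrt{n}\,\eta L\sqrt{\rho}/(1-\sqrt{\rho})$ and then simplifying $\sqrt{\rho}/(1-\sqrt{\rho})$; bounding $1-\sqrt{\rho} \ge 1-\rho = (1-\sigma_2(W))/(2C_3)$ and $\sqrt{\rho}\le 1$ gives $d(x_{i,t},\bar x_t) \le 2C_3\eta\sqrt{n}L/(1-\sigma_2(W))$, and a less tight passage through the same estimates recovers the stated $4C_3\eta\sqrt{n}L/(1-\sigma_2(W))$. The main obstacle I anticipate is the variance-perturbation inequality $\sqrt{\VF(\{y_{i,t+1}\})} \le \sqrt{\VF(\{x_{i,t}\})} + \eta L$: in the Euclidean case this is the elementary fact that translating points by vectors of norm $\le \eta L$ changes the standard deviation by at most $\eta L$, but on a Hadamard manifold one must justify it carefully, presumably by evaluating the \Frechet functional at the old center and invoking the triangle inequality $d(y_{i,t+1},\bar x_t) \le d(y_{i,t+1},x_{i,t}) + d(x_{i,t},\bar x_t)$ together with the defining minimality of $\VF$. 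Everything else is either a direct appeal to the earlier lemmas or a routine geometric-series summation.
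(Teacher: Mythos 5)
Your overall strategy is exactly the paper's: bound $d(x_{i,t},\bar x_t)$ by $\sqrt{n\VF(\{x_{i,t}\})}$, derive the one-step recursion $\sqrt{\VF(\{y_{i,t+1}\})}\leq\sqrt{\VF(\{x_{i,t}\})}+\eta L$ for the gradient step, contract via Lemma \ref{lma:4.1}, and sum the geometric series from $\VF(\{x_{i,1}\})=0$. Two details need repair. First, your final conversion uses $1-\sqrt{\rho}\geq 1-\rho$, which is false for $\rho\in(0,1)$ (there $\sqrt{\rho}\geq\rho$, so the inequality goes the other way); the correct estimate is $1-\sqrt{\rho}\geq(1-\rho)/2$, equivalently $\sqrt{1-x}\leq 1-x/2$, which is what the paper uses and which yields precisely the stated constant $4C_3\eta\sqrt{n}L/(1-\sigma_2(W))$. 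Your claimed sharper constant $2C_3$ is therefore not justified, though the lemma as stated still follows.

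Second, your justification of the perturbation inequality rests on $d(y_{i,t+1},x_{i,t})\leq\eta L$, which you import from the discussion after Lemma \ref{lma:1}; that argument requires $x_{i,t}\in\cX$. For the original algorithm this holds because the weighted \Frechet mean of points in $\cX$ stays in $\cX$, but for iDPRGD the consensus step \eqref{alg:IV} is a tangent-space average followed by $\Exp$, and the paper does not claim (and elsewhere does not assume --- note the $2D$ bounds in the proof of Lemma \ref{lma:4.3}) that $x_{i,t+1}$ remains in $\cX$. The paper sidesteps this by working with the pre-projection points $z_{i,t+1}=\Exp_{x_{i,t}}(-\eta\nabla_{i,t})$, for which $d(z_{i,t+1},x_{i,t})=\eta\norm{\nabla_{i,t}}\leq\eta L$ holds exactly, and then using the full non-expansiveness of $\cP_{\cX}$ in the form $\sum_i d^2(\bar y_{t+1},y_{i,t+1})\leq\sum_i d^2(\cP_{\cX}(\bar z_{t+1}),\cP_{\cX}(z_{i,t+1}))\leq\sum_i d^2(\bar z_{t+1},z_{i,t+1})\leq\sum_i d^2(\bar x_t,z_{i,t+1})$. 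You should route your perturbation bound through $z_{i,t+1}$ in the same way; with these two fixes your argument coincides with the paper's proof.
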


The upper bound established in Lemma \ref{lma:4.2} exhibits a linear dependence on the stepsize $\eta$. By employing a diminishing stepsize sequence, we can ensure that the network error diminishes accordingly. This convergence property plays a crucial role in our dynamic regret analysis.

\begin{lemma}[Tracking error II]\label{lma:4.3}
    Consider the problem setting in Section \ref{sec:2.2} and assume the conditions in Section \ref{sec:2.4} hold. Let $\{x_{i,t},y_{i,t+1}\}$ be the iterates generated by the improved decentralized algorithm \eqref{alg:III} and \eqref{alg:IV} with $\gamma$ defined in Lemma \ref{lma:4.2}. Let $\{u_t\}$ be a sequence of points in the convex set $\cX$. Then it holds that
    \$
    \frac{1}{n}\sum_{i=1}^n\sum_{t=1}^T\left(d^2(u_t,x_{i,t})-d^2(u_t,y_{i,t+1})\right)\leq D^2+4DP_T,
    \$
    where $D$ is the diameter of $\cX$ and $P_T=\sum_{t=1}^Td(u_{t+1},u_t)$. 
\end{lemma}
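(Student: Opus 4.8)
The plan is to prove Lemma \ref{lma:4.3} by adapting the tracking-error argument of Lemma \ref{lma:5} to the closed-form consensus step \eqref{alg:IV}. The key quantity to control is, for each fixed comparator $u_t\in\cX$ and each agent $i$, the telescoping difference $d^2(u_t,x_{i,t})-d^2(u_t,y_{i,t+1})$. I would start by splitting this across the consensus step and the gradient step: write
\$
d^2(u_t,x_{i,t})-d^2(u_t,y_{i,t+1}) = \left(d^2(u_t,x_{i,t})-d^2(u_t,y_{i,t})\right) + \left(d^2(u_t,y_{i,t})-d^2(u_t,y_{i,t+1})\right).
\$
Wait---the cleaner decomposition is to relate $y_{i,t+1}$ (the gradient step applied to $x_{i,t}$) back to $x_{i,t}$, and separately relate $x_{i,t+1}$ (the consensus step applied to $y_{\cdot,t+1}$) back to $y_{i,t+1}$. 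So I would instead bound the summand by comparing $d^2(u_t,x_{i,t})$ to $d^2(u_t,y_{i,t+1})$ directly through the intermediate point obtained from $x_{i,t-1}$, and set up a telescoping sum over $t$ after summing over $i$.

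The main mechanism is that, after averaging over $i=1,\ldots,n$, the consensus step \eqref{alg:IV} should be nonexpansive (or nearly so) with respect to the sum of squared distances to any fixed point $u_t\in\cX$. Concretely, I expect to establish a ``consensus contraction toward a comparator'' inequality of the form $\frac1n\sum_i d^2(u_t,x_{i,t+1})\le \frac1n\sum_i d^2(u_t,y_{i,t+1})$, so that only the gradient-step contribution survives. For the Fréchet-mean consensus this followed from convexity of $d^2(u_t,\cdot)$ and double stochasticity; for the closed-form step \eqref{alg:IV} I would verify the analogous property using Lemma \ref{lma:2} to compare $d^2(u_t,x_{i,t+1})$ against $d^2(u_t,y_{i,t+1})$ along the geodesic defined by the tangent vector $\gamma\sum_j w_{ij}\Log_{y_{i,t+1}}y_{j,t+1}$, exploiting that $u_t$ lies in the bounded convex set $\cX$. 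With this in hand, the residual is governed entirely by the gradient step \eqref{alg:III}, where I would use the nonexpansiveness of $\cP_{\cX}$ (Lemma \ref{lma:1}) and the relation $d(y_{i,t+1},x_{i,t})\le \eta\norm{\nabla_{i,t}}\le \eta L$.

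Next I would handle the shift of comparators. After telescoping $\sum_t\big(\frac1n\sum_i d^2(u_t,x_{i,t})-\frac1n\sum_i d^2(u_t,x_{i,t+1})\big)$, the comparator index changes between consecutive terms, producing an error of the form $\frac1n\sum_i\big(d^2(u_{t+1},x_{i,t+1})-d^2(u_t,x_{i,t+1})\big)$. I would bound this using the Lipschitz property of $d^2(\cdot,z)$ over $\cX$: since $|d^2(u_{t+1},z)-d^2(u_t,z)|\le 2D\,d(u_{t+1},u_t)$ for $z\in\cX$ (because the states remain in $\cX$ and $\cX$ has diameter $D$), the total comparator-shift contribution is at most $2D\sum_t d(u_{t+1},u_t)=2D P_T$. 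The constant $D^2$ comes from the initial boundary term $\frac1n\sum_i d^2(u_1,x_{i,1})\le D^2$.

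The main obstacle will be establishing the consensus nonexpansiveness toward $u_t$ for the closed-form step \eqref{alg:IV}, which is less immediate than for the exact Fréchet mean: the single-step tangent-space update is only a first-order approximation, so the inequality $\frac1n\sum_i d^2(u_t,x_{i,t+1})\le \frac1n\sum_i d^2(u_t,y_{i,t+1})$ may hold only up to a curvature correction controlled by $\zeta(\kappa,2D)$ and the choice $\gamma=1/(2C_3)$. I anticipate that this is precisely where the factor $4DP_T$ (versus $2DP_T$ in Lemma \ref{lma:5}) originates: the curvature-induced slack in the consensus comparison inflates the comparator-shift coefficient from $2D$ to $4D$. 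Carefully accounting for this slack via Lemma \ref{lma:2}, while keeping the gradient-step contribution absorbed into the later regret analysis rather than this lemma, is the delicate part of the argument.
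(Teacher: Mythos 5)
Your overall skeleton matches the paper's: a three-term decomposition into a telescoping part, a comparator-shift part, and a consensus-comparison part, with the consensus comparison handled via Lemma \ref{lma:2} applied to the triangle with vertices $x_{i,t+1}$, $y_{i,t+1}$, $u_t$ and the choice $\gamma=1/(2C_3)$. However, you misattribute the source of the coefficient $4D$, and the misattribution conceals a genuine gap. Your claim that the comparator-shift term costs only $2DP_T$ rests on the assertion that ``the states remain in $\cX$.'' That is not established for the closed-form consensus step \eqref{alg:IV}: unlike the weighted \Frechet mean, the update $x_{i,t+1}=\Exp_{y_{i,t+1}}(\gamma\sum_jw_{ij}\Log_{y_{i,t+1}}y_{j,t+1})$ need not land in $\cX$, since the exponential of an averaged log vector is not a point of the geodesic convex hull in general. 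What one can say is $d(x_{i,t+1},y_{i,t+1})\leq\gamma D\leq D$, hence $d(x_{i,t+1},u)\leq 2D$ for any $u\in\cX$, which is exactly why the paper bounds $d^2(u_{t+1},x_{i,t+1})-d^2(u_t,x_{i,t+1})\leq 4D\,d(u_{t+1},u_t)$. The factor $4D$ is entirely a comparator-shift phenomenon.

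Relatedly, your anticipated mechanism --- that ``curvature-induced slack'' in the consensus comparison inflates the coefficient from $2D$ to $4D$ --- does not occur and is not needed. With $\gamma=1/(2C_3)$ the consensus comparison is exactly nonpositive after summing over $i$: applying Lemma \ref{lma:2} to $(x_{i,t+1},y_{i,t+1},u_t)$, Lemma \ref{lma:a1} to the triangles $(y_{i,t+1},y_{j,t+1},u_t)$, Cauchy--Schwarz to get $d^2(x_{i,t+1},y_{i,t+1})\leq\gamma^2\sum_jw_{ij}d^2(y_{i,t+1},y_{j,t+1})$, and double stochasticity, one obtains $\sum_i\bigl(d^2(u_t,x_{i,t+1})-d^2(u_t,y_{i,t+1})\bigr)\leq(\gamma^2C_3-\gamma)\sum_{i,j}w_{ij}d^2(y_{i,t+1},y_{j,t+1})\leq 0$, with no residual to charge against $P_T$. (Any such residual would in any case be a per-round additive term governed by the dispersion of the $y$'s, so it could not naturally be absorbed into a multiple of the comparator path length.) Once you replace the $2D$ comparator-shift coefficient by $4D$ for the reason above and carry out this nonpositivity computation, your argument goes through and coincides with the paper's; the boundary term $\frac{1}{n}\sum_id^2(u_1,x_{i,1})\leq D^2$ is as you describe.
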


Lemma \ref{lma:4.3} demonstrates that the tracking error depends on the nonstationarity measure $P_T$. Building on this, along with Lemma \ref{lma:4.2}, we can derive the dynamic regret bounds for the improved decentralized algorithm, as detailed in the following theorem. 

\begin{theorem}[Dynamic regret II]\label{thm:2}
    Consider the problem setting in Section \ref{sec:2.2} and assume the conditions in Section \ref{sec:2.4} hold. Let $\{x_{i,t}\}$ be the iterates generated by the improved decentralized algorithm \eqref{alg:III} and \eqref{alg:IV} with $\gamma$ given in Lemma \ref{lma:4.2} and some $\eta\leq 1$. Let $\{u_t\}$ be a sequence of points in the convex set $\cX$. Then we can bound the dynamic regret \eqref{equ:1.2}  as follows:
    \$
    \textnormal{D-Regret}(\{x_{i,t}\})\leq \eta C_4T+\frac{1}{2\eta}(D^2+4DP_T),
    \$
    where $C_4=(8C_3\sqrt{n}L^2)/(1-\sigma_2(W))+\zeta(\kappa,2D+L)L^2/2$, $C_3$, $\sigma_2(W)$, $D$, $\kappa$, and $\zeta(\kappa,c)$ are defined in Lemma \ref{lma:4.1}, $L$ is the Lipschitz constant, and $P_T$ is the path variation defined in Lemma \ref{lma:4.3}. 
\end{theorem}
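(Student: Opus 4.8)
The plan is to combine the network-error bound (Lemma~\ref{lma:4.2}) with the tracking-error bound (Lemma~\ref{lma:4.3}) exactly as Theorem~\ref{thm:1} combined Lemmas~\ref{lma:4} and~\ref{lma:5}. First I would decompose the per-agent instantaneous regret through the \Frechet mean $\bar x_t$ of $\{x_{i,t}\}_{i=1}^n$. Writing
\$
f_t(x_{i,t})-f_t(u_t)=\bigl(f_t(x_{i,t})-f_t(\bar x_t)\bigr)+\bigl(f_t(\bar x_t)-f_t(u_t)\bigr),
\$
the first difference is controlled by the $L$-Lipschitz property and Lemma~\ref{lma:4.2}, giving $f_t(x_{i,t})-f_t(\bar x_t)\le L\,d(x_{i,t},\bar x_t)\le 4C_3\eta\sqrt{n}L^2/(1-\sigma_2(W))$. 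Summing over $i,t$ and dividing by $n$ contributes the network piece, which after accounting for the factor $\sqrt n$ feeds into the first part of $C_4$, namely $8C_3\sqrt n L^2/(1-\sigma_2(W))$.

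Next I would handle the descent term $f_t(\bar x_t)-f_t(u_t)$, or more naturally $f_t(x_{i,t})-f_t(u_t)$ summed over agents, using geodesic convexity of $f_{i,t}$ together with the projected gradient step \eqref{alg:III}. By geodesic convexity, $f_{i,t}(x_{i,t})-f_{i,t}(u_t)\le \langle \nabla_{i,t},-\Log_{x_{i,t}}(u_t)\rangle$, and I would relate this inner product to the squared-distance decrement produced by the exponential step. The key geometric inequality is Lemma~\ref{lma:2}: applying it to the triangle with vertices $u_t$, $x_{i,t}$, and the unprojected point $\Exp_{x_{i,t}}(-\eta\nabla_{i,t})$ yields
\$
d^2\bigl(u_t,\Exp_{x_{i,t}}(-\eta\nabla_{i,t})\bigr)\le \zeta(\kappa,\eta L)\,\eta^2\|\nabla_{i,t}\|^2 + d^2(u_t,x_{i,t}) - 2\eta\langle\nabla_{i,t},-\Log_{x_{i,t}}u_t\rangle,
\$
where I use that the side opposite $u_t$ has length $\eta\|\nabla_{i,t}\|\le \eta L$ and $d(u_t,x_{i,t})\le 2D$ so the $\coth$ argument is controlled by $\zeta(\kappa,2D+L)$ (this explains the $\zeta(\kappa,2D+L)L^2/2$ term in $C_4$). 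The nonexpansiveness of $\cP_{\cX}$ from Lemma~\ref{lma:1} lets me replace the unprojected point by $y_{i,t+1}$ without increasing the distance to $u_t\in\cX$. Rearranging gives
\$
f_{i,t}(x_{i,t})-f_{i,t}(u_t)\le \frac{1}{2\eta}\bigl(d^2(u_t,x_{i,t})-d^2(u_t,y_{i,t+1})\bigr)+\frac{\eta L^2}{2}\zeta(\kappa,2D+L).
\$

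Then I would sum over $i$ and $t$, divide by $n$, and invoke Lemma~\ref{lma:4.3} to bound the telescoped-plus-drift term $\frac1n\sum_{i,t}\bigl(d^2(u_t,x_{i,t})-d^2(u_t,y_{i,t+1})\bigr)\le D^2+4DP_T$; multiplying by $1/(2\eta)$ produces the $\tfrac{1}{2\eta}(D^2+4DP_T)$ term of the theorem. Collecting the network contribution and the $\eta L^2\zeta(\kappa,2D+L)/2$ term (summed over $T$) gives the coefficient $C_4$ times $\eta T$, completing the bound. The main obstacle I anticipate is the careful application of Lemma~\ref{lma:2}: I must verify that the curvature-comparison constant is uniformly controlled by $\zeta(\kappa,2D+L)$, which requires bounding every relevant side length — in particular confirming $d(u_t,x_{i,t})\le D<\infty$ via the invariance of $\cX$ under both the projection and, crucially, under the new closed-form consensus step \eqref{alg:IV}, and bounding the gradient-step side by $\eta L\le L$. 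Establishing that \eqref{alg:IV} keeps iterates in (a bounded neighborhood of) $\cX$ is the one place where the argument genuinely differs from Theorem~\ref{thm:1}, since the closed-form step, unlike the \Frechet mean, does not automatically preserve geodesic convex hulls; I would address this using the network-error control of Lemma~\ref{lma:4.2} to keep the $\coth$ argument bounded.
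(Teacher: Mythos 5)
Your proposal is correct and follows essentially the same route as the paper: decompose through the \Frechet mean $\bar x_t$ and control that piece with Lemma~\ref{lma:4.2} (the doubling to $8C_3$ coming from applying the network bound both to $f_t(x_{i,t})-f_t(\bar x_t)$ and to passing from $f_{i,t}(\bar x_t)$ back to $f_{i,t}(x_{i,t})$), then use geodesic convexity, Lemma~\ref{lma:2} on the triangle $x_{i,t},z_{i,t+1},u_t$ with the $\coth$ argument bounded by $2D+L$, nonexpansiveness of $\cP_{\cX}$, and Lemma~\ref{lma:4.3} to telescope. The only slips are cosmetic: the displayed $\zeta(\kappa,\eta L)$ should already read $\zeta(\kappa,2D+L)$ (you correct this in the same sentence), and the boundedness $d(x_{i,t},u_t)\le 2D$ follows directly from $d(x_{i,t},y_{i,t})\le\gamma D\le D$ via the definition of step \eqref{alg:IV} rather than needing the network-error lemma.
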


The regret bound in Theorem \ref{thm:2} consists of two terms, $\eta C_4T$ and $\frac{1}{2\eta}(D^2+4DP_T)$. The first term depends linearly on $\eta$, while the second term depends inversely on $\eta$, illustrating the trade-off between consensus (small $\eta$) and dynamic adaptation (larger $\eta$). By selecting an optimal $\eta$, we can achieve a sublinear dynamic regret bound, as shown in the following corollary. This regret bound aligns with that of Corollary \ref{corollary:1}, demonstrating the effectiveness of the improved decentralized algorithm. 

\begin{corollary}
    Assume the same conditions as in Theorem \ref{thm:2} hold. Suppose the stepsize is givne by $\eta=\sqrt{D^2+4DP_T}/\sqrt{2C_4T}\leq 1$, where $C_4$ is defined in Theorem \ref{thm:1}. Then, we achieve a dynamic regret bound of order
    \$
    \cO\left(\sqrt{\frac{T(1+P_T)}{1-\sigma_2(W)}}\right).
    \$
\end{corollary}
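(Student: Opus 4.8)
The plan is to derive this corollary directly from Theorem~\ref{thm:2} by optimizing the stepsize, exactly mirroring how Corollary~\ref{corollary:1} follows from Theorem~\ref{thm:1}. Theorem~\ref{thm:2} supplies the bound $\textnormal{D-Regret}(\{x_{i,t}\})\leq \eta C_4 T + \frac{1}{2\eta}(D^2+4DP_T)$ for any $\eta\leq 1$. I would write the right-hand side as $g(\eta)=a\eta + b/\eta$ with $a=C_4T$ and $b=(D^2+4DP_T)/2$, and invoke the elementary (AM--GM) fact that $g$ is minimized over $\eta>0$ at $\eta^*=\sqrt{b/a}$ with minimal value $2\sqrt{ab}$. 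Substituting $a$ and $b$ gives $\eta^*=\sqrt{(D^2+4DP_T)/(2C_4T)}$, which is precisely the prescribed stepsize, and $g(\eta^*)=2\sqrt{ab}=\sqrt{2C_4T(D^2+4DP_T)}$. Before plugging in I would note that the hypothesis $\eta^*\leq 1$ is exactly the feasibility condition required for Theorem~\ref{thm:2} to apply, so no separate verification beyond the stated assumption is needed; this holds whenever $T$ is large enough relative to $(D^2+4DP_T)/C_4$.

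Having obtained $\textnormal{D-Regret}(\{x_{i,t}\})\leq \sqrt{2C_4T(D^2+4DP_T)}$, the remaining task is to translate this into $\mathcal{O}$ notation. Recall $C_4=\frac{8C_3\sqrt{n}L^2}{1-\sigma_2(W)}+\frac{\zeta(\kappa,2D+L)L^2}{2}$, where $C_3=\zeta(\kappa,2D)$ and $\zeta(\kappa,2D+L)$ are constants determined by the curvature lower bound $\kappa$ and the diameter $D$, and carry no dependence on $\sigma_2(W)$. Treating $n$, $L$, $\kappa$, and $D$ as constants, and using that $1/(1-\sigma_2(W))\geq 1$, I would bound the $\sigma_2(W)$-free summand by a constant multiple of $1/(1-\sigma_2(W))$, so that $C_4=\mathcal{O}\!\left(1/(1-\sigma_2(W))\right)$. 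Likewise $D^2+4DP_T=\mathcal{O}(1+P_T)$. Combining, $\sqrt{2C_4T(D^2+4DP_T)}=\mathcal{O}\!\left(\sqrt{T(1+P_T)/(1-\sigma_2(W))}\right)$, which is the claimed rate.

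There is essentially no analytical obstacle here: the argument is the one-line minimization of $a\eta+b/\eta$ followed by constant bookkeeping, structurally identical to Corollary~\ref{corollary:1}. The only point deserving a little care is the order extraction, where one must confirm that the $\sigma_2(W)$-free term $\zeta(\kappa,2D+L)L^2/2$ in $C_4$ is genuinely absorbed into the $1/(1-\sigma_2(W))$ scaling---legitimate because $1-\sigma_2(W)\in(0,1]$---so that the connectivity factor enters with the stated power $1/2$ and no hidden additive term degrades the rate. I would therefore conclude the corollary exactly as stated.
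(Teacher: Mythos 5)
Your proposal is correct and follows exactly the route the paper intends: substitute the prescribed stepsize into the bound of Theorem \ref{thm:2} (equivalently, minimize $\eta C_4 T + (D^2+4DP_T)/(2\eta)$ via AM--GM), then absorb constants using $C_4=\cO(1/(1-\sigma_2(W)))$ and $D^2+4DP_T=\cO(1+P_T)$. This mirrors the paper's one-line justification of Corollary \ref{corollary:1}, and your extra care about the $\eta\leq 1$ feasibility condition and the $\sigma_2(W)$-free summand in $C_4$ is sound.
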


\begin{figure}[!t]
    \centering
    \includegraphics[width=0.3\linewidth]{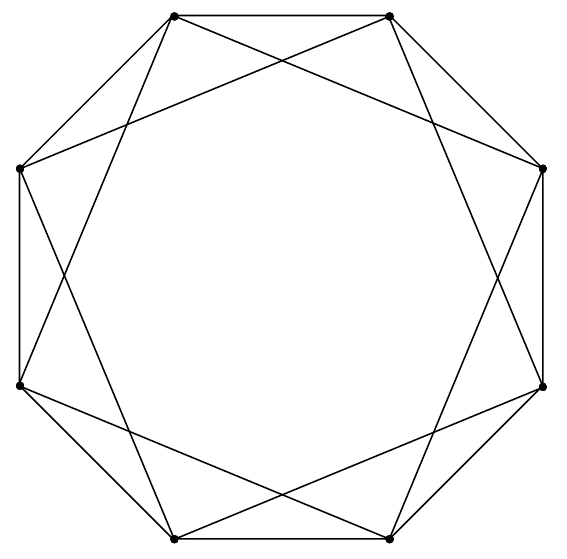}
    \caption{A network of $8$ agents. Each agent is connected to its 4 nearest neighbors.}
    \label{fig:1}
\end{figure}

\section{Numerical Studies}\label{sec:numerical}

In this section, we conduct extensive numerical experiments to demonstrate the effectiveness of the proposed decentralized algorithms. We investigate two Hadamard manifolds: hyperbolic spaces and the space of symmetric positive definite matrices. For each manifold, we focus on  the problem of decentralized \Frechet mean computation. Specifically, we consider a network with $n$ agents, where each agent receives a set of $K$ points $\{z_{i,k}^t\}_{k=1}^K$ at time $t$. The local objective function is the loss function for the \Frechet mean computation, defined as
\$
f_{i,t}(x)=\frac{1}{K}\sum_{k=1}^Kd^2(x,z_{i,k}^t),
\$
and the global function $f_t$ is given by \eqref{equ:global-function}. We assume $\{z_{i,k}^t\}\subseteq\cX$ in a bounded closed geodesically convex set. Therefore, the loss functions satisfy the conditions in Section \ref{sec:2}, such as geodesic convexity and the Lipschitz property over a bounded region. In the following section, we describe the nonstationary decentralized optimization settings in detail. We will examine both the decentralized algorithm in Section \ref{sec:2.3} and the improved decentralized algorithm in Section \ref{sec:4.1} in terms of dynamic regret with respect to the global minimizers and the computational efficiency.

\subsection{Hyperbolic Spaces}\label{sec:4.1}

Consider a two-dimensional hyperbolic space $\cM=\HH^2$ and a network of $n=40$ agents. Each agent is connected to its 4 nearest neighbors, as illustrated in Figure \ref{fig:1}. The weight matrix $W$ is constructed using the Metropolis constant edge weight method \cite{shi2015extra}. Specifically, $w_{ij}=1/5$ if agents $i$ and $j$ are connected or if $i=j$; otherwise $w_{ij}=0$. We will investigate the decentralized \Frechet mean computation task with $K=100$.

In our experiments, we examine the following decentralized online Riemannian optimization settings, representing various nonstationary scenarios. For hyperbolic space data, we use the hyperboloid model for experiments and the \Poincare disk model for visualization. Details of these models are provided in Appendix \ref{sec:a6}.

{\it (A)  Static.} In our first setting, we assume that $\{f_{i,t}\}$ remain constant over time $t$. Each agent draws data points $\{z_{i,k}\}_{k=1}^K$ from  Riemannian radial distributions \cite{chen2024riemannian}. Specifically, we generate $n$ samples $\{z_i\}_{i=1}^n$ from a Riemannian Gaussian distribution with base point $\alpha=(1,0,0)$ in the hyperboloid model and $\sigma=5$. These $\{z_i\}_{i=1}^n$ serve as the base points for each agent, and $\{z_{ik}\}_{k=1}^K$ are generated from the Riemannian Gaussian distribution with base point $\alpha=z_i$ and $\sigma=1$. The initial state $x_{i,1}$ of each agent is set as the \Frechet mean of the local data points $\{z_{i,k}\}_{k=1}^K$. We then implement both DPRGD and iDPRGD with stepsizes $\eta =0.001$ and $\gamma=1$, leading to the local states $\{x_{i,t}\}$. Figure \ref{fig:2} shows these local states along with the global minimizer in the \Poincare disk. The results demonstrate that, for both DPRGD and iDPRGD, all local states converge to the global minimizer, showcasing the effectiveness of the algorithms. 

\begin{figure}
    \centering
    \begin{subfigure}[b]{0.19\linewidth}\includegraphics[width=\linewidth]{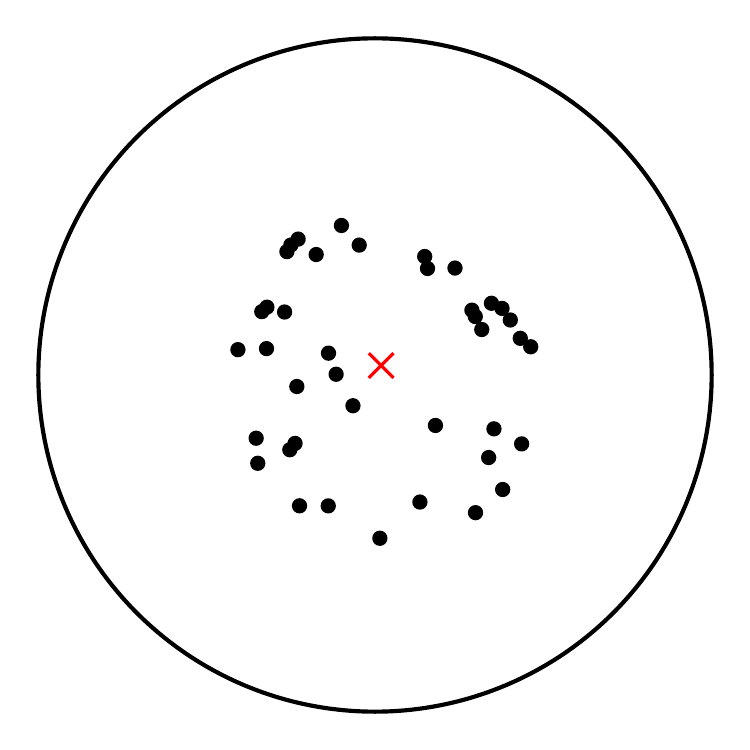}
    \caption{$t=1$}
    \end{subfigure} 
    \begin{subfigure}[b]{0.19\linewidth}\includegraphics[width=\textwidth]{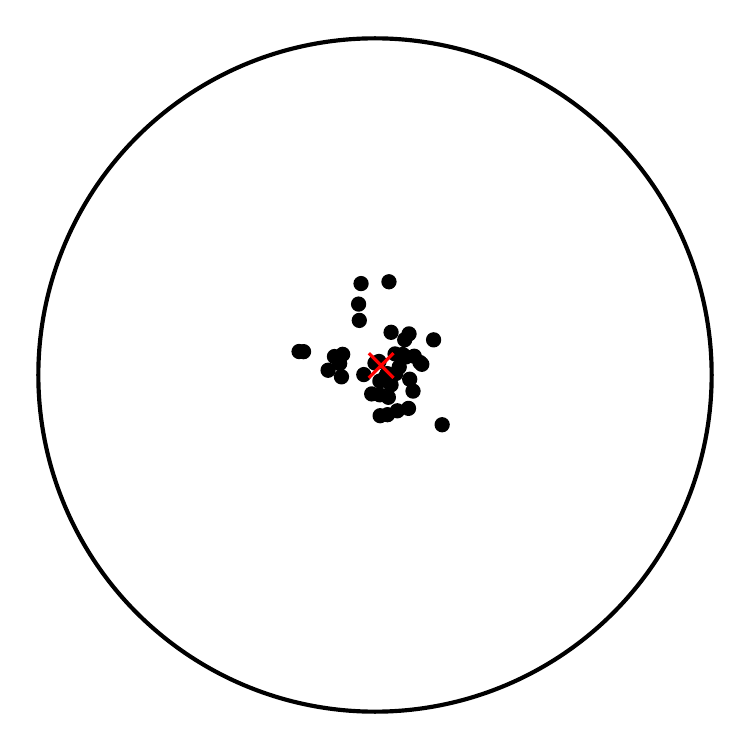}
    \caption{$t=2$}
    \end{subfigure} 
    \begin{subfigure}[b]{0.19\linewidth}
    \includegraphics[width=\textwidth]{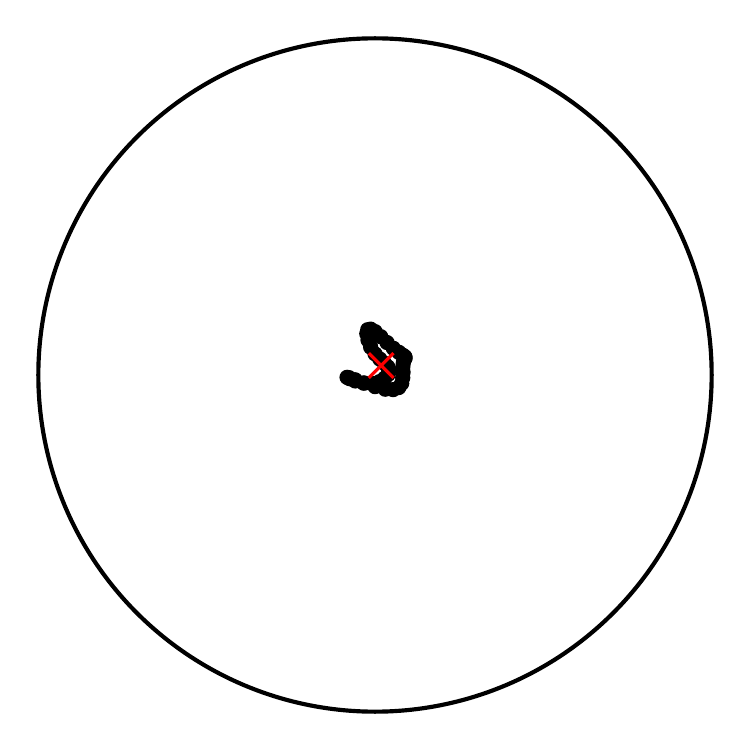}
    \caption{$t=6$}
    \end{subfigure}
    \begin{subfigure}[b]{0.19\linewidth}
        \includegraphics[width=\textwidth]{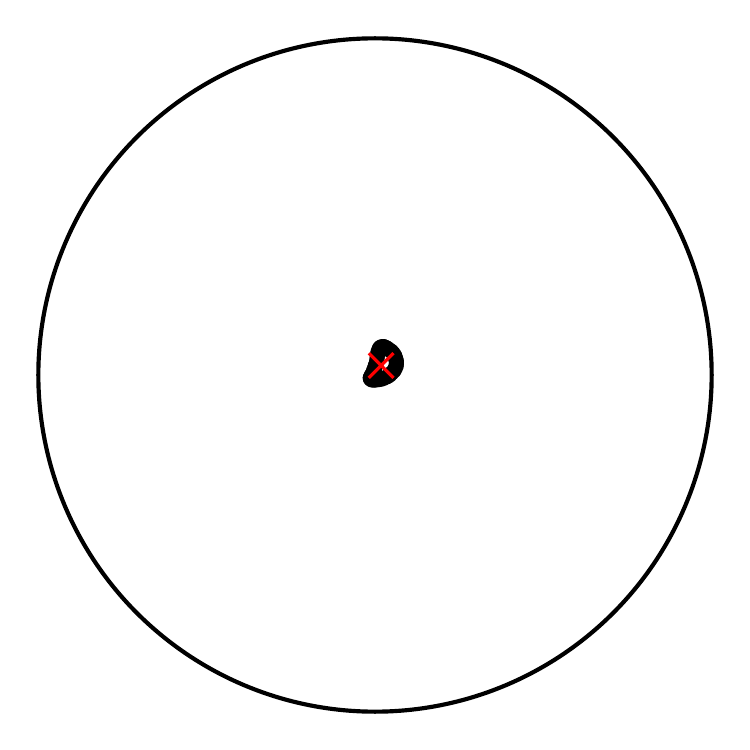}
        \caption{$t=21$}
    \end{subfigure}
    \begin{subfigure}[b]{0.19\linewidth}
        \includegraphics[width=\textwidth]{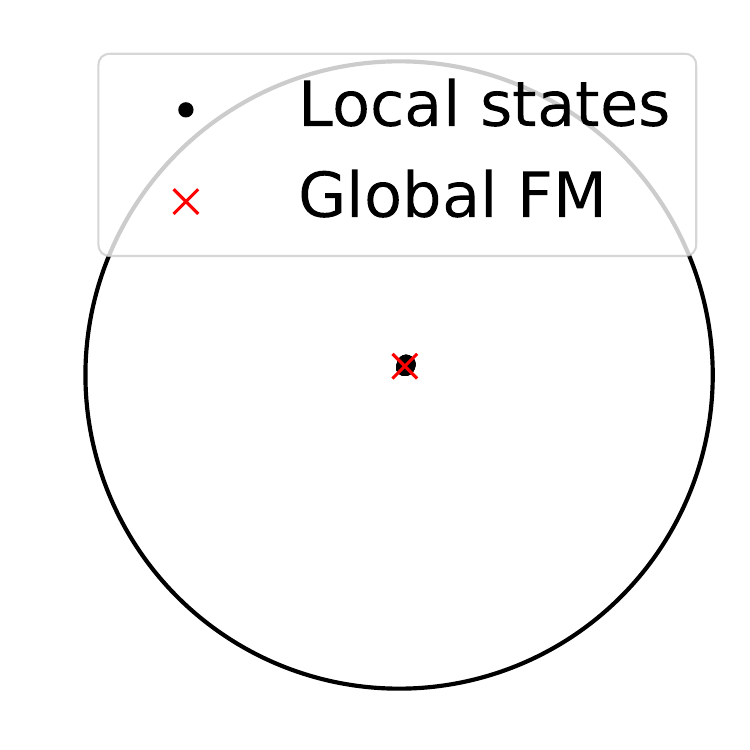}
        \caption{$t=101$}
    \end{subfigure}
    
    \begin{subfigure}[b]{0.19\linewidth}
        \includegraphics[width=\textwidth]{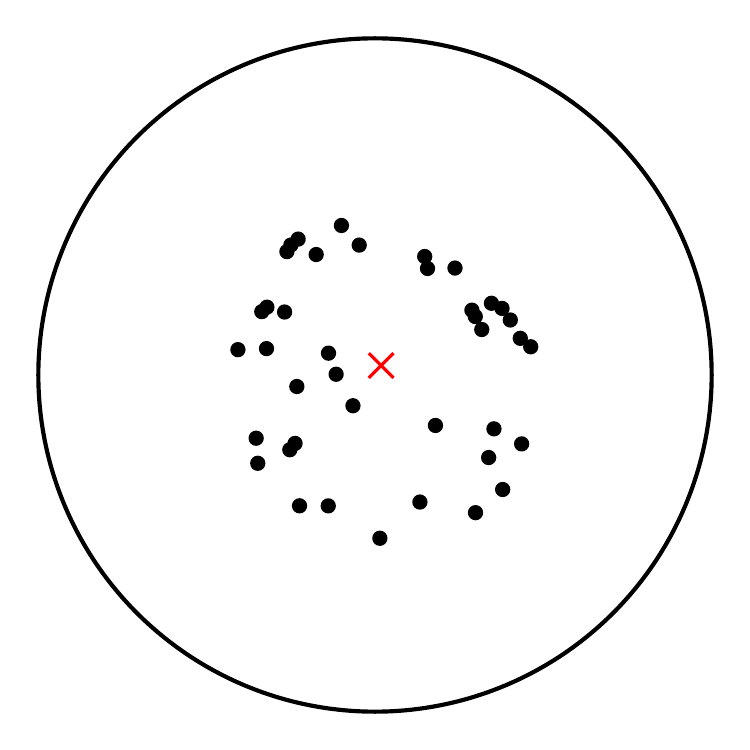}
        \caption{$t=1$}
    \end{subfigure}
    \begin{subfigure}[b]{0.19\linewidth}
        \includegraphics[width=\textwidth]{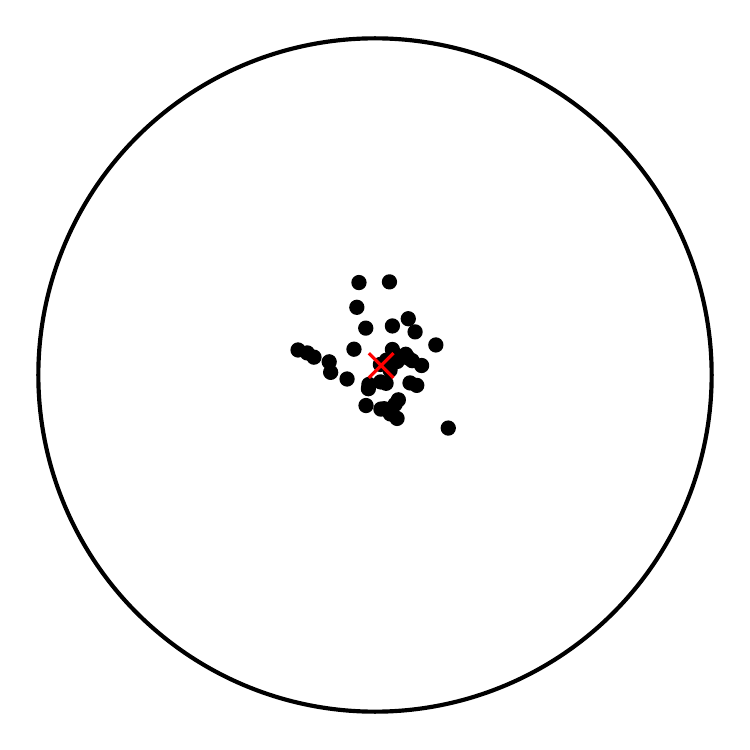}
        \caption{$t=2$}
    \end{subfigure}
    \begin{subfigure}[b]{0.19\linewidth}
        \includegraphics[width=\textwidth]{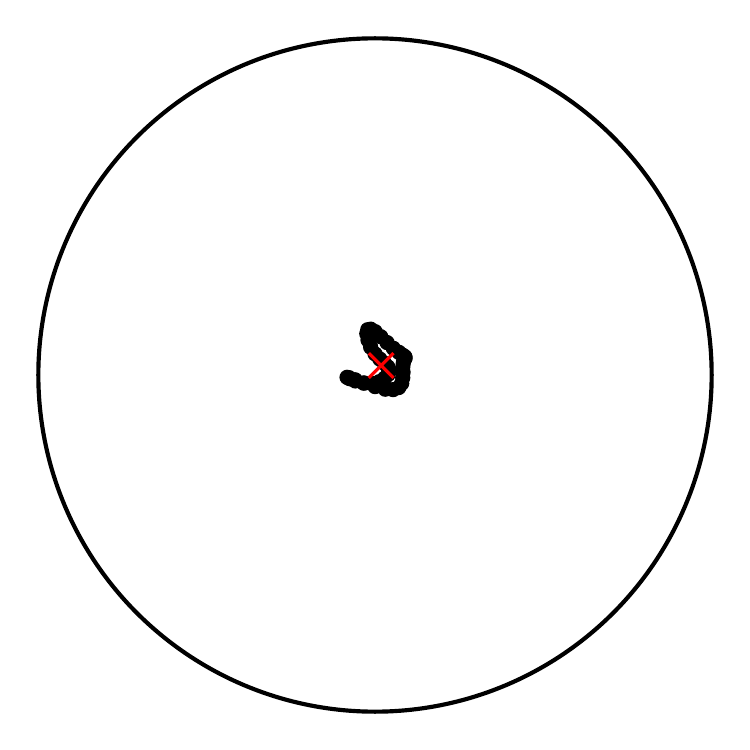}
        \caption{$t=6$}
    \end{subfigure}
    \begin{subfigure}[b]{0.19\linewidth}
        \includegraphics[width=\textwidth]{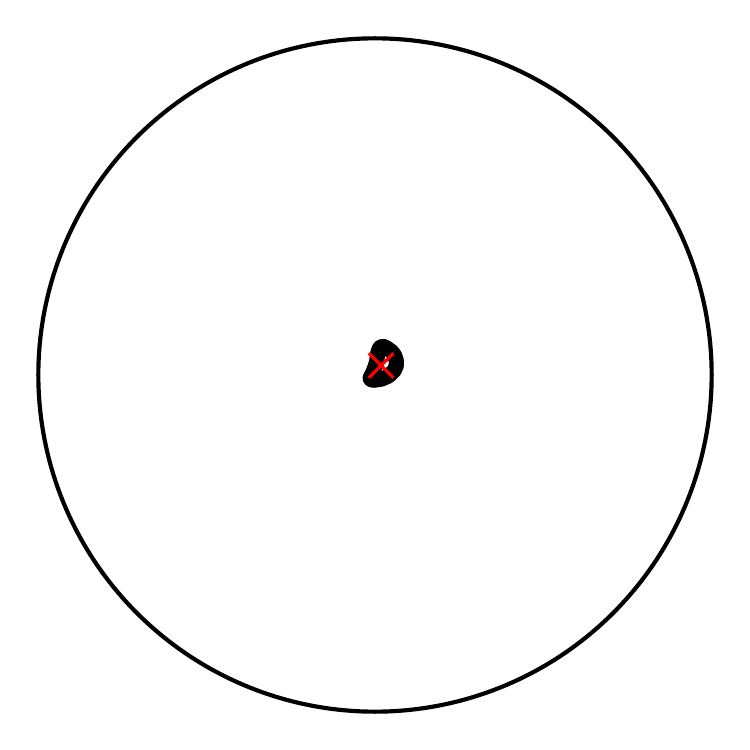}
        \caption{$t=21$}
    \end{subfigure}
    \begin{subfigure}[b]{0.19\linewidth}
        \includegraphics[width=\textwidth]{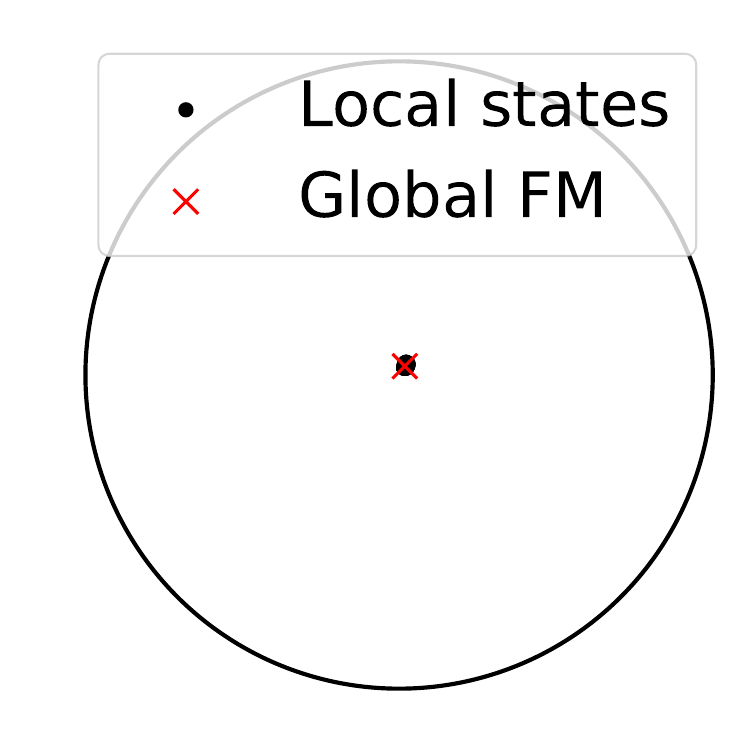}
        \caption{$t=101$}
    \end{subfigure}
    \caption{Illustration of the experiment (A) for hyperbolic spaces. The first row shows the results for DPRGD and the second row shows the results for iDPRGD. In each panel, we display the local states of $n$ agents and the global minimizer (\Frechet mean) at time $t=1,2,6,21,101$ in the \Poincare disk.}
    \label{fig:2}
\end{figure}

\begin{figure}
    \centering
    \includegraphics[width=0.45\linewidth]{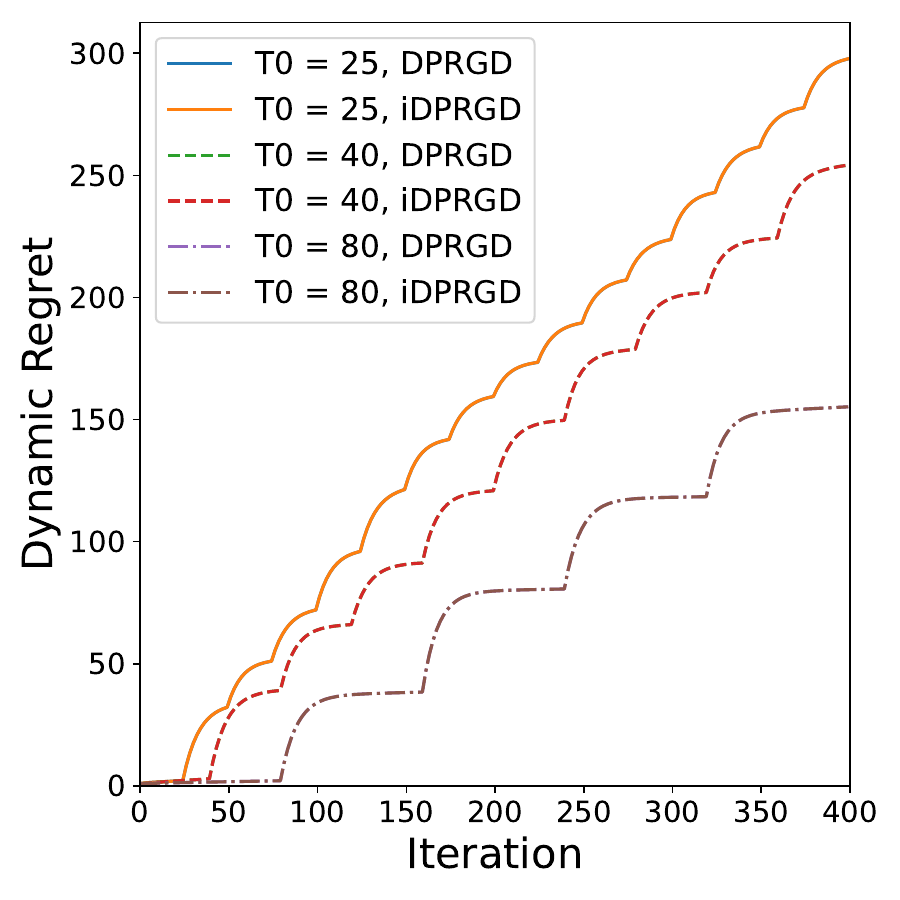}
    \includegraphics[width=0.45\linewidth]{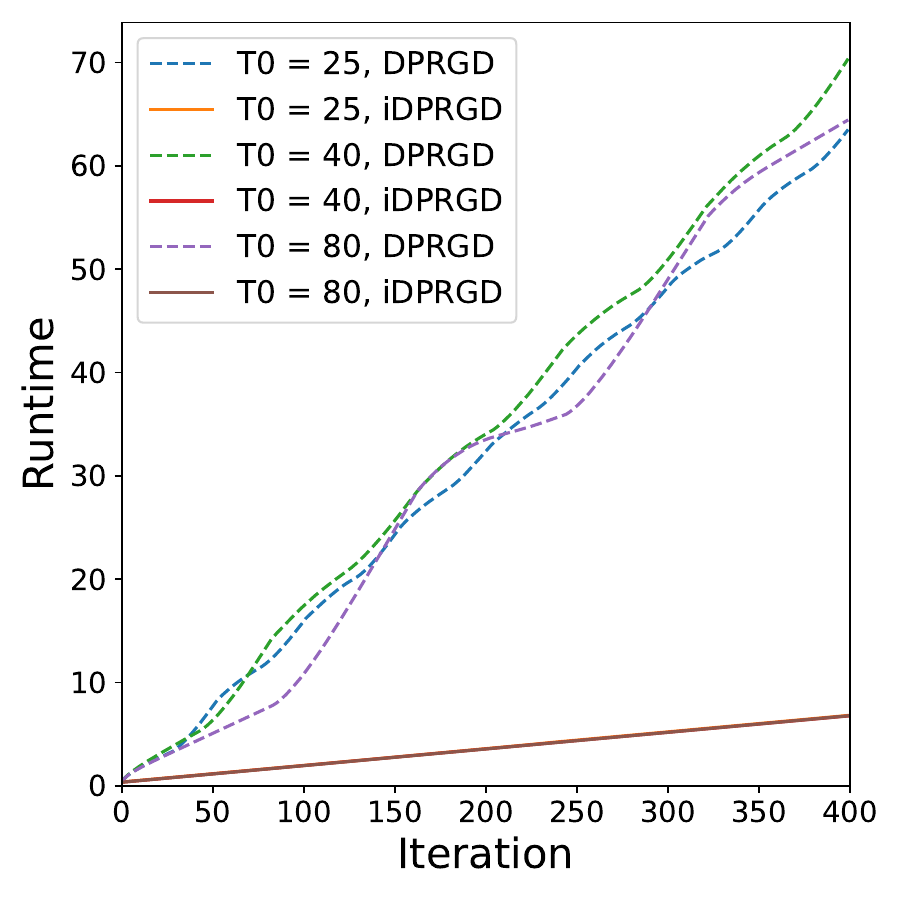}
    \caption{The left and right panels visualize the dynamic regret and runtime of DPRGD and iDPRGD in the experiment (B) for hyperbolic spaces, respectively. The runtime is measured in seconds.}
    \label{fig:3}
\end{figure}

    {\it (B) Abrupt changing.} Our second setting assumes that the local functions $\{f_{i,t}\}$ change abruptly. Specifically, we consider $T=400$ and let $\{f_{i,t}\}$ change every $T_0$ steps. When $t/T_0$ is an odd integer, we draw $\{z_i^t\}_{i=1}^n$ from a Riemannian Gaussian distribution with base point $\alpha=(3,2,2)$ and $\sigma=5$, followed by drawing samples $\{z_{ik}^t\}_{k=1}^K$ from a Riemannian Gaussian distribution with base point $\alpha=z_{i}^t$ and $\sigma=1$. These $\{z_{ik}^t\}$ remain unchanged until $t/T_0$ reaches the next integer. When $t/T_0$ is an even integer, we generate $\{z_i^t\}_{i=1}^n$ from a Riemannian Gaussian distribution with base point $\alpha=(1,0,0)$ and $\sigma=5$, and draw $\{z_{ik}^t\}_{k=1}^K$ from a Riemannian Gaussian distribution with $\alpha=z_i^t$ and $\sigma=1$. These samples, $\{z_{ik}^t\}$, remain unchanged until the next integer value of $t/T_0$. When $T_0$ is larger, the functions change less frequently, creating a more stationary environment. To quantify this, we examine $T_0\in\{25, 40, 80\}$ in our experiments.

    We apply both DPRGD and iDPRGD to these data using $\eta=0.05$ and $\gamma=1$. Figure \ref{fig:3} shows the dynamic regret and runtime for these two algorithms across three dynamic settings. The results indicate that more stationary environments ($T_0 = 80$) yield lower dynamic regret. Sharp increases in regret occur when distributions change, contrasting with plateaus during static periods. This highlights the algorithm's ability to swiftly adapt to new distributions while maintaining stability in unchanging environments. Moreover, while the dynamic regret performance of DPRGD and iDPRGD is nearly identical, iDPRGD achieves a significantly reduced runtime due to the lower computational burden in the consensus step.

    {\it (C)  General changing.} In our final setting, we assume that local functions $\{f_{i,t}\}$ change both abruptly and gradually. Specifically, we let $T=400$ and $T_0\in\{25,40,80\}$. For any time $t\leq T$, we rewrite it as $t=sT_0+r$, where $s$ and $r$ are nonnegative integers. We then draw $\{z_i^t\}$ from a Riemannian Gaussian distribution with base point
    \$
    \alpha=\cosh(r/T_0)(1,0,0)+\sinh(r/T_0)(0,1/\sqrt{2},1/\sqrt{2}),
    \$
    and $\sigma=5$. Subsequently, we draw samples $\{z_{ik}^t\}_{k=1}^K$ from a Riemannian Gaussian distribution with $\alpha=z_{i}^t$ and $\sigma=1$.

    After generating the data, we implement DPRGD and iDPRGD with stepsizes $\eta=0.05$ and $\gamma=1$. The dynamic regrets and runtime across three dynamic settings are visualized in Figure \ref{fig:4}. The results show that the more stationary environment $(T_0=80)$ yields lower dynamic regrets. Sharp increases in regret when distributions change abruptly at time $sT_0$ for integers $s$. In contrast, when distributions change gradually, the regret increases at a slower rate, indicating the decentralized method's ability to adapt to a new environment even during gradual changes. Moreover, while the dynamic regret performance of DPRGD and iDPRGD is nearly identical, iDPRGD demonstrates significantly lower computational cost, highlighting the effectiveness of the closed-form consensus step.

\subsection{Symmetric Positive Definite Matrices}\label{sec:4.2}

Let $\cM$ denote the space of $3\times 3$ symmetric positive definite matrices. The objective of this section is to examine the decentralized \Frechet mean problem on $\cM$. In our setup, we set $K=20$ and use a network of $n=10$ agents, similar to the one in Figure \ref{fig:1}. We explore the following dynamic environment. 

{\it Setting.} Let $T=80$. Assume that local functions $\{f_{i,t}\}$ change abruptly during the first 40 steps and change both abruptly and gradually in the remaining steps. Specifically, for $t\leq 40$, we let $\{f_{i,t}\}$ change every $T_0$ steps, where $T_0\in\{5,10,20\}$ represents different levels of nonstationarity. When $t/T_0$ is an even integer, we draw $Z_{i}^t\sim\Exp_I(V)$,  where $I$ is the identity matrix and $V$ is a random symmetric matrix with entries sampled from $\cU(0,0.1)$. We then generate $Z_{ik}^t\sim \Exp_{Z_i^t}(V)$, where $V$ is a symmetric matrix with entries following $\cU(0,0.1)$. The set $\{Z_{ik}^t\}$ remains unchanged until $t/T_0$ reaches the next integer. When $t/T_0$ is an odd integer, we repeat the same process but with $I$ replaced by $3I$. For $t>40$, we express $t$ as $t=sT_0+r$, where   $s$ and $r$ are nonnegative integers. At time $t$, we draw $\{Z_i^t\}\sim \Exp_{(1+2r/T_0)I}(V)$,  where $V$ is a symmetric matrix with entries generated from $\cU(0,0.1)$. Then we generate $\{Z_{ik}^t\}\sim\Exp_{Z_i^t}(V)$,  where $V$ is a symmetric matrix with entries following $\cU(0,0.1)$.


{\it Results.}
After generating the data, we implement both DPRGD and iDPRGD with stepsizes $\eta=0.1$ and $\gamma=0.5$. The dynamic regrets and runtime for the three dynamic settings are visualized in Figure \ref{fig:5}. The results show that more static cases $(T_0=20)$ yield lower dynamic regrets. Sharp increases in regrets are observed when functions change abruptly, in contrast to plateaus when functions are the same or change gradually. There regret trends demonstrate  the decentralized algorithm's ability to swiftly adapt to new environments,  even when changes are gradual. Additionally, the right panel in Figure \ref{fig:5} highlights the computational efficiency of iDPRGD compared to DPRGD. 
\begin{figure}
    \centering
    \includegraphics[width=0.45\linewidth]{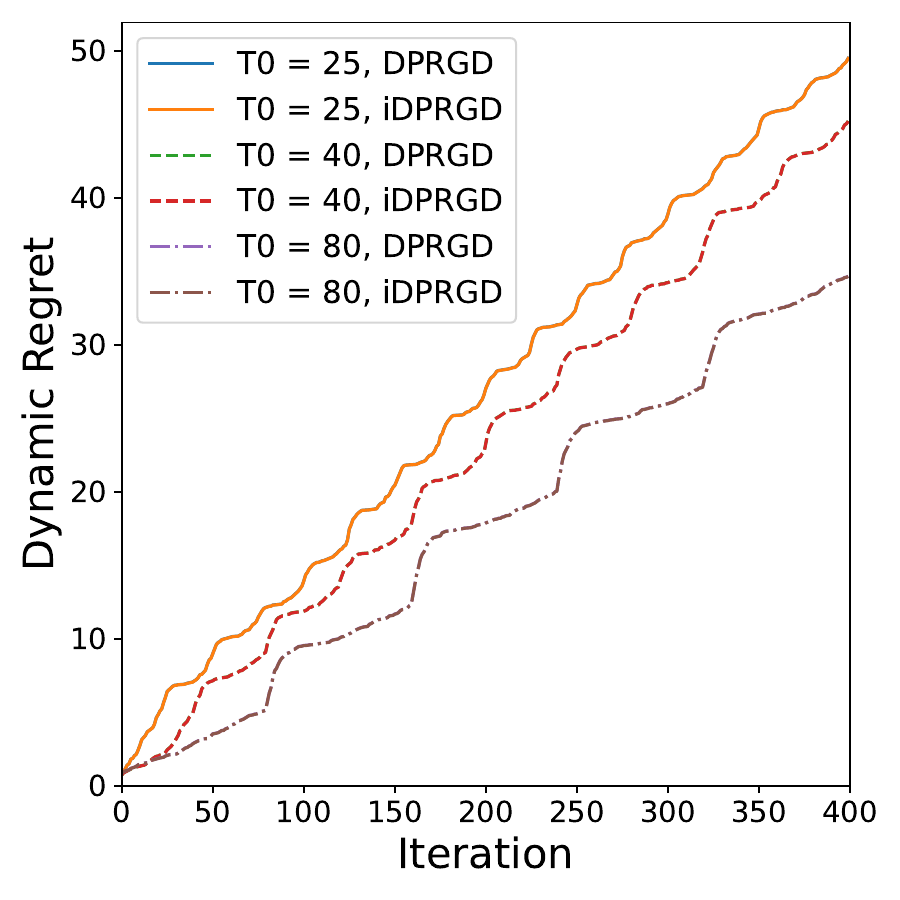}
    \includegraphics[width=0.45\linewidth]{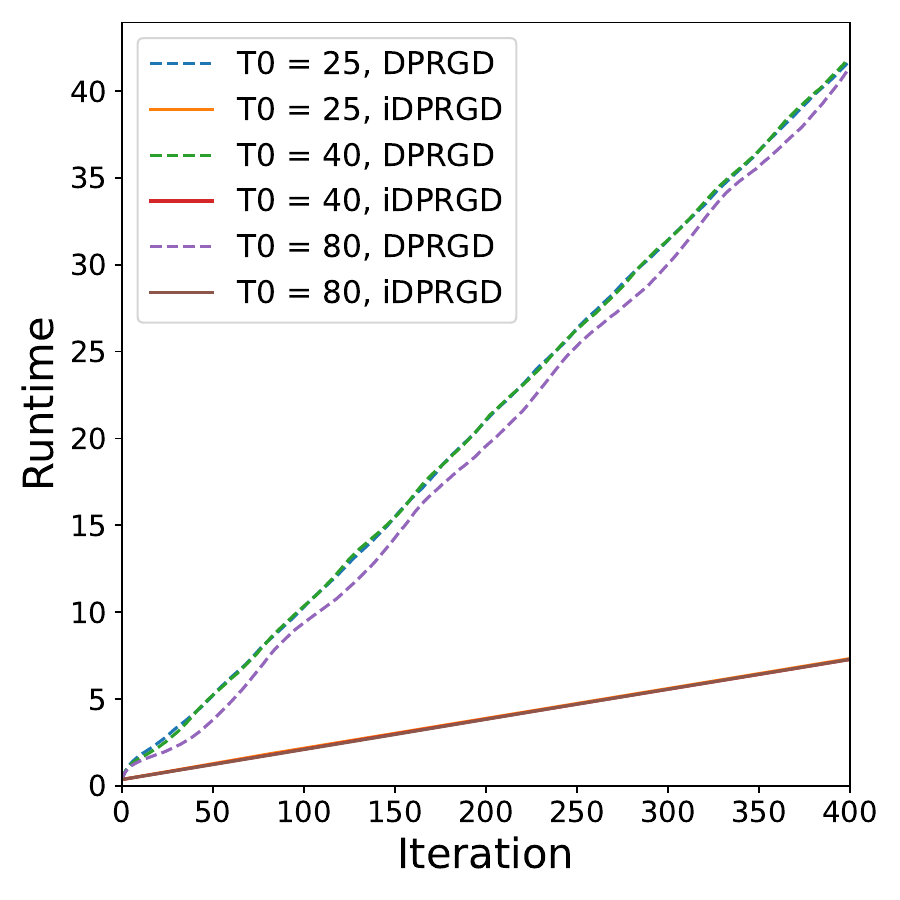}
    \caption{The left and right panels visualize the dynamic regret and runtime of DPRGD and iDPRGD in the experiment (C) for hyperbolic spaces, respectively. The runtime is measured in seconds.}
    \label{fig:4}
\end{figure}


\section{Conclusion}\label{sec:conclusion}


In this paper, we propose the first decentralized online geodesically convex optimization algorithm. We demonstrate its effectiveness by deriving a dynamic regret bound, highlighting its adaptability to dynamic environments. When restricted to the Euclidean, static, and centralized cases, our results recover the corresponding theories for these settings. A caveat in our original algorithm is that the consensus step involves exactly solving a minimization problem, which can be computationally expensive. To further enhance computational efficiency, we propose a computationally efficient consensus step and incorporate it into the decentralized optimization algorithm. This modification reduces computational cost while maintaining the same dynamic regret bound. Finally, we conclude by posing the following  questions for future research. 
\begin{enumerate}
\item {\it Adaptivity.} In practice, it is crucial to design an adaptive algorithm without using unknown oracle information. For example, it remains an important open question on tuning the stepsize, even for Euclidean convex optimization problems \cite{shahrampour2017distributed}. 

\item \textit{Extensions.} It would be valuable to extend this work to address time-varying directed graphs, constrained optimization, bandit optimization, accelerated methods, or optimization with a strongly convex and smooth global function.
\end{enumerate}

\begin{figure}
    \centering
    \includegraphics[width=0.45\linewidth]{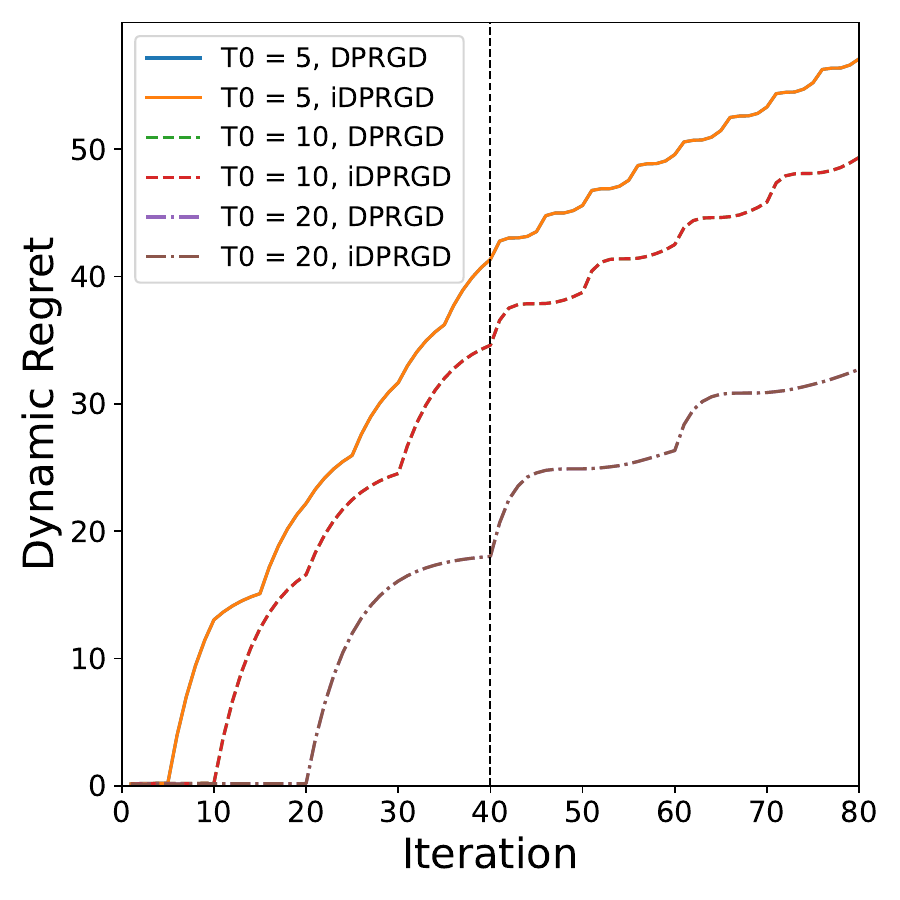}
    \includegraphics[width=0.45\linewidth]{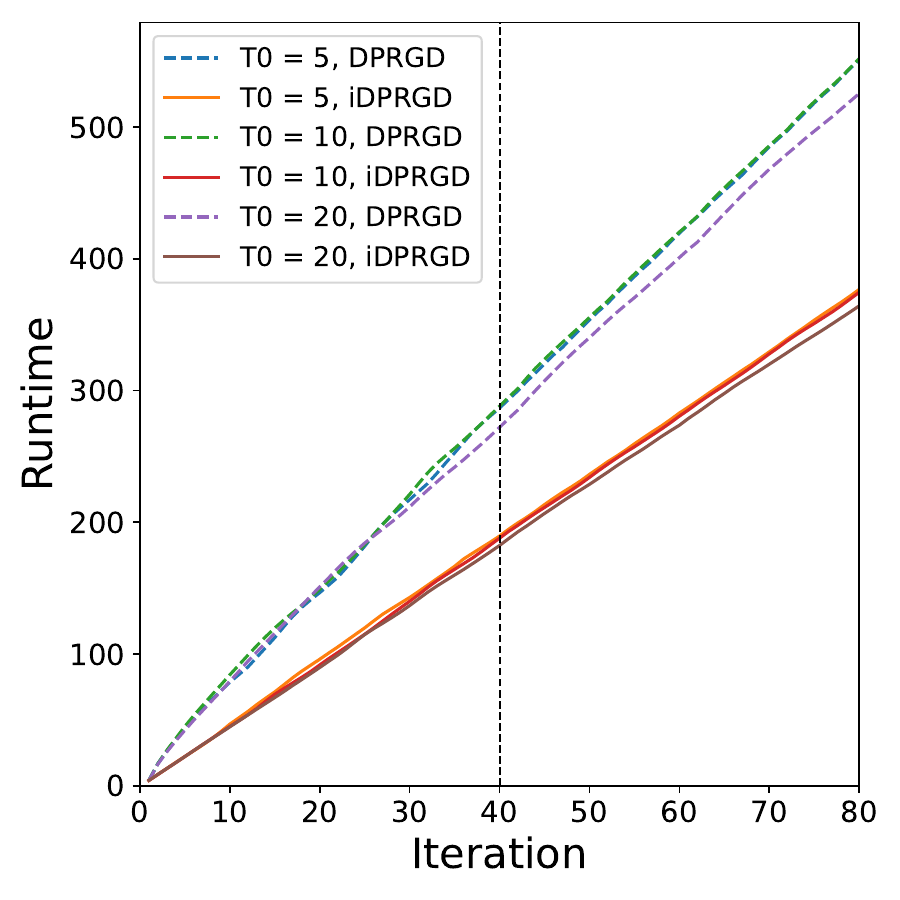}
    \caption{The left and right panels visualize the dynamic regret and runtime of DPRGD and iDPRGD in the experiment in Section \ref{sec:4.2}, respectively.   Periods $t\leq 40$ and $t>40$ represent different dynamic scenarios. The runtime is measured in seconds.}
    \label{fig:5}
\end{figure}






\appendix

\section{Geometric lemmas}

In this section, we present geometric lemmas used in our proof. Lemma \ref{lma:a1} is concerned with geodesic triangles in a Hadamard manifold. It is complementary to Lemma \ref{lma:2} by using the nonpositive sectional curvatures of a Hadamard manifold.

\begin{lemma}[\cite{sakai1996riemannian}, Proposition 4.5]\label{lma:a1}
Suppose $\cM$ is a Hadamard manifold. For any geodesic triangle in $\cM$ with side lengths $a,b,c$, we have
\$
a^2\geq b^2+c^2-2bc\cos A,
\$
where $A$ is the opposite angle associated with $a$. 
\end{lemma}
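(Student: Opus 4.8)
The plan is to reduce the inequality to a one-dimensional convexity estimate along a geodesic, exploiting that nonpositive curvature makes $\tfrac12 d^2(\cdot,p)$ strongly geodesically convex. I label the triangle so that $A$ is the angle at a vertex $p_1$, with $p_2,p_3$ the remaining vertices, $c=d(p_1,p_2)$, $b=d(p_1,p_3)$, and $a=d(p_2,p_3)$. Let $\gamma:[0,c]\to\cM$ be the unit-speed geodesic from $p_1$ to $p_2$, and set $g(t)=\tfrac12 d^2(\gamma(t),p_3)$. Since $\cM$ is Hadamard, $\Exp$ is a diffeomorphism with no cut locus, so $g$ is smooth on all of $[0,c]$. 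The boundary values are $g(0)=\tfrac12 b^2$ and $g(c)=\tfrac12 a^2$, so the target inequality is exactly the second-order Taylor estimate $g(c)\ge g(0)+c\,g'(0)+\tfrac12 c^2$.

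First I would compute $g'(0)$. The Riemannian gradient of $x\mapsto\tfrac12 d^2(x,p_3)$ is $-\Log_x(p_3)$, so the chain rule gives $g'(t)=\inner{-\Log_{\gamma(t)}(p_3)}{\dot\gamma(t)}$. At $t=0$ the vector $\Log_{p_1}(p_3)$ has norm $b$ and points along the side $p_1p_3$, while $\dot\gamma(0)$ is the unit vector along $p_1p_2$; the angle between them is precisely $A$, so $g'(0)=-b\cos A$. Substituting this and the boundary values into the Taylor estimate turns it into $a^2\ge b^2+c^2-2bc\cos A$, so the whole statement reduces to establishing $g''(t)\ge 1$ for every $t$.

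The technical heart, and the step I expect to be the main obstacle, is this lower bound $g''(t)\ge 1$, which is the $1$-strong geodesic convexity of $\tfrac12 d^2(\cdot,p_3)$. Since $\gamma$ is a geodesic, $g''(t)$ equals the Hessian $\text{Hess}\big(\tfrac12 d^2(\cdot,p_3)\big)(\dot\gamma(t),\dot\gamma(t))$, and I would evaluate this Hessian through a Jacobi field. Fixing $t$, let $\sigma:[0,1]\to\cM$ be the geodesic from $p_3$ to $\gamma(t)$ and let $J$ be the Jacobi field along $\sigma$ with $J(0)=0$ and $J(1)=\dot\gamma(t)$, so that $g''(t)$ equals the index form $I(J,J)=\int_0^1\big(\norm{J'}^2-\inner{R(J,\dot\sigma)\dot\sigma}{J}\big)\,ds$. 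Because $\cM$ is Hadamard its sectional curvatures are nonpositive, so $\inner{R(J,\dot\sigma)\dot\sigma}{J}\le 0$ and hence $I(J,J)\ge\int_0^1\norm{J'}^2\,ds$. Expressing $J$ in a parallel orthonormal frame along $\sigma$ and applying Cauchy--Schwarz against the constant function gives $\int_0^1\norm{J'}^2\,ds\ge\norm{J(1)-J(0)}^2=\norm{\dot\gamma(t)}^2=1$, which is the required bound. Integrating $g''\ge 1$ twice then closes the argument and yields $a^2\ge b^2+c^2-2bc\cos A$. Alternatively, one may bypass the Jacobi field computation entirely and invoke the strong convexity of the squared distance on Hadamard manifolds as a standard comparison fact.
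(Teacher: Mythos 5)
Your argument is correct. Note, however, that the paper does not actually prove this lemma: it is quoted verbatim from Sakai (Proposition 4.5 of \cite{sakai1996riemannian}) and used as a black box, so there is no in-paper proof to compare against. Your route --- writing $g(t)=\tfrac12 d^2(\gamma(t),p_3)$ along the unit-speed side from $p_1$ to $p_2$, identifying $g(0)=\tfrac12 b^2$, $g(c)=\tfrac12 a^2$, $g'(0)=-b\cos A$ via the first variation formula $\operatorname{grad}\tfrac12 d^2(\cdot,p_3)=-\Log_{(\cdot)}(p_3)$, and then establishing $g''\ge 1$ through the index form of the Jacobi field with $J(0)=0$, $J(1)=\dot\gamma(t)$, dropping the curvature term by nonpositivity and finishing with Cauchy--Schwarz in a parallel frame --- is the standard second-variation proof of the $1$-strong geodesic convexity of $\tfrac12 d^2(\cdot,p)$ on Hadamard manifolds, and every step checks out (smoothness of $g$ is indeed global since Hadamard manifolds have no cut locus, and the angle between $\Log_{p_1}(p_3)$ and $\dot\gamma(0)$ is by definition the triangle angle $A$ at $p_1$). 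The textbook source instead packages the same curvature input through Alexandrov--Toponogov comparison with a Euclidean model triangle; the two are equivalent, but your Hessian-based derivation is more self-contained and fits naturally with the optimization viewpoint of the paper, where the strong convexity of the squared distance is exactly the property being exploited elsewhere (e.g., in Lemma \ref{lma:2}).
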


Lemma \ref{lma:a2} provides the Jensen inequality on a Hadamard manifold. 

\begin{lemma}[\cite{sturm2003probability}, Theorem 6.2]\label{lma:a2}
    Let $\{y_i\}_{i=1}^n$ be $n$ points on a Hadamard manifold $\cM$. Let $y^w=\argmin_{y}\sum_iw_id^2(y,y_i)$ be the weighted \Frechet mean, where $w_i\geq 0$ and $\sum_{i=1}^nw_i=1$. Then for any geodesically convex function $f$ on $\cM$, it holds that
    \$
    f(y^w)\leq \sum_iw_if(y_i). 
    \$
\end{lemma}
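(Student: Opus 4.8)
The plan is to combine the first-order optimality condition characterizing the weighted \Frechet mean $y^w$ with the (sub)gradient inequality for geodesically convex functions. The key observation is that the cross term produced by the gradient inequality, once summed against the weights $w_i$, is exactly the quantity that optimality of $y^w$ forces to vanish. So the whole proof reduces to computing the gradient of the \Frechet objective at its minimizer and recognizing the resulting identity inside the convexity inequality.

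First I would establish the optimality condition. Consider the smooth objective $F(x)=\frac{1}{2}\sum_i w_i d^2(x,y_i)$, which is minimized at $y^w$ by definition. On a Hadamard manifold the exponential map is a global diffeomorphism and there is no cut locus, so each $d^2(\cdot,y_i)$ is smooth with Riemannian gradient $\nabla_x \frac{1}{2} d^2(x,y_i)=-\Log_x(y_i)$. Hence $F$ is differentiable, and since $y^w$ is its global minimizer, $\nabla F(y^w)=-\sum_i w_i \Log_{y^w}(y_i)=0$, which gives the identity $\sum_i w_i \Log_{y^w}(y_i)=0$ in the tangent space $T_{y^w}\cM$.

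Next I would invoke the gradient inequality for geodesically convex functions stated in the preliminaries, namely $f(y_i)\geq f(y^w)+\inner{\nabla f(y^w)}{\Log_{y^w}(y_i)}$ for each $i$. Multiplying by $w_i\geq 0$, summing over $i$, and using $\sum_i w_i=1$ together with the optimality identity yields
\$
\sum_i w_i f(y_i)\geq f(y^w)+\inner{\nabla f(y^w)}{\sum_i w_i\Log_{y^w}(y_i)}=f(y^w),
\$
which is the claim. The main obstacle is handling a merely geodesically convex $f$ that need not be differentiable, since the lemma is stated without a smoothness hypothesis; there I would replace $\nabla f(y^w)$ by a subgradient $g$ satisfying $f(y_i)\geq f(y^w)+\inner{g}{\Log_{y^w}(y_i)}$, whose existence on a Hadamard manifold follows from standard convex analysis, and the identical computation goes through. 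A secondary technical point is rigorously justifying the gradient formula $\nabla_x \frac{1}{2} d^2(x,y_i)=-\Log_x(y_i)$ and the vanishing of $\nabla F(y^w)$, both of which rest on the absence of conjugate points and cut locus on Hadamard manifolds, so that $F$ is genuinely $C^1$ and unconstrained first-order optimality applies.
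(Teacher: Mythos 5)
Your argument is correct, but note that the paper does not actually prove this lemma: it imports it verbatim as Theorem 6.2 of \cite{sturm2003probability}, where it is established in the far more general setting of barycenters of probability measures on NPC (CAT(0)) metric spaces, necessarily by metric-geometric arguments since no gradient calculus is available there. Your route is a genuinely different, more elementary proof tailored to the smooth Hadamard-manifold setting: the first-order optimality condition $\sum_i w_i\Log_{y^w}(y_i)=0$ (valid because $d^2(\cdot,y_i)$ is smooth with gradient $-2\Log_x(y_i)$ in the absence of a cut locus, and $y^w$ is an unconstrained global minimizer) kills exactly the cross term in the gradient inequality, and the weights sum to one. This is fully rigorous for differentiable $f$, and since the paper only ever applies the lemma to $\rho(z)=d^2(u_t,z)$, which is smooth and geodesically convex on a Hadamard manifold, your version already covers every use in the paper. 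Your handling of the nonsmooth case is the only point requiring care: you should make explicit that a finite-valued geodesically convex function on a finite-dimensional Hadamard manifold is locally Lipschitz and admits a subgradient $g\in T_{y^w}\cM$ satisfying the global inequality $f(y)\geq f(y^w)+\inner{g}{\Log_{y^w}(y)}$ (which follows from the one-dimensional convexity of $t\mapsto f(\gamma(t))$ along the unique geodesic to $y$); with that citation or short argument supplied, the proof is complete. The trade-off is that your proof buys transparency and self-containedness at the cost of the generality of Sturm's statement, which the paper does not need.
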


\section{Proof of Lemma \ref{lma:3}}
    Recall that the consensus step \eqref{alg:II} maps $n$ points $\{y_{i,t+1}\}$ to new points $\{x_{i,t+1}\}$.
    Let $\bar y_{t+1}$ and $\bar x_{t+1}$ denote the \Frechet means of $\{y_{i,t+1}\}$ and $\{x_{i,t+1}\}$, respectively:
    \#
    \bar y_{t+1}&=\argmin_y\sum_{i}d^2(y,y_{i,t+1}),\label{equ:ybar-t}\\
    \bar x_{t+1}&=\argmin_x\sum_id^2(x,x_{i,t+1}).\label{equ:ybar-t+1}
    \#
    Then applying Theorem 2.4 in \cite{bacak2014computing} to weighted \Frechet means in \eqref{equ:2.1}, we obtain that
    \$
    d^2(\bar y_{t+1},x_{i,t+1})+\sum_jw_{ij}d^2(x_{i,t+1},y_{j,t+1})\leq \sum_{j}w_{ij}d^2(\bar y_{t+1},y_{j,t+1})
    \$
    for all $i=1,\ldots,n$. Summing up these $n$ inequalities, we have  
    \#
    &\sum_{i}d^2(\bar y_{t+1},x_{i,t+1})+\sum_i\sum_jw_{ij}d^2(x_{i,t+1},y_{j,t+1})\notag\\
    \leq &\sum_i\sum_jw_{ij}d^2(\bar y_{t+1},y_{j,t+1})\notag\\
    =&\sum_jd^2(\bar y_{t+1},y_{j,t+1}),\label{equ:3.2}
    \#
    where the second inequality uses the fact that $\sum_iw_{ij}=1$. Applying Lemma \ref{lma:a1} to the geodesic triangle with vertices   $\bar y_{t+1},x_{i,t+1},y_{j,t+1}$, we have
    \$
    &d^2(x_{i,t+1},y_{j,t+1}) \\
    \geq\ &d^2(\bar y_{t+1},x_{i,t+1})+d^2(\bar y_{t+1},y_{j,t+1})-2\inner{\Log_{\bar y_{t+1}}x_{i,t+1}}{\Log_{\bar y_{t+1}}y_{j,t+1}},
    \$
    where $\Log_{\bar y_{t+1}}(\cdot)$ is the Riemannian logarithm map and $\inner{\cdot}{\cdot}$ is the inner product in the tangent space $T_{\bar y_{t+1}}\cM$. Substituting this into \eqref{equ:3.2}, we obtain that 
    \$
    &\sum_id^2(\bar y_{t+1},x_{i,t+1})+\sum_{ij}w_{ij}\left(d^2(\bar y_{t+1},x_{i,t+1})+d^2(\bar y_{t+1},y_{j,t+1})\right)\\
    \leq\ &2\sum_{ij}w_{ij} \inner{\Log_{\bar y_{t+1}}x_{i,t+1}}{\Log_{\bar y_{t+1}}y_{j,t+1}}+\sum_jd^2(\bar y_{t+1},y_{j,t+1}).
    \$
    Since $\sum_iw_{ij}=\sum_jw_{ij}=1$, this inequality can be simplified as follows:
    \#\label{equ:3.3}
    \sum_id^2(\bar y_{t+1},x_{i,t+1})\leq \sum_{ij}w_{ij} \inner{\Log_{\bar y_{t+1}}x_{i,t+1}}{\Log_{\bar y_{t+1}}y_{j,t+1}}. 
    \#
    Since $\bar y_{t+1}$ is the \Frechet mean of $\{y_{i,t+1}\}$, it holds that $
    \sum_j\Log_{\bar y_{t+1}}y_{j,t+1}=0.$
    Therefore, we can rewrite \eqref{equ:3.3} as follows:
    \$
    \sum_id^2(\bar y_{t+1},x_{i,t+1})\leq \sum_{ij}(w_{ij}-\frac{1}{n}) \inner{\Log_{\bar y_{t+1}}x_{i,t+1}}{\Log_{\bar y_{t+1}}y_{j,t+1}}.
    \$
    The right hand side of the inequality equals to the Frobenius inner product between $W-\frac{1}{n}$ and the matrix 
    \$
    Q=\left(\inner{\Log_{\bar y_{t+1}}x_{i,t+1}}{\Log_{\bar y_{t+1}}y_{j,t+1}}\right)\in\RR^{n\times n},
    \$
    where $W=(w_{ij})$. By selecting an appropriate orthonormal basis in the tangent space $T_{\bar y_{t+1}}\cM$, we can regard $\{\Log_{\bar y_{t+1}}x_{i,t+1}\}$ and $\{\Log_{\bar y_{t+1}}y_{j,t+1}\}$ as vectors in a Euclidean space, and write
    \$
    V_{t+1}&=\left(\Log_{\bar y_{t+1}}x_{1,t+1},\ldots,\Log_{\bar y_{t+1}}x_{n,t+1}\right),\\
    V_t&=\left(\Log_{\bar y_{t+1}}y_{1,t+1},\ldots, \Log_{\bar y_{t+1}}y_{n,t+1}\right).
    \$
    Then the matrix $Q$ can be rewritten as the matrix multiplication $Q=V_{t+1}^\top V_{t}$. Therefore,
    \$
    \sum_{ij}(w_{ij}-\frac{1}{n}) \inner{\Log_{\bar y_{t+1}}x_{i,t+1}}{\Log_{\bar y_{t+1}}y_{j,t+1}} 
    = \tr\left(\left(W-\frac{1}{n}\right)V_{t+1}^\top V_t\right),
    \$
    where we use the fact that $W$ is a symmetric matrix. By employing the singular value inequality and  Cauchy-Schwarz inequality, we have
    \$
    \tr\left(\left(W-\frac{1}{n}\right)V_{t+1}^\top V_t\right)&\leq \sigma_1\left(W-\frac{1}{n}\right)\tr(V^\top_{t+1}V_t) \leq \sigma_1\left(W-\frac{1}{n}\right)\norm{V_{t+1}}_{\rF}\norm{V_t}_{\rF},
    \$
    where $\norm{\cdot}_{\rF}$ denotes the Frobenius norm. Since $W$ is a doubly stochastic matrix, we have $
    \sigma_1(W-\frac{1}{n})=\sigma_2(W).$
    Also, we observe that 
    \$
    \norm{V_{t+1}}_{\rF}^2&=\sum_id^2(\bar y_{t+1},x_{i,t+1}),\\
    \norm{V_t}_{\rF}^2&=\sum_jd^2(\bar y_{t+1},y_{j,t+1}).
    \$
    Therefore, combining all the above results, we obtain that
    \$
    \norm{V_{t+1}}_{\rF}^2&\leq \tr\left(\left(W-\frac{1}{n}\right)V_{t+1}^\top V_t\right) \leq \sigma_2(W)\norm{V_{t+1}}_{\rF}\norm{V_t}_{\rF},
    \$
    which implies that $\norm{V_{t+1}}_{\rF}\leq \sigma_2(W)\norm{V_t}_{\rF}$. Finally, using the definition of $\bar x_{t+1}$ and \Frechet variance, we have
    \$
    n\VF(\{x_{i,t+1}\})&=\sum_id^2(\bar x_{t+1},x_{i,t+1})\\
    &\leq \sum_{i}d^2(\bar y_{t+1},x_{i,t+1})\\
    &\leq \sigma_2^2(W)\sum_jd^2(\bar y_{t+1},y_{j,t+1})=n\sigma_2^2(W)\VF(\{y_{i,t+1}\}).
    \$
    This concludes the proof.  

\section{Proof of Lemma \ref{lma:4}}

 We prove this lemma by investigating the \Frechet variance $\VF(\{x_{i,t}\})$ of the set $\{x_{i,t}\}_{i=1}^n$ over time $t$. As we initialize $x_{i,1}=x_1$ at the same point, we have $\VF(\{x_{i,1}\})=0.$ At each iteration, we first perform the projected Riemannian gradient descent \eqref{alg:I}, so we have 
 \#\label{equ:b1}
 d(y_{i,t+1},x_{i,t})\leq \eta \norm{\nabla_{i,t}}\leq \eta L,
 \#
 where the first inequality follows the argument in Section \ref{sec:2.4} and the second inequality uses the fact that $f_{i,t}$ is $L$-Lipschitz. Therefore, using the triangle inequality, we have that
 \$
 \sqrt{\sum_id^2(\bar x_t,y_{i,t+1})} 
 \leq & \sqrt{\sum_id^2(\bar x_t,x_{i,t})}+\sqrt{\sum_id^2(y_{i,t+1},x_{i,t})}\\
 \leq &\sqrt{\sum_id^2(\bar x_t,x_{i,t})}+\eta\sqrt{n}L, 
 \$
 where $\bar x_t$ is the \Frechet mean of $\{x_{i,t}\}_{i=1}^n$ and the second inequality uses \eqref{equ:b1}. Let $\bar y_{t+1}$ be the \Frechet mean of $\{y_{i,t+1}\}_{i=1}^n$. Then we have
 \#
 \sqrt{\sum_id^2(\bar y_{t+1},y_{i,t+1})}&\leq \sqrt{\sum_id^2(\bar x_t,y_{i,t+1})} 
\leq \sqrt{\sum_id^2(\bar x_t,x_{i,t})}+\eta\sqrt{n}L.\label{equ:b2}
 \#
 Using Lemma \ref{lma:3}, we have
 \$
 \sqrt{\sum_id^2(\bar x_{t+1},x_{i,t+1})} 
 \leq\ &\sigma_2(W) \sqrt{\sum_id^2(\bar y_{t+1},y_{i,t+1})}\\
 \leq\ &\sigma_2(W)\sqrt{\sum_id^2(\bar x_t,x_{i,t})}+\eta\sigma_2(W)\sqrt{n}L,
 \$
 where the second inequality uses \eqref{equ:b2}. Then using an induction argument, one can easily show that 
 \$
 \sqrt{\sum_id^2(\bar x_{t},x_{i,t})}\leq \frac{\eta\sigma_2(W)\sqrt{n}L}{1-\sigma_2(W)},\quad\forall t.
 \$
This immediately implies that
\$
d(x_{i,t},\bar x_t)\leq  \sqrt{\sum_id^2(\bar x_{t},x_{i,t})}\leq \frac{\eta\sigma_2(W)\sqrt{n}L}{1-\sigma_2(W)},\quad \forall i,t.
\$ 

\section{Proof of Lemma \ref{lma:5}}
First, we adopt the following decomposition:
\$
d^2(u_t,x_{i,t})-d^2(u_t,y_{i,t+1})=\ &d^2(u_t,x_{i,t})-d^2(u_{t+1},x_{i,t+1})\\
+\ &d^2(u_{t+1},x_{i,t+1})-d^2(u_{t},x_{i,t+1})\\
+\ &d^2(u_{t},x_{i,t+1})-d^2(u_t,y_{i,t+1}).
\$
Then we upper bound the above three terms separately. For the second term, we use the triangle inequality to obtain that
\$
d^2(u_{t+1},x_{i,t+1})-d^2(u_{t},x_{i,t+1})\leq 2Dd(u_{t+1},u_{t}) 
\$
where $D$ is the diameter of $\cX$ and we use the fact that $u_{t+1},u_t,x_{i,t+1}\subseteq\cX$. For the third term, we first observe that $\rho(z)\coloneqq d^2(u_t,z)$ is a geodesically convex function over a Hadamard space $\cM$ \cite{afsari2011riemannian}. Then applying Lemma \ref{lma:a2}, we have
\$
&\sum_{i=1}^nd^2(u_t,x_{i,t+1})-\sum_{i=1}^nd^2(u_t,y_{i,t+1})\\
\leq& \sum_{i=1}^n\sum_{j=1}^nw_{ij}d^2(u_t,y_{j,t+1})-\sum_{i=1}^nd^2(u_t,y_{i,t+1}) =0,
\$
where the first inequality uses the definition \eqref{alg:II} of $\{x_{i,t+1}\}$ and the equality follows from $\sum_iw_{ij}=1$. Thus, combining the above analysis, we obtain that
\$
 \frac{1}{n}\sum_{i=1}^n\sum_{t=1}^T\left(d^2(u_t,x_{i,t})-d^2(u_t,y_{i,t+1})\right) 
    \leq\ &\frac{1}{n}\sum_{i=1}^nd^2(u_1,x_{i,1})+2D\sum_{t=1}^{T}d(u_{t+1},u_t)\\
    \leq\ &D^2+2D\sum_{t=1}^Td(u_{t+1},u_t),
\$
where the second inequality uses $d^2(u_1,x_{i,1})\leq D^2$. 

\section{Proof of Theorem \ref{thm:1}}
To prove Theorem \ref{thm:1}, we first establish an auxiliary lemma. 

\begin{lemma}\label{lma:d}
    Consider the same conditions used in Theorem \ref{thm:1}. Let $\{x_{i,t}\}$ be the local states generated by the decentralized algorithm in Section \ref{sec:2.3}, and $\{u_t\}$ a sequence of comparators in $\cX$. Then it holds that 
    \$
    &\frac{1}{n}\sum_{i=1}^n\sum_{t=1}^T\left(f_{i,t}(x_{i,t})-f_{i,t}(u_t)\right) 
    \leq \eta L^2\zeta(\kappa,D+\eta L)T+\frac{1}{\eta}\left(D^2+2DP_T\right),
    \$
    where $\zeta(\kappa,c)=\sqrt{|\kappa|}c\coth(\sqrt{|\kappa|}c)$, $\kappa$ is the lower sectional curvature bound of $\cM$, $D$ is the diameter of $\cX$,  $L$ is the Lipschitz constant of $f_{i,t}$, and $P_T=\sum_{t=1}^Td(u_{t+1},u_t)$.
\end{lemma}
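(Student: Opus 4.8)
The plan is to adapt the standard online-gradient-descent potential argument to the Riemannian setting, using geodesic convexity to linearize the per-step suboptimality and the curvature comparison Lemma \ref{lma:2} to convert the resulting inner product into a telescoping difference of squared distances. First I would fix an agent $i$ and time $t$ and apply geodesic convexity of $f_{i,t}$ at $x_{i,t}$ together with the gradient inequality from Section \ref{sec:2.1}, giving
\[
f_{i,t}(x_{i,t}) - f_{i,t}(u_t) \le -\inner{\nabla_{i,t}}{\Log_{x_{i,t}}(u_t)}.
\]
The problem thus reduces to controlling the inner product on the right-hand side.

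Next I would introduce the pre-projection point $z_{i,t+1} = \Exp_{x_{i,t}}(-\eta\nabla_{i,t})$, so that $\Log_{x_{i,t}}(z_{i,t+1}) = -\eta\nabla_{i,t}$ and the sought inner product is, up to the factor $-\eta$, the cosine term of the geodesic triangle with vertices $x_{i,t}, z_{i,t+1}, u_t$ at the vertex $x_{i,t}$. Applying Lemma \ref{lma:2} to this triangle yields
\[
d^2(u_t, z_{i,t+1}) \le d^2(u_t, x_{i,t}) + \zeta(\kappa, D+\eta L)\,\eta^2\|\nabla_{i,t}\|^2 + 2\eta\inner{\nabla_{i,t}}{\Log_{x_{i,t}}(u_t)},
\]
where I bound the relevant side length appearing as the argument of $\zeta$ uniformly by $D+\eta L$: since $d(x_{i,t},z_{i,t+1}) = \eta\|\nabla_{i,t}\| \le \eta L$ and $x_{i,t},u_t\in\cX$, all three sides of the triangle are at most $D+\eta L$, and $\zeta$ is increasing in its second argument. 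I also use $\|\nabla_{i,t}\|\le L$ from the Lipschitz assumption.

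I would then pass from $z_{i,t+1}$ to the actual iterate $y_{i,t+1} = \cP_{\cX}(z_{i,t+1})$ via nonexpansiveness of the projection (Lemma \ref{lma:1}): because $u_t\in\cX$, $d(u_t, y_{i,t+1}) \le d(u_t, z_{i,t+1})$, so replacing $z_{i,t+1}$ by $y_{i,t+1}$ only weakens the inequality. Rearranging the displayed bound and combining with the convexity step produces the per-step estimate
\[
f_{i,t}(x_{i,t}) - f_{i,t}(u_t) \le \tfrac{1}{2\eta}\big(d^2(u_t, x_{i,t}) - d^2(u_t, y_{i,t+1})\big) + \tfrac{\eta}{2}\,L^2\,\zeta(\kappa, D+\eta L).
\]
Averaging over $i$, summing over $t$, and invoking Lemma \ref{lma:5} to bound $\frac1n\sum_{i,t}\big(d^2(u_t, x_{i,t}) - d^2(u_t, y_{i,t+1})\big)\le D^2 + 2DP_T$ then delivers the claimed bound (the stated constants are consistent with, and in fact a factor of two looser than, this computation).

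The main obstacle I anticipate is the interplay between the exponential-map step and the projection in the second stage. The inner product I must control is naturally tied to $z_{i,t+1}$, the point where $\Log_{x_{i,t}}(\cdot) = -\eta\nabla_{i,t}$ holds exactly, and \emph{not} to $y_{i,t+1}$, for which no such clean identity is available. The argument therefore has to apply Lemma \ref{lma:2} to the unprojected triangle, identify the correct adjacent side as the argument of $\zeta$, and only afterward use Lemma \ref{lma:1} to substitute $y_{i,t+1}$; verifying the monotonicity of $\zeta$ and that $D+\eta L$ uniformly dominates every side of the triangle is the delicate bookkeeping that makes the curvature factor come out as stated.
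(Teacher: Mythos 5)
Your proposal is correct and follows essentially the same route as the paper's proof: geodesic convexity to linearize, Lemma \ref{lma:2} applied to the triangle with vertices $x_{i,t}$, $z_{i,t+1}$, $u_t$ with the side bound $D+\eta L$, nonexpansiveness of $\cP_{\cX}$ to pass to $y_{i,t+1}$, and Lemma \ref{lma:5} to telescope. Your observation that the computation actually yields the bound with an extra factor of $1/2$ is also consistent with what the paper's own derivation produces before stating the (looser) lemma.
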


\begin{proof}[Proof of Lemma \ref{lma:d}]
    Due to the geodesic convexity of $f_{i,t}$, we have
    \#
    f_{i,t}(x_{i,t})-f_{i,t}(u_t)&\leq \inner{-\nabla_{i,t}}{\Log_{x_{i,t}}u_{t}} =\frac{1}{\eta}\inner{\Log_{x_{i,t}}z_{i,t+1}}{\Log_{x_{i,t}}u_{t}},\label{equ:d1}
    \#
    where $\nabla_{i,t}=\nabla f_{i,t}(x_{i,t})$ and  $z_{i,t+1}=\Exp_{x_{i,t}}(-\eta\nabla_{i,t}).$ Then using Lemma \ref{lma:2} to the triangle with vertices $x_{i,t},z_{i,t+1},u_t$, we have
    \$
    &2\inner{\Log_{x_{i,t}}z_{i,t+1}}{\Log_{x_{i,t}}u_{t}}\\
    \leq\ & \zeta(\kappa,D+\eta L)d^2(x_{i,t},z_{i,t+1})+d^2(x_{i,t},u_{t})-d^2(z_{i,t+1},u_t),
    \$
    where $\zeta(\kappa,c)=\sqrt{|\kappa|}c\coth(\sqrt{|\kappa|}c)$ is an increasing function of $c>0$, $\kappa<0$ is the lower sectional curvature bound of $\cM$. In the above inequality, we use the fact that 
    \$
    d(z_{i,t+1},u_t)\leq d(z_{i,t+1},x_{i,t})+d(x_{i,t},u_t)\leq D+\eta L,
    \$
    where $D=\sup_{x,y\in\cX}d(x,y)$ denotes the diameter of $\cX$ and $L$ is the Lipschitz constant of $f_{i,t}$. Using the Lipschitz property of $f_{i,t}$, we have 
    \$
    d^2(x_{i,t},z_{i,t+1})=\eta^2\norm{\nabla_{i,t}}^2\leq \eta^2L^2.
    \$
    Since $y_{i,t+1}=\cP_{\cX}(z_{i,t+1})$ and $u_t\in\cX$, by Lemma \ref{lma:1}, we have
    \$
    d^2(y_{i,t+1},u_t)\leq d^2(z_{i,t+1},u_t). 
    \$
    Therefore, it holds that
    \$
    2\inner{\Log_{x_{i,t}}z_{i,t+1}}{\Log_{x_{i,t}}u_{t}} 
    \leq\ & \eta^2L^2\zeta(\kappa,D+\eta L)+d^2(x_{i,t},u_t)-d^2(y_{i,t+1},u_t).
    \$
    Summing up over $i$ and $t$ and applying Lemma \ref{lma:5}, we have 
    \$
    &\frac{2}{n}\sum_{i=1}^n\sum_{t=1}^T\inner{\Log_{x_{i,t}}z_{i,t+1}}{\Log_{x_{i,t}}u_{t}}\\
    \leq \ &\eta^2L^2\zeta(\kappa,D+\eta L)T+D^2+2D\sum_{t=1}^Td(u_{t+1},u_t).
    \$
    Then we conclude the proof by combining this inequality with \eqref{equ:d1}. 
\end{proof}

Then we are ready to prove Theorem \ref{thm:1}. 

\begin{proof}[Proof of Theorem \ref{thm:1}]

To upper bound the dynamic regret in \eqref{equ:1.2}, we start with the following decomposition
\$
f_{t}(x_{i,t})-f_t(u_t)=f_t(x_{i,t})-f_t(\bar x_t)+f_t(\bar x_t)-f_t(u_t),
\$
where $\bar x_t$ is the \Frechet mean of $\{x_{i,t}\}_{i=1}^n$. By the Lipschitz property of $f_{i,t}$ (and thus $f_t$) and Lemma \ref{lma:4}, we have
\#\label{equ:e1}
f_t(x_{i,t})-f_t(\bar x_t)&\leq Ld(x_{i,t},\bar x_t) \leq \frac{\eta\sigma_2(W)\sqrt{n}L^2}{1-\sigma_2(W)},
\#
where $\sigma_2(W)$ is the second largest singular value of $W$. Thus,
\$
 f_{t}(x_{i,t})-f_t(u_t) 
\leq\ &\frac{\eta\sigma_2(W)\sqrt{n}L^2}{1-\sigma_2(W)}+\frac{1}{n}\sum_{i=1}^n\left(f_{i,t}(\bar x_t)-f_{i,t}(u_t)\right)\\
\leq\ &\frac{2\eta\sigma_2(W)\sqrt{n}L^2}{1-\sigma_2(W)}+\frac{1}{n}\sum_{i=1}^n\left(f_{i,t}(x_{i,t})-f_{i,t}(u_t)\right),
\$
where the second inequality follows from the similar argument in \eqref{equ:e1} (Lipschitz property of $f_{i,t}$ and Lemma \ref{lma:4}). Now we sum this inequality over $i$ and $t$, we obtain that
\$
\textnormal{D-Regret}(\{x_{i,t}\}) 
\leq\ &\frac{2\eta\sigma_2(W)\sqrt{n}L^2T}{1-\sigma_2(W)}+\frac{1}{n}\sum_{i=1}^n\sum_{t=1}^T\left(f_{i,t}(x_{i,t})-f_{i,t}(u_t)\right)\\
\leq\ &\eta TC+\frac{1}{\eta}(D^2+2DP_T),
\$
where $C=L^2\zeta(\kappa,D+\eta L)+2\sigma_2(W)\sqrt{n}L^2/(1-\sigma_2(W))$ is a constant and $P_T$ denotes the path variation $\sum_{t=1}^Td(u_{t+1},u_t)$. 
\end{proof}

\section{Proof of Lemma \ref{lma:4.1}}

Recall that the consensus step \eqref{equ:4.1} maps points $\{y_{i,t+1}\}$ to $\{x_{i,t+1}\}$. Let $\bar y_{t+1}$ and $\bar x_{t+1}$ denote the \Frechet means of $\{y_{i,t+1}\}_{i=1}^n$ and $\{x_{i,t+1}\}_{i=1}^n$ as in \eqref{equ:ybar-t} and \eqref{equ:ybar-t+1}, respectively. Since $\cM$ is a Hadamard manifold with sectional curvatures lower bounded by $\kappa<0$, we can apply Lemma \ref{lma:2} to the geodesic triangle with vertices $y_{i,t+1},x_{i,t+1},\bar y_{t+1}$ and obtain that
\#
d^2(\bar y_{t+1}, x_{i,t+1})\notag
&\leq\zeta(\kappa,d(\bar y_{t+1},x_{i,t+1})) \cdot d^2(x_{i,t+1},y_{i,t+1})+d^2(\bar y_{t+1},y_{i,t+1})\\
&-2\inner{\Log_{y_{i,t+1}}x_{i,t+1}}{\Log_{y_{i,t+1}}\bar y_{t+1}},\label{equ:f1}
\#
where $\zeta(\kappa,c)=\sqrt{|\kappa|}c\coth(\sqrt{|\kappa|}c)$. Using definition \eqref{equ:4.1} of $x_{i,t+1}$, we have
\#\label{equ:f2}
\Log_{y_{i,t+1}}x_{i,t+1}=\gamma \sum_{j=1}^nw_{ij}\Log_{y_{i,t+1}}y_{j,t+1}
\#
and $d(y_{i,t+1},x_{i,t+1})\leq \gamma\max_{ij}d(y_{i,t+1},y_{j,t+1})\leq \gamma D$, where $D$ is the diameter of $\cX$. Therefore, $d(\bar y_{t+1},x_{i,t+1})\leq (1+\gamma)D$ by the triangle inequality, and
\#\label{equ:f3}
\zeta(\kappa,d(\bar y_{t+1},x_{i,t+1}))\leq \zeta(\kappa,(1+\gamma)D)\eqqcolon C_2,
\#
since $\zeta(\kappa,c)$ is an increasing function of $c$. Furthermore, by using \eqref{equ:f2}, we have
\#
&2\inner{\Log_{y_{i,t+1}}x_{i,t+1}}{\Log_{y_{i,t+1}}\bar y_{t+1}}\notag\\
=\ &\gamma\sum_{j=1}^nw_{ij}\cdot 2\inner{\Log_{y_{i,t+1}}y_{j,t+1}}{\Log_{y_{i,t+1}}\bar y_{t+1}}.\label{equ:f4}
\#
Applying Lemma \ref{lma:a1} to the geodesic triangle with vertices $y_{i,t+1},y_{j,t+1},\bar y_{t+1}$, we have 
\#
&2\inner{\Log_{y_{i,t+1}}y_{j,t+1}}{\Log_{y_{i,t+1}}\bar y_{t+1}}\notag\\
\geq\ & d^2(\bar y_{t+1},y_{i,t+1})+d^2(y_{i,t+1},y_{j,t+1})-d^2(\bar y_{t+1},y_{j,t+1}).\label{equ:f5}
\#
Now substituting \eqref{equ:f3}, \eqref{equ:f4}, and \eqref{equ:f5} into \eqref{equ:f1} and summing over $i$, we obtain 
\#
&\sum_{i=1}^nd^2(\bar y_{t+1},x_{i,t+1})\notag\\
\leq\ &C_2\cdot \sum_{i=1}^nd^2(x_{i,t+1},y_{i,t+1})+\sum_{i=1}^nd^2(\bar y_{t+1},y_{i,t+1})\notag\\
+\ &\gamma\sum_{i,j=1}^nw_{ij}\left(d^2(\bar y_{t+1},y_{j,t+1})-d^2(\bar y_{t+1},y_{i,t+1})-d^2(y_{i,t+1},y_{j,t+1})\right)\notag\\
=\ &C_2\cdot \sum_{i=1}^nd^2(x_{i,t+1},y_{i,t+1})+\sum_{i=1}^nd^2(\bar y_{t+1},y_{i,t+1})-\gamma\sum_{i,j=1}^nw_{ij}d^2(y_{i,t+1},y_{j,t+1}),\label{equ:f6}
\#
where the last equality uses the fact that $\sum_{i}w_{ij}=\sum_{j}w_{ij}=1$. By \eqref{equ:f2}, we have 
\$
d^2(x_{i,t+1},y_{i,t+1})&=\gamma^2\norm{\sum_{j=1}^nw_{ij}\Log_{y_{i,t+1}}y_{j,t+1}}^2\\
&\leq \gamma^2\sum_{j=1}^nw_{ij}\sum_{j=1}^nw_{ij}\norm{\Log_{y_{i,t+1}}y_{j,t+1}}^2\\
&=\gamma^2\sum_{j=1}^nw_{ij}d^2(y_{i,t+1},y_{j,t+1}),
\$
where the first inequality uses the Cauchy-Schwarz inequality and the second equality follows from the observation that  $\sum_{j}w_{ij}=1$. 
Substituting this into  \eqref{equ:f6}, we obtain that 
\$
\sum_{i=1}^nd^2(\bar y_{t+1},x_{i,t+1})\leq \sum_{i=1}^nd^2(\bar y_{t+1},y_{i,t+1})- (\gamma-C_2\gamma^2)\sum_{i,j=1}^nw_{ij}d^2(y_{i,t+1},y_{j,t+1}).
\$
Suppose $\gamma= 1/(2C_3)\leq 1$ where $C_3=\zeta(\kappa,2D)$. Then we have
\#\label{equ:f7}
\sum_{i=1}^nd^2(\bar y_{t+1},x_{i,t+1})\leq \sum_{i=1}^nd^2(\bar y_{t+1},y_{i,t+1})- \frac{1}{4C_3}\sum_{i,j=1}^nw_{ij}d^2(y_{i,t+1},y_{j,t+1}).
\#
Applying Lemma \ref{lma:a1} to the geodesic triangle with vertices $y_{i,t+1},y_{j,t+1},\bar y_{t+1}$, we have
\$
&d^2(y_{i,t+1},y_{j,t+1})\\
\geq\ &d^2(\bar y_{t+1},y_{i,t+1})+d^2(\bar y_{t+1},y_{j,t+1})-2\inner{\Log_{\bar y_{t+1}}y_{i,t+1}}{\Log_{\bar y_{t+1}}y_{j,t+1}}.
\$
Multiplying this with $w_{ij}$ and summing over $i,j$, we obtain that
\#
&\sum_{i,j=1}^nw_{ij}d^2(y_{i,t+1},y_{j,t+1})\notag\\
\geq\ &2\sum_{i=1}^nd^2(\bar y_{t+1},y_{i,t+1})-2\sum_{i,j=1}^nw_{ij}\inner{\Log_{\bar y_{t+1}}y_{i,t+1}}{\Log_{\bar y_{t+1}}y_{j,t+1}},\label{equ:f8}
\#
where we use the condition that $\sum_iw_{ij}=\sum_jw_{ij}=1$. Since $\bar y_{t+1}$ is the \Frechet mean of $\{y_{i,t+1}\}$, we have $\sum_{i}\Log_{\bar y_{t+1}}y_{i,t+1}=0.$
Thus, it follows that
\$
&\sum_{i,j=1}^nw_{ij}\inner{\Log_{\bar y_{t+1}}y_{i,t+1}}{\Log_{\bar y_{t+1}}y_{j,t+1}}\\
= &\sum_{i,j=1}^n(w_{ij}-1/n)\inner{\Log_{\bar y_{t+1}}y_{i,t+1}}{\Log_{\bar y_{t+1}}y_{j,t+1}}.
\$
Using the singular value analysis in the proof of Lemma \ref{lma:3}, we can show that
\$
\sum_{i,j=1}^n(w_{ij}-1/n)\inner{\Log_{\bar y_{t+1}}y_{i,t+1}}{\Log_{\bar y_{t+1}}y_{j,t+1}}\leq \sigma_2(W)\sum_{i=1}^nd^2(\bar y_{t+1},y_{i,t+1}),
\$
where $\sigma_2(W)=\sigma_1(W-1/n)$ is the second largest singular value of $W$. Substituting this inequality into \eqref{equ:f8}, we obtain that
\$
\sum_{i,j=1}^nw_{ij}d^2(y_{i,t+1},y_{j,t+1})\geq2(1-\sigma_2(W))\sum_{i=1}^nd^2(\bar y_{t+1},y_{i,t+1}).
\$
Combining this with \eqref{equ:f7}, we obtain that 
\$
\sum_{i=1}^nd^2(\bar y_{t+1},x_{i,t+1})\leq \left(1-\frac{1-\sigma_2(W)}{2C_3}\right)\sum_{i=1}^nd^2(\bar y_{t+1},y_{i,t+1}).
\$
Using the definition of $\bar x_{t+1}$, we have that
\$
\sum_{i=1}^nd^2(\bar x_{t+1},x_{i,t+1})\leq \sum_{i=1}^nd^2(\bar y_{t+1},x_{i,t+1})\leq \left(1-\frac{1-\sigma_2(W)}{2C_3}\right)\sum_{i=1}^nd^2(\bar y_{t+1},y_{i,t+1}),
\$
which concludes the proof.

\section{Proof of Lemma \ref{lma:4.2}}

The proof of this lemma bears an resemblance to that of Lemma \ref{lma:4}. Let $\VF(\{x_{i,t}\})$ denote the \Frechet variance of $\{x_{i,t}\}_{i=1}^n$. Then we have $\VF(\{x_{i,1}\})=0$ as we initialize $x_{i,1}=x_1$ at the same point. Define
\$
z_{i,t+1}=\Exp_{x_{i,t}}(-\eta\nabla_{i,t}),\quad \textnormal{where }\nabla_{i,t}=\nabla f_{i,t}(x_{i,t}).
\$
Then $y_{i,t+1}=\cP_{\cX}(z_{i,t+1})$. Let $\bar y_{t+1}$ and $\bar z_{t+1}$ be the \Frechet mean of the points $\{y_{i,t+1}\}$ and $\{z_{i,t+1}\}$, respectively. Then it holds that
\$
n\VF(\{y_{i,t+1}\})=\sum_{i=1}^nd^2(\bar y_{t+1},y_{i,t+1})\leq \sum_{i=1}^nd^2(\cP_{\cX}(\bar z_{t+1}),y_{i,t+1})\leq \sum_{i=1}^nd^2(\bar z_{t+1}, z_{i,t+1}),
\$
where the first inequality uses  definition of $\bar y_{t+1}$ and the second inequality uses Lemma \ref{lma:1}. Then using definition of $\bar z_{t+1}$, we have
\$
\sum_{i=1}^nd^2(\bar z_{t+1}, z_{i,t+1})\leq \sum_{i=1}^nd^2(\bar x_{t},z_{i,t+1}),
\$
where $\bar x_t$ is the \Frechet mean of $\{x_{i,t}\}$. Using the $L$-Lipschitz property of $f_{i,t}$, we can show that
\$
d(z_{i,t+1},x_{i,t})=\eta\norm{\nabla_{i,t}}\leq \eta L.
\$
Then using the triangle inequality, we can show that 
\$
\sqrt{\sum_{i=1}^nd^2(\bar x_{t},z_{i,t+1})}&\leq \sqrt{\sum_{i=1}^nd^2(\bar x_t,x_{i,t})}+\sqrt{\sum_{i=1}^nd^2(x_{i,t},z_{i,t+1})}\\
&\leq \sqrt{n\VF(\{x_{i,t}\})}+\eta\sqrt{n}L.
\$
Combining the above inequalities, we have
\#\label{equ:lma4-proof-1}
\sqrt{\VF(\{y_{i,t+1}\})}\leq \sqrt{\VF(\{x_{i,t}\})}+\eta L.
\#
Applying Lemma \ref{lma:4.1}, we have 
\#
\sqrt{\VF(\{x_{i,t+1}\})}&\leq \sqrt{1-\frac{1-\sigma_2(W)}{2C_3}}\sqrt{\VF(\{y_{i,t+1}\})}\notag\\
&\leq \left(1-\frac{1-\sigma_2(W)}{4C_3}\right)\sqrt{\VF(\{y_{i,t+1}\})},\label{equ:lma4-proof-2}
\#
where $C_3=\zeta(\kappa,2D)$ with $D$ being the diameter of $\cX$ and the second inequality uses the fact $\sqrt{1-x}\leq 1-x/2$ for any $x\in[0,1]$. Combining \eqref{equ:lma4-proof-1} with \eqref{equ:lma4-proof-2}, we have
\$
\sqrt{\VF(\{x_{i,t+1}\})}\leq \left(1-\frac{1-\sigma_2(W)}{4C_3}\right)\sqrt{\VF(\{x_{i,t}\})}+\eta L.
\$
Then using an induction argument, we can easily show that 
\$
\sqrt{\VF(\{x_{i,t}\})}\leq \frac{4C_3\eta L}{1-\sigma_2(W)}
\$
for all $t$. This concludes the proof by observing that $d(x_{i,t},\bar x_t)\leq \sqrt{n\VF(\{x_{i,t}\})}$.

\section{Proof of Lemma \ref{lma:4.3}}

The proof of this lemma is similar to that of Lemma \ref{lma:5}. First, we adopt the same decomposition
\#
d^2(u_t,x_{i,t})-d^2(u_t,y_{i,t+1})=\ &d^2(u_t,x_{i,t})-d^2(u_{t+1},x_{i,t+1})\notag\\
+\ &d^2(u_{t+1},x_{i,t+1})-d^2(u_{t},x_{i,t+1})\notag\\
+\ &d^2(u_{t},x_{i,t+1})-d^2(u_t,y_{i,t+1}).\label{equ:i1}
\#
For the second term, we use the triangle inequality to obtain that
\#\label{equ:i2}
d^2(u_{t+1},x_{i,t+1})-d^2(u_t,x_{i,t+1})\leq 4Dd(u_{t+1},u_t),
\#
where we use the observations that $d(x_{i,t+1},u_{t})\leq 2D$ and $d(x_{i,t+1},u_{t+1})\leq 2D$ with $D$ being the diameter of $\cX$. For the third term, we adopt a different proof. Instead of using the Jensen inequality, we apply Lemma \ref{lma:2} to the geodesic triangle with vertices $x_{i,t+1},y_{i,t+1},u_t$ and obtain that
\#
&d^2(u_t,x_{i,t+1})-d^2(u_t,y_{i,t+1})\notag\\
\leq\ &\zeta(\kappa,d(u_t,x_{i,t+1}))\cdot d^2(x_{i,t+1},y_{i,t+1})-2\inner{\Log_{y_{i,t+1}}x_{i,t+1}}{\Log_{y_{i,t+1}}u_t}\notag\\
\leq\ & \zeta(\kappa,2D)\cdot d^2(x_{i,t+1},y_{i,t+1})-2\inner{\Log_{y_{i,t+1}}x_{i,t+1}}{\Log_{y_{i,t+1}}u_t},\label{equ:i3}
\#
where $\zeta(\kappa,c)$ is defined in Lemma \ref{lma:2} and the second inequality uses the observation that $d(x_{i,t+1},u_t)\leq 2D$. Using the definition of $x_{i,t+1}$, we have that
\$
2\inner{\Log_{y_{i,t+1}}x_{i,t+1}}{\Log_{y_{i,t+1}}u_t}=\gamma\sum_{j}w_{ij}\cdot 2\inner{\Log_{y_{i,t+1}}y_{j,t+1}}{\Log_{y_{i,t+1}}u_t}.
\$
Applying Lemma \ref{lma:a1} to the geodesic triangles with vertices $y_{i,t+1},y_{j,t+1},u_t$, we have
\$
&2\inner{\Log_{y_{i,t+1}}x_{i,t+1}}{\Log_{y_{i,t+1}}u_t}\\
\geq\ & \gamma\sum_{j}w_{ij}\left(d^2(u_t,y_{i,t+1})+d^2(y_{i,t+1},y_{j,t+1})-d^2(u_{t},y_{j,t+1})\right).
\$
Summing this over $i$, we obtain that
\#
&2\sum_{i=1}^n\inner{\Log_{y_{i,t+1}}x_{i,t+1}}{\Log_{y_{i,t+1}}u_t}\notag\\
\geq\ & \gamma\sum_{i,j=1}^nw_{ij}\left(d^2(u_t,y_{i,t+1})+d^2(y_{i,t+1},y_{j,t+1})-d^2(u_{t},y_{j,t+1})\right)\notag\\
=\ &\gamma\sum_{i,j=1}^nw_{ij}d^2(y_{i,t+1},y_{j,t+1}),\label{equ:i4}
\#
where we use the fact that $\sum_{i}w_{ij}=\sum_jw_{ij}=1$. 
By the definition of $x_{i,t+1}$, we have
\$
d^2(x_{i,t+1},y_{i,t+1})=\gamma^2\norm{\sum_{j=1}^nw_{ij}\Log_{y_{i,t+1}}y_{j,t+1}}^2.
\$
Using the Cauchy-Schwarz inequality and the fact that $\sum_jw_{ij}=1$, we can show that
\$
d^2(x_{i,t+1},y_{i,t+1})\leq \gamma^2\sum_{j=1}^nw_{ij}d^2(y_{i,t+1},y_{j,t+1}).
\$
Summing this over $i$ and combining it with \eqref{equ:i3} and \eqref{equ:i4}, we obtain that
\#
&\sum_{i=1}^n\left(d^2(u_t,x_{i,t+1})-d^2(u_t,y_{i,t+1})\right)\notag\\
\leq\ &(\gamma^2\zeta(\kappa,2D)-\gamma)\sum_{i,j=1}^nw_{ij}d^2(y_{i,t+1},y_{j,t+1})\leq 0\label{equ:i6}
\#
where we use the condition that $\gamma=1/(2C_3)$ with $C_3=\zeta(\kappa,2D)$. Then by combining \eqref{equ:i1}, \eqref{equ:i2}, and \eqref{equ:i6} and summing over $i$ and $t$, we obtain that
\$
&\frac{1}{n}\sum_{i=1}^n\sum_{t=1}^T\left(d^2(u_t,x_{i,t})-d^2(u_t,y_{i,t+1})\right) \\
\leq\ &\frac{1}{n}\sum_{i=1}^nd^2(u_1,x_{i,1})+4DP_T\\
\leq\ &D^2+4DP_T,
\$
where $P_T=\sum_{t=1}^Td(u_{t+1},u_t)$ and the second inequality uses the fact that $d(u_1,x_{i,1})\leq D$. This concludes the proof.

\section{Proof of Theorem \ref{thm:2}}

The proof of this theorem is analogous to that of Theorem \ref{thm:1}. By the same argument in the proof of Theorem \ref{thm:1}, we have
\$
f_t(x_{i,t})-f_t(u_t)\leq \frac{8C_3\eta\sqrt{n}L^2}{1-\sigma_2(W)}+\frac{1}{n}\sum_{i=1}^n\left(f_{i,t}(x_{i,t})-f_{i,t}(u_t)\right),
\$
where we use Lemma \ref{lma:4.2} and $C_3,L,\sigma_2(W)$ are defined in Lemma \ref{lma:4.2}. Then we sum this over $i$ and $t$ and obtain that 
\#\label{equ:thm-2-1}
\textnormal{D-Regret}(\{x_{i,t}\})\leq \frac{8C_3\eta\sqrt{n}L^2T}{1-\sigma_2(W)}+\frac{1}{n}\sum_{i=1}^n\sum_{t=1}^T\left(f_{i,t}(x_{i,t})-f_{i,t}(u_t)\right).
\#
By the geodesic convexity of $f_{i,t}$, we have
\#
    f_{i,t}(x_{i,t})-f_{i,t}(u_t)&\leq \inner{-\nabla_{i,t}}{\Log_{x_{i,t}}u_{t}} =\frac{1}{\eta}\inner{\Log_{x_{i,t}}z_{i,t+1}}{\Log_{x_{i,t}}u_{t}},\label{equ:thm-2-2}
    \#
    where $\nabla_{i,t}=\nabla f_{i,t}(x_{i,t})$ and  $z_{i,t+1}=\Exp_{x_{i,t}}(-\eta\nabla_{i,t}).$ Then applying Lemma \ref{lma:2} to the geodesic triangle with vertices $x_{i,t},z_{i,t+1},u_t$, we have
    \$
    &2\inner{\Log_{x_{i,t}}z_{i,t+1}}{\Log_{x_{i,t}}u_{t}}\\
    \leq\ &\zeta(\kappa,d(z_{i,t+1},u_t))\cdot d^2(x_{i,t},z_{i,t+1})+d^2(x_{i,t},u_t)-d^2(z_{i,t+1},u_t),
    \$
    where $\zeta(\kappa,c)$ is defined in Lemma \ref{lma:2} and $\kappa<0$ is the lower sectional curvature bound of $\cM$. By the triangle inequality and the $L$-Lipschitz property of $f_{i,t}$, we can show that $d(z_{i,t+1},u_t)\leq 2D+L$ and $d^2(x_{i,t},z_{i,t+1})\leq \eta^2L^2$, where we use $\eta\leq 1$ in the first inequality. Here $D$ denotes the diameter of $\cX$. Then we have
    \#\label{equ:thm-2-3}
     2\inner{\Log_{x_{i,t}}z_{i,t+1}}{\Log_{x_{i,t}}u_{t}} 
    \leq \eta^2\zeta(\kappa,2D+L)L^2+d^2(x_{i,t},u_t)-d^2(y_{i,t+1},u_t),
    \#
    where we use the observation that $d(y_{i,t+1},u_t)\leq d(z_{i,t+1},u_t)$ due to Lemma \ref{lma:1}. By combining \eqref{equ:thm-2-2} and \eqref{equ:thm-2-3} and summing over $i$ and $t$, we have
    \$
    &\frac{2}{n}\sum_{i=1}^n\sum_{t=1}^T\left(f_{i,t}(x_{i,t})-f_{i,t}(u_t)\right)\\
    \leq\ &\eta\zeta(\kappa,2D+L)L^2T+\frac{1}{\eta n}\sum_{i=1}^n\sum_{t=1}^T\left(d^2(x_{i,t},u_t)-d^2(y_{i,t+1},u_t)\right)\\
    \leq\ &\eta\zeta(\kappa,2D+L)L^2T+\frac{1}{\eta}(D^2+4DP_T),
    \$
    where the second inequality uses Lemma \ref{lma:4.3} and $P_T=\sum_{t=1}^Td(u_{t+1},u_t)$. Substituting this into \eqref{equ:thm-2-1}, we shall conclude the proof. 

\begin{figure}
    \centering
    \includegraphics[width=0.5\linewidth]{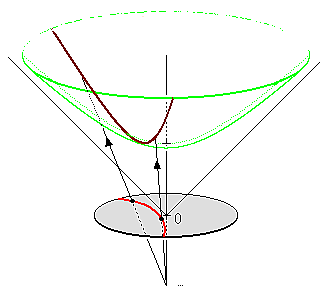}
    \caption{Projection from the hyperboloid to the \Poincare disk through the fixed point $(-1,0,0)$. The figure source is \href{https://en.wikipedia.org/wiki/Hyperboloid_model}{Wikipedia}.}
    \label{fig:a1}
\end{figure}

\section{Geometry}

\subsection{Hyperbolic spaces}\label{sec:a6}
This section provides a concise introduction to hyperbolic spaces. For a more comprehensive treatment of this topic, readers are directed to \cite{benedetti1992lectures}. Among various models to represent hyperbolic spaces, our experiments  primarily use two models: the hyperboloid model and the \Poincare disk model.

{\bf Hyperboloid model.}
 In the hyperboloid model, a hyperbolic space is modelled as the unit imaginary sphere in the Minkowski space. The Minkowski space is $\RR^{m+1}$ endowed with the Minkowski bilinear form 
    \$
    \left<x,y\right>_{M}=\sum_{i=1}^mx_iy_i-x_{0}y_{0}.
    \$
    The hyperboloid is the unit imaginary sphere given by
    \$
    \HH^m=\{x\in\RR^{m+1}\mid \inner{x}{x}_{M}=-1,x_{0}>0\}. 
    \$
    The tangent space to $\HH^m$ at $x$ is 
    \$
    T_{x}\HH^m=\{v\in\RR^{m+1}\mid \inner{x}{v}_{M}=0\}.
    \$
    The Minkowski bilinear form restricted to $T_{x}\HH^m$ is an inner product. This gives the Riemannian metric of $\HH^m$. Given $x\in\HH^m$ and $v\in T_x\HH^m$, the exponential map is
    \$
    \Exp_{x}(v)=\cosh(\norm{v})x+\sinh(\norm{v})v/v. 
    \$
    Based on this one can easily solve the logarithm map.
    
{\bf \Poincare disk model.}
 For $\HH^2$, we use the \Poincare disk model for visualization. It is a disk of unit radius $\{(t_1,t_2)\in\RR^2\mid t_1^2+t_2^2<1 \}$. It can be obtained by projecting the hyperboloid onto the disk at $t=0$ through the fixed point $(-1,0,0)$, as shown in Figure \ref{fig:a1}. Notably, the geodesics are either the diameter of the disk or Euclidean circles contained within the disk that are orthogonal to the boundary of the disk.

\subsection{Symmetric Positive Definite (SPD) Matrices}

The space of SPD matrices is a Hadamard manifold when endowed with the affine-invariant metric \cite{chen2024riemannian}. For $X\in{\rm SPD}(m)$, the tangent space to ${\rm SPD}(m)$ at $X$ is the space ${\rm Sym}(m)$ of all symmetric matrices in $\RR^{m\times m}$. For any $X,Y\in{\rm SPD}(m)$ and $V\in{\rm Sym}(m)$, the exponential map, logarithm map, and pairwise distance are given by \cite{moakher2005differential}
\$
\Exp_X(V)&=X^{1/2}\exp(X^{-1/2}VX^{-1/2})X^{1/2},\\
\Log_X(Y)&=X^{-1/2}\log(X^{1/2}YX^{1/2})X^{-1/2},\\
d(X,Y)&=\norm{\log(X^{-1/2}YX^{-1/2})}_s{\rF},
\$
where $\exp(\cdot)$ and $\log(\cdot)$ are the matrix exponential and matrix logarithm, respectively.

\bibliographystyle{unsrt}
\bibliography{references}

\end{document}